\newtheorem{theorem}{Theorem}[section]
\newtheorem{proposition}[theorem]{Proposition}
\newtheorem{lemma}[theorem]{Lemma}
\newtheorem{corollary}[theorem]{Corollary}
\newtheorem{remark}[theorem]{Remark}
\newtheorem{question}[theorem]{Question}
\theoremstyle{definition}
\newtheorem{definition}[theorem]{Definition}
\newtheorem{claim}[theorem]{Claim}
\newcommand{\eps}{\varepsilon}
\tikzset{
    partial ellipse/.style args={#1:#2:#3}{
        insert path={+ (#1:#3) arc (#1:#2:#3)}
    }
}
\begin{document}

\title[A nonexistence result for wing-like mean curvature flows]{A nonexistence result for wing-like mean curvature flows in $\mathbb{R}^4$}

\author{Kyeongsu Choi, Robert Haslhofer, Or Hershkovits}

\begin{abstract}
Some of the most worrisome potential singularity models for the mean curvature flow of $3$-dimensional hypersurfaces in $\mathbb{R}^4$ are  noncollapsed wing-like flows, i.e. noncollapsed flows that are asymptotic to a wedge. In this paper, we rule out this potential scenario, not just among self-similarly translating singularity models, but in fact among all ancient noncollapsed flows in $\mathbb{R}^4$.
Specifically, we prove that for any ancient noncollapsed mean curvature flow $M_t=\partial K_t$ in $\mathbb{R}^4$ the blowdown $\lim_{\lambda\to 0} \lambda\cdot {K_{t_0}}$ is always a point, halfline, line, halfplane, plane or hyperplane, but never a wedge. In our proof we introduce a fine bubble-sheet analysis, which generalizes the fine neck analysis that has played a major role in many recent papers. Our result is also a key first step towards the classification of ancient noncollapsed flows in $\mathbb{R}^4$, which we will address in a series of subsequent papers.
\end{abstract}

\maketitle

\tableofcontents

\section{Introduction}

The mean curvature flow of mean-convex surfaces in $\mathbb{R}^3$, and also of $2$-convex hypersurfaces in $\mathbb{R}^{n+1}$, is by now well understood.  In particular, White proved that the evolving surfaces are smooth away from a small set of times \cite{White_size,White_nature}, and Huisken-Sinestrari \cite{HuiskenSinestrari_surgery}, Brendle-Huisken \cite{BH_surgery} and Haslhofer-Kleiner \cite{HaslhoferKleiner_surgery} constructed a flow with surgery. More recently, in significant work by Angenent-Daskalopoulos-Sesum \cite{ADS,ADS2} and Brendle-Choi \cite{BC,BC2} a complete classification of all possible singularity models for such flows has been obtained (these classification results have in turn be generalized in our recent proof of the mean-convex neighborhood conjecture \cite{CHH,CHHW}).\\

Some important results are of course valid in arbitrary dimensions. In particular, tangent flows, i.e. blowup limits centered at some fixed space-time point, are always self-similarly shrinking thanks to Huisken's monotonicity formula \cite{Huisken_monotonicity}. By fundamental results of Huisken \cite{Huisken_shrinker} and Colding-Minicozzi \cite{CM_generic}  the only mean-convex shrinkers (and also the only stable shrinkers) are round spheres and cylinders. This of course is an important ingredient in the above quoted regularity theory for mean-convex flows due to White. More recently, Colding-Minicozzi proved uniqueness of cylindrical tangent flows \cite{CM_uniqueness}, and applied this to obtain refined information about the structure of the singular set in flows that only encounter stable singularities \cite{CM_singular_set}. 
On the other hand, general blowup limits, i.e. blowup limits along arbitrary sequences of space-time points, are much less understood already for the flow of $3$-dimensional hypersurfaces in $\mathbb{R}^4$. In particular, one of the most worrisome potential scenarios is that one encounters so-called wing-like flows as blowup limits, i.e. flows that are asymptotic to a wedge.\\

To explain the background, let us first discuss the situation of $2$-dimensional flows in $\mathbb{R}^3$, and in fact let us restrict the discussion even further to the case of strictly convex, graphical, self-similarly translating solutions. The most well known such solution is the translating bowl soliton \cite{AltschulerWu}, which is given by a rotationally symmetric entire graph. However, in addition to the bowl there is also a one-parameter family of solutions defined over strip regions $(-b,b)\times\mathbb{R}$ for every $b>\pi/2$. These solutions have been constructed by Ilmanen (unpublished) and Wang \cite{Wang_convex}, and have been classified in recent work by Hoffman-Ilmanen-Martin-White \cite{HIMW}, building also on important prior work by Spruck-Xiao \cite{SpruckXiao}. Ilmanen called them $\Delta$-wings, capturing their shape. They are asymptotic to a wedge with its sides modelled on the grim-reaper times $\mathbb{R}$.\\

The $2$-dimensional $\Delta$-wings do not cause any deep concerns for the singularity analysis of mean curvature flow. This is because they are collapsed. In fact, it is known that collapsed solutions cannot arise as blowup limit of any mean-convex flow \cite{White_size,White_nature} (see also \cite{ShengWang,Andrews_noncollapsing,HaslhoferKleiner_meanconvex}), and conjectured that they cannot arise as blowup limit of any embedded flow \cite{Ilmanen_monotonicity}. We recall that a mean-convex flow is called noncollapsed, if there is some $\alpha>0$ such that every point admits interior and exterior balls of radius at least $\alpha/H(p)$.\\

However, the situation changes dramatically when one moves one dimension higher. This is because the grim-reaper is collapsed, but the bowl is noncollaped. Hence, one has to worry about the potential scenario that a blowup limit could be a wing-like translator with its sides modelled on the $2$-dimensional bowl times $\mathbb{R}$. This is illustrated in Figure \ref{figure_wing}. More generally, this lead to the question:

\begin{figure}
\begin{tikzpicture}[scale=0.8,x=1cm,y=1cm] \clip(-8,8) rectangle (8,-1);
\draw [samples=100,rotate around={45:(0,0)},xshift=0cm,yshift=0cm,domain=0.1:10)] plot (\x,1/\x);
\draw (2.2,4) [rotate around={-45:(3.7,4)}, partial ellipse=-90:0:1.5cm and 0.3cm];
\draw (2.2,4) [dashed, rotate around={-45:(3.7,4)}, partial ellipse=0:90:1.5cm and 0.3cm];
\draw (-2.2,4) [rotate around={45:(-3.7,4)}, partial ellipse=180:270:1.5cm and 0.3cm];
\draw (-2.2,4) [dashed, rotate around={45:(-3.7,4)}, partial ellipse=90:180:1.5cm and 0.3cm];
\draw (3,5) [rotate around={-45:(4.8,5)}, partial ellipse=-90:0:1.8cm and 0.3cm];
\draw (3,5) [dashed, rotate around={-45:(4.8,5)}, partial ellipse=0:90:1.8cm and 0.3cm];
\draw (-3,5) [rotate around={45:(-4.8,5)}, partial ellipse=180:270:1.8cm and 0.3cm];
\draw (-3,5) [dashed, rotate around={45:(-4.8,5)}, partial ellipse=90:180:1.8cm and 0.3cm];
\draw (1.4,3) [rotate around={-45:(2.6,3)}, partial ellipse=-90:0:1.2cm and 0.3cm];
\draw (1.4,3) [dashed, rotate around={-45:(2.6,3)}, partial ellipse=0:90:1.2cm and 0.3cm];
\draw (-1.4,3) [rotate around={45:(-2.6,3)}, partial ellipse=180:270:1.2cm and 0.3cm];
\draw (-1.4,3) [dashed, rotate around={45:(-2.6,3)}, partial ellipse=90:180:1.2cm and 0.3cm];
\draw (4,5.8) [blue, rotate around={45:(4,5.8)}, partial ellipse=180:360:0.27cm and 0.1cm];
\draw (4,5.8) [blue, dashed,  rotate around={45:(4,5.8)}, partial ellipse=0:180:0.27cm and 0.1cm];
\draw (-4,5.8) [blue, rotate around={-45:(-4,5.8)}, partial ellipse=180:360:0.27cm and 0.1cm];
\draw (-4,5.8) [blue, dashed, rotate around={-45:(-4,5.8)}, partial ellipse=0:180:0.27cm and 0.1cm];
\draw (3,4.7) [blue, rotate around={45:(3,4.7)}, partial ellipse=180:360:0.27cm and 0.1cm];
\draw (3,4.7) [blue, dashed, rotate around={45:(3,4.7)}, partial ellipse=0:180:0.27cm and 0.1cm];
\draw (-3,4.7) [blue, rotate around={-45:(-3,4.7)}, partial ellipse=180:360:0.27cm and 0.1cm];
\draw (-3,4.7) [blue, dashed, rotate around={-45:(-3,4.7)}, partial ellipse=0:180:0.27cm and 0.1cm];
\draw (2,3.6) [blue, rotate around={45:(2,3.6)}, partial ellipse=180:360:0.27cm and 0.1cm];
\draw (2,3.6) [blue, dashed, rotate around={45:(2,3.6)}, partial ellipse=0:180:0.27cm and 0.1cm];
\draw (-2,3.6) [blue, rotate around={-45:(-2,3.6)}, partial ellipse=180:360:0.27cm and 0.1cm];
\draw (-2,3.6) [blue, dashed, rotate around={-45:(-2,3.6)}, partial ellipse=0:180:0.27cm and 0.1cm];
\node at (0,5) {\Huge ?};
\draw (1,1)--(7,7) [red, dashed];
\draw (-1,1)--(-7,7) [red, dashed];
\node [red] at (3.5,2.5) {\small asymptotic};
\node [red] at (3.5,2.2) {\small lines};
\end{tikzpicture}
\caption{One of the most worrisome potential singularity models would be a three-dimensional noncollapsed wing-like flow in $\mathbb{R}^4$ with its sides modelled on the $2$-dimensional bowl times $\mathbb{R}$.}\label{figure_wing}
\end{figure}
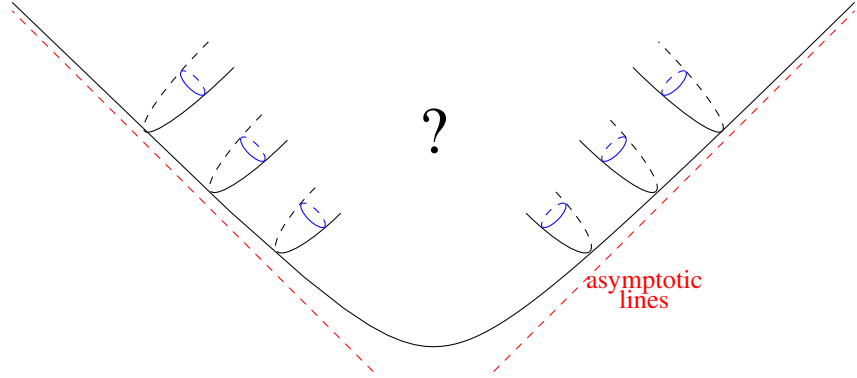

\begin{question}[noncollapsed wing-like translators]\label{question1}
Do there exist any noncollapsed wing-like translators in $\mathbb{R}^4$?
\end{question}

Here, with wing-like we mean any solution that is asymptotic to a wedge, i.e. a $2$-dimensional convex cone with angle strictly less than $\pi$.  Such solutions would cause a major complication for understanding the flow through singularities in $\mathbb{R}^4$, and would also be a major obstacle for the construction of a flow with surgery. More generally, one also has to worry about blowup limits that are not necessarily self-similar:

\begin{question}[noncollapsed wing-like ancient flows]\label{question2}
Do there exist any noncollapsed wing-like ancient flows in $\mathbb{R}^4$?
\end{question}

\bigskip

\subsection{Main results}

To address Question \ref{question1} and Question \ref{question2} (and even to properly define the notion of wing-like), we study the blowdown of the time slices. To this end, let $M_t=\partial K_t\subset\mathbb{R}^4$ be a noncollapsed ancient solution of the mean curvature flow. We recall that such flows are always convex thanks to \cite[Theorem 1.10]{HaslhoferKleiner_meanconvex}.

\begin{definition}[blowdown]\label{def_blowdown}
Given any time $t_0$, the \emph{blowdown} of $K_{t_0}$ is defined by
\begin{equation}
\check{K}_{t_0}:=\lim_{\lambda\to 0} \lambda\cdot K_{t_0}.
\end{equation}
\end{definition}

For example, if the flow is a three-dimensional bowl soliton, then its blowdown is a halfline. On the other hand, if the flow is a $2$-dimensional bowl soliton times $\mathbb{R}$, then its blowdown is a 2-dimensional halfplane. The scenario of a wing-like flow would correspond to the case where the blowdown is a wedge, i.e. a $2$-dimensional convex cone with angle strictly less than $\pi$.\\

Our main theorem provides a complete classification of all possible blowdowns of noncollapsed flows in $\mathbb{R}^4$:

\begin{theorem}[blowdown of noncollapsed flows in $\mathbb{R}^4$]\label{main_thm}
Let $M_t=\partial K_t$ be an ancient noncollapsed mean curvature flow in $\mathbb{R}^4$. Then its blowdown is time-independent and is
\begin{itemize}
\item either a point (which only happens if the solution is compact),
\item or a halfline,
\item or a line (which only happens for $S^2\times\mathbb{R}$ or oval times $\mathbb{R}$),
\item or a $2$-dimensional halfplane (which only happens if the solution is a $2$-dimensional bowl soliton times $\mathbb{R}$),
\item or a $2$-dimensional plane (which only happens for $S^1\times\mathbb{R}^2$),
\item or a $3$-dimensional hyperplane (which only happens for flat $\mathbb{R}^3$).
\end{itemize}
In particular, the blowdown can never be a wedge.
\end{theorem}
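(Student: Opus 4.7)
The starting observation is that by \cite[Theorem~1.10]{HaslhoferKleiner_meanconvex} any such $K_{t_0}$ is convex, hence the blowdown $\check K_{t_0}$ is a convex cone in $\mathbb{R}^4$. My plan is to organize the argument by the dimension $d$ of this cone. The dimensions $d\in\{0,1,3,4\}$ should be handled by comparatively short arguments: $d=0$ forces compactness; $d=4$ is excluded because Huisken's monotonicity formula would produce a full-dimensional tangent flow at spatial infinity, contradicting noncollapsing; $d=3$ forces the tangent flow at infinity to be a static hyperplane, so $\check K_{t_0}$ is itself a hyperplane and $M_t$ is flat $\mathbb{R}^3$; and $d=1$ produces a halfline or a line, with the latter case reducing to the classification of solutions splitting off a line of translations. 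The content therefore concentrates in the case $d=2$, where $\check K_{t_0}$ lives in a $2$-plane $P\subset\mathbb{R}^4$ and is either a full plane, a halfplane, or a wedge of opening angle strictly less than $\pi$. In the first two cases, I expect one can identify $M_t$ with $S^1\times\mathbb{R}^2$ and with a $2$-dimensional bowl times $\mathbb{R}$ respectively, by combining the asymptotic splitting with the $\mathbb{R}^3$-classification results of Angenent--Daskalopoulos--Sesum \cite{ADS,ADS2} and Brendle--Choi \cite{BC,BC2} applied slicewise.

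The essential difficulty is therefore to rule out the wedge case, which I would tackle via a \emph{fine bubble-sheet analysis}. Informally, fix a ray $\gamma\subset P$ in the interior of the would-be wedge and follow the flow along points of $M_{t_0}$ whose direction tends to $\gamma$. Parabolic rescaling about such points, combined with convexity, noncollapsing, and the two-dimensional splitting of the blowdown, should force every subsequential limit to be a self-similarly shrinking bubble-sheet $\mathbb{R}^2\times S^1\subset\mathbb{R}^4$. On sufficiently large neighbourhoods of the ``sides'' of the would-be wedge, the rescaled flow is thus $C^\infty$-close to this bubble-sheet over a long backward time interval, and I can write the difference as a graph $u$ over it. Linearising, I would decompose $u$ into eigenmodes of the Ornstein--Uhlenbeck-type operator on $\mathbb{R}^2\times S^1$, whose eigenfunctions are products of Hermite polynomials in the two axis variables with Fourier modes on the $S^1$ factor.

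The new feature compared with the fine neck analysis of \cite{ADS,BC} is that the axis is now two-dimensional, so the unstable quadratic modes are parameterised by a symmetric $2\times 2$ matrix $Q$ and the neutral translation kernel is also two-dimensional. The wedge geometry should manifest itself in $Q$ being \emph{strictly} positive definite, with eigenvalues controlling the two distinct asymptotic slopes of the sides. The final step in the plan is to propagate this leading-order data along the axis toward the tip and derive a contradiction: a smooth noncollapsed tip compatible with a two-dimensional splitting of the blowdown should model on a $2$-dimensional bowl times $\mathbb{R}$, which forces one eigenvalue of $Q$ to vanish and hence the blowdown to collapse to at most a halfplane. I expect the principal obstacle to be the fine bubble-sheet analysis itself: unlike the scalar situation for necks, one must track a full $2\times 2$ matrix-valued leading coefficient, handle the coupling between the two translational directions and the rotations in $P^\perp$, and run a matrix-valued version of the Merle--Zaag ODE dichotomy to pin down the leading-order behaviour as the rescaled time tends to $-\infty$.
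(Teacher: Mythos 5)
Your high-level architecture matches the paper: dimension-reduce the blowdown, dispatch $d\leq 1$ and $d\geq 3$ and the line-containing case quickly, then isolate the wedge case and attack it by a ``fine bubble-sheet analysis'' built on a spectral decomposition of the linearized operator over $\mathbb{R}^2\times S^1$ and a Merle--Zaag dichotomy. But the step where the actual content lies---ruling out the wedge---has a real gap, and it is rooted in a misidentification of the mode structure.

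You call the quadratic modes (the ones parameterised by a symmetric $2\times 2$ matrix $Q$, spanned by $x_1^2-2,\,x_2^2-2,\,x_1x_2$) ``unstable'' and say the ``neutral translation kernel'' is $2$-dimensional. In fact, on $\Gamma=\mathbb{R}^2\times S^1(\sqrt 2)$ the Ornstein--Uhlenbeck operator $\mathcal L=\partial_{x_1}^2+\partial_{x_2}^2+\tfrac12\partial_\theta^2-\tfrac12 x_1\partial_{x_1}-\tfrac12 x_2\partial_{x_2}+1$ annihilates all three quadratics (they are \emph{neutral}), and its unstable eigenfunctions are $1,\cos\theta,\sin\theta,x_1,x_2$ (eigenvalues $1$ and $\tfrac12$); the neutral space is $7$-dimensional, containing the four rotations $x_i\cos\theta,\,x_i\sin\theta$ in addition to the quadratics. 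This matters because the Merle--Zaag alternative then forks the argument into two genuinely different cases---neutral-mode-dominant and unstable-mode-dominant---and the two cases are closed by entirely different mechanisms that your sketch does not supply: in the neutral-dominant case one needs the quadratics to be the leading order (after fine-tuning away the rotations) and to relate the algebra of $Q$ to the geometry of the blowdown; in the unstable-dominant case the leading term is \emph{linear}, $e^{\tau/2}(a_1x_1+a_2x_2)$, and the contradiction comes from the fact that $(a_1,a_2)$ is a universal constant of the flow while the two sides of a wedge would force two different translation directions for the limiting $\mathbb{R}\times(\text{2d bowl})$.

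Your proposed wedge-exclusion mechanism also has the sign and the inclusion backwards. You argue that the wedge should force $Q$ to be strictly positive definite and then that regularity at the tip should force an eigenvalue of $Q$ to vanish. In fact Brunn's concavity principle forces $Q$ to be \emph{semi-negative} definite, and the key geometric relation is $\check K\subseteq\ker Q$: directions outside $\ker Q$ experience inward quadratic bending and so cannot lie in the blowdown. A $2$-dimensional wedge would then make $\ker Q$ two-dimensional and hence $Q=0$, contradicting the nontriviality of the leading neutral mode. The argument thus rules out the wedge without ever needing to analyse a ``tip'' or propagate data along the axis. So the logical shape of your plan is off at exactly the crux, even though the surrounding framework (convexity, blowdown as a convex cone of dimension at most $2$, splitting off $\mathbb{R}$ when a line is present, bubble-sheet graphical coordinates, Merle--Zaag) is the right one.

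A minor further point: the paper obtains the dimension bound $\dim\check K\leq 2$ for noncompact nonflat flows directly from the fact that the tangent flow at $-\infty$ is, after Colding--Minicozzi uniqueness, either $\mathbb{R}\times S^2$ or $\mathbb{R}^2\times S^1$; the cases $d=3,4$ (for nonflat flows) never arise and do not need the separate spatial-infinity tangent-flow arguments you suggest.
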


Theorem \ref{main_thm} shows that the asymptotic structure of noncollapsed flows, i.e. the flows that are actually relevant for singularity analysis, is much more rigid than the asymptotic structure of arbitrary convex flows. In fact, the example of the $\Delta$-wings illustrates that even for $2$-dimensional translators in $\mathbb{R}^3$ one can get the wedge of any angle less than $\pi$ as blowdown. In $\mathbb{R}^4$ there is a zoo of collapsed ancient convex solutions, see e.g. \cite{Wang_convex,HIMW}, whose blowdown can be much more arbitrary than the ones in Theorem \ref{main_thm}.\\

As an immediate corollary, we can answer Question \ref{question2}. Namely, since the blowdown can never be a wedge, we can rule out the  potential scenario of noncollapsed wing-like ancient flows in $\mathbb{R}^4$:

\begin{corollary}[Non-existence of wing-like noncollapsed flows]\label{cor1}
Wing-like noncollapsed ancient flows in $\mathbb{R}^4$ do not exist.
\end{corollary}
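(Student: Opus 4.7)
My plan is to derive this corollary as a direct consequence of Theorem \ref{main_thm}. The strategy has two steps: first, translate the hypothesis that the flow is wing-like into a concrete assertion about the blowdown $\check{K}_{t_0}$ from Definition \ref{def_blowdown}; second, inspect the list of possible blowdowns given by Theorem \ref{main_thm} and verify that none of them is a wedge.

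For the first step, I want to show that a noncollapsed ancient flow $M_t=\partial K_t$ in $\mathbb{R}^4$ is wing-like, i.e.\ asymptotic to a wedge (a $2$-dimensional convex cone of opening angle strictly less than $\pi$), if and only if its blowdown $\check{K}_{t_0}=\lim_{\lambda\to 0}\lambda\cdot K_{t_0}$ is such a wedge. The ``only if'' direction should be a direct unravelling of the definitions, since being asymptotic to a wedge at infinity means precisely that rescalings by $\lambda\to 0$ converge (in Hausdorff sense on compact sets) to the wedge. The ``if'' direction is also immediate: if the blowdown is the wedge $W$, then in any fixed compact region $\lambda\cdot K_{t_0}$ is close to $W$, which by rescaling back means $K_{t_0}$ lies within sublinear distance of $W$ far from the origin, so it is asymptotic to $W$. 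Convexity of $K_{t_0}$ (guaranteed by \cite[Theorem 1.10]{HaslhoferKleiner_meanconvex}) ensures the blowdown is well defined and convex.

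For the second step, I would assume toward contradiction that a wing-like noncollapsed ancient flow exists. By the first step, its blowdown is a $2$-dimensional convex cone of opening angle strictly less than $\pi$. But Theorem \ref{main_thm} forces this blowdown to belong to the list: point, halfline, line, $2$-dimensional halfplane, $2$-dimensional plane, or $3$-dimensional hyperplane. The only $2$-dimensional candidates are the halfplane, whose opening angle is exactly $\pi$, and the full plane, which is not a proper convex cone in the wedge sense. Neither qualifies as a wedge with angle strictly less than $\pi$, so the assumption is untenable.

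The genuine obstacle in this story is of course Theorem \ref{main_thm} itself, whose proof requires the fine bubble-sheet analysis announced in the abstract and constitutes the main body of the paper. Once that classification of possible blowdowns is in hand, ruling out the wing-like scenario is a matter of inspecting the list; matching the definitional notion of ``asymptotic to a wedge'' with the blowdown is the only genuine thing to verify, and it is essentially tautological.
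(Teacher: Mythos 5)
Your argument is correct and is exactly the paper's intended route: the paper treats Corollary \ref{cor1} as an immediate consequence of Theorem \ref{main_thm}, since ``wing-like'' is defined precisely as having a wedge (a $2$-dimensional convex cone of angle strictly less than $\pi$) as its asymptotic shape, and no wedge appears on the list of possible blowdowns. The identification of ``asymptotic to a wedge'' with ``blowdown equals a wedge'' that you spell out is indeed the only definitional point to check, and your reasoning there is sound.
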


In particular, this also answers Question \ref{question1}:

\begin{corollary}[Non-existence of wing-like noncollapsed translators]\label{cor2}
Wing-like noncollapsed translators in $\mathbb{R}^4$ do not exist.
\end{corollary}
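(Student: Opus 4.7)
The plan is to obtain Corollary \ref{cor2} as an immediate specialization of Corollary \ref{cor1}. First I would note that any mean curvature flow translator $M_t = M_0 + tv$, with translation velocity $v \in \mathbb{R}^4$, is by construction ancient, being defined on all of $(-\infty, \infty)$. Hence, under the noncollapsing hypothesis, it meets all the assumptions of Corollary \ref{cor1}.

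Next I would verify that the property of being \emph{wing-like} — that is, asymptotic to a two-dimensional convex cone of angle strictly less than $\pi$ — is faithfully recorded by the blowdown from Definition \ref{def_blowdown}. For a translator we have $K_t = K_0 + tv$, so
\begin{equation}
\lim_{\lambda \to 0} \lambda \cdot K_t = \lim_{\lambda \to 0} \bigl(\lambda K_0 + \lambda t v\bigr) = \check{K}_0,
\end{equation}
showing that the blowdown is independent of $t$ and equals the asymptotic cone of any (hence every) time slice. In particular, the translator is wing-like exactly when $\check K_{t_0}$ is a wedge.

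Applying Corollary \ref{cor1}, the blowdown of any ancient noncollapsed flow in $\mathbb{R}^4$ is never a wedge, so a wing-like noncollapsed translator cannot exist. There is essentially no new obstacle to overcome at this stage: all of the substance — the fine bubble-sheet analysis classifying blowdowns — is contained in the proof of Theorem \ref{main_thm}, and Corollary \ref{cor2} is simply the restriction of Corollary \ref{cor1} to the self-similarly translating subclass.
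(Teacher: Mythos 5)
Your proposal is correct and matches the paper's approach: the paper likewise presents Corollary \ref{cor2} as an immediate specialization of Corollary \ref{cor1}, with no additional argument needed since translators are in particular ancient noncollapsed flows. The short verification that the blowdown of a translator agrees with its asymptotic cone at every time is a reasonable sanity check, though the paper already establishes time-independence of the blowdown in general.
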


Corollary \ref{cor1} and Corollary \ref{cor2} rule out some of the most worrisome potential singularity models for the mean curvature flow of $3$-dimensional mean-convex hypersurfaces in $\mathbb{R}^4$ (and more generally also for mean-convex flows in $4$-manifolds, since after blowup the ambient manifold always becomes Euclidean).

\bigskip
\subsection{Outline and further results}

Let us now outline our approach. Let $M_t=\partial K_t$ be an ancient noncollapsed mean curvature flow in $\mathbb{R}^4$. We can assume that the solution is noncompact and nonflat, as otherwise the blowdown would simply be a point or a $3$-dimensional hyperplane, respectively.\\

As we explain in Section \ref{sec_coarse}, it follows from the general theory of noncollapsed flows \cite{HaslhoferKleiner_meanconvex} that the blowdown
\begin{equation}
\check{K}:=\lim_{\lambda\to 0} \lambda\cdot K_{t_0}.
\end{equation}
always is a convex cone of dimension at most $2$ (also, this limit is in fact independent of $t_0$). If $\check K$ splits off a line, then so does the flow $M_t=\partial K_t$, and we are done by the classification of $2$-dimensional noncollapsed flows in $\mathbb{R}^3$ by  Brendle-Choi \cite{BC} and Angenent-Daskalopoulos-Sesum \cite{ADS2}.  After these reductions, the task is thus to rule out the scenario where $\check K$ is wedge.\\
We next consider the tangent flow at $-\infty$. Flows with a \emph{neck-tangent flow at $-\infty$}, i.e. with
\begin{equation}\label{neck_tangent}
\lim_{\lambda \rightarrow 0} \lambda M_{\lambda^{-2}t}=\mathbb{R}\times S^2(\sqrt{-4t}),
\end{equation}
have already been classified by \cite{BC2}. We can thus assume that we have a \emph{bubble-sheet tangent flow at $-\infty$}, i.e. that
\begin{equation}\label{bubble_tangent_intro}
\lim_{\lambda \rightarrow 0} \lambda M_{\lambda^{-2}t}=\mathbb{R}^2\times S^1(\sqrt{-2t}).
\end{equation}

In Section \ref{sec_fol_barriers}, to facilitate various calibration and barrier arguments, we build a foliation for flows close to a bubble sheet in $\mathbb{R}^4$. We do this by shifting and rotating the $2$-dimensional shrinker foliation in $\mathbb{R}^3$ from \cite{ADS}.\\

In Section \ref{Sec_set_up}, we set up the fine bubble-sheet analysis, which generalizes the fine neck-analysis that played a major role in \cite{ADS,ADS2,BC,BC2,CHH,CHHW}. Given any space-time point $X_0$, we consider the renormalized flow
\begin{equation}
\bar M_\tau = e^{\frac{\tau}{2}} \, \left( M_{t_0-e^{-\tau}} - x_0\right).
\end{equation}
Then, the hypersurfaces $\bar M_\tau$ converge for $\tau\to -\infty$ to the cylinder
\begin{equation}
\Gamma=\mathbb{R}^2\times S^1(\sqrt{2}).
\end{equation}
In particular, $\bar M_\tau$ can be written as the graph of a function $u=u_{X_0}(\cdot,\tau)$ with small norm over $\Gamma\cap B_{2\rho(\tau)}$, where $\rho(\tau)\to \infty$ as $\tau\to -\infty$. The goal is then to derive a sharp asymptotic expansion for the graph function $u$.\\
The analysis over $\Gamma$ is governed by the Ornstein-Uhlenbeck operator
\begin{equation}
\mathcal L=\frac{\partial^2}{\partial x_1^2} +\frac{\partial^2}{\partial x_2^2} + \frac{1}{2} \, \frac{\partial^2}{\partial \theta^2}  - \frac{1}{2} \, x_1 \, \frac{\partial}{\partial x_1} - \frac{1}{2} \, x_2 \, \frac{\partial}{\partial x_2} + 1.\end{equation}
The operator $\mathcal{L}$ has $5$ unstable eigenfunctions, namely
\begin{equation}\label{eigenfunctions_unstable}
1,\cos\theta,\sin\theta,x_1,x_2,
\end{equation}
and $7$ neutral eigenfunctions, namely
\begin{equation}\label{expansion_neutral}
x_1\cos\theta,x_1\sin\theta,x_2\cos\theta,x_2\sin\theta,x_1^2-2,x_2^2-2,x_1x_2.
\end{equation}
Using the ODE-lemma from Merle-Zaag \cite{MZ}, we show that for $\tau\to -\infty$ either the unstable mode is dominant or the neutral mode is dominant. Furthermore, considering instead an expansion for
\begin{equation}
\tilde{M}_\tau=S(\tau) \bar{M}_\tau,
\end{equation}
where the fine-tuning rotation $S(\tau)\in \mathrm{SO}(4)$ is obtained via the implicit function theorem, we can assume that in the neutral mode case the truncated function $\hat u$ is orthogonal to the $\theta$-dependent functions from \eqref{expansion_neutral}.\\

In Section \ref{sec_neutral_mode}, we consider the case where the neutral mode is dominant. By the reductions from above we can assume that the blowdown $\check{K}$ is a convex cone that does not contain any line. Furthermore, rotating coordinates, we can arrange that  $\check{K}$ contains the positive $x_1$-axis, namely
\begin{equation}
\{ \lambda e_1\, |\, \lambda \geq 0\} \subseteq \check{K}.
\end{equation}
In this setting, we have:
\begin{theorem}[blowdown in neutral mode]\label{thm_blowdown_neutral_intro}
If the neutral mode is dominant, then its blowdown is a halfline, namely
\begin{equation}
\check{K}=\{ \lambda e_1\, |\, \lambda \geq 0\} .
\end{equation}
Moreover,
\begin{equation}
\lim_{\tau \to -\infty}\frac{\hat u(\cdot,\tau)}{\|\hat u(\cdot,\tau)\|}= -c (x_2^2-2),
\end{equation}
where $c>0$.
\end{theorem}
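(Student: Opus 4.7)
The plan is to combine the Merle--Zaag ODE lemma on the three-dimensional neutral eigenspace with a coupling between the fine asymptotic expansion and the large-scale blowdown. By the Merle--Zaag dichotomy and the fact that the rotation $S(\tau)$ has eliminated the $\theta$-dependent neutral modes from $\hat u$, in the neutral-mode-dominant case $\hat u(\cdot,\tau)$ asymptotically lies in the span of $\{x_1^2-2,\, x_2^2-2,\, x_1 x_2\}$. I would parameterize this projection by a symmetric $2\times 2$ matrix $A(\tau)=(A_{ij}(\tau))$ via
\[
\hat u(x_1, x_2, \theta, \tau) = A_{11}(\tau)(x_1^2-2) + 2 A_{12}(\tau)\, x_1 x_2 + A_{22}(\tau)(x_2^2-2) + o\bigl(\|\hat u(\cdot,\tau)\|\bigr)
\]
in the appropriate weighted $L^2$ norm on $\Gamma \cap B_{2\rho(\tau)}$.

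Next, I would derive an ODE for $A(\tau)$. Since the eigenvalue $0$ makes the linear part inert, the evolution is driven by the quadratic nonlinearity of the graphical PDE for $u$. The ADS-type computation, now in the matrix setting, should yield
\[
A'(\tau) = -2\, A(\tau)^2 + \text{higher order terms},
\]
from which $|A(\tau)|\sim 1/|\tau|$ and $A(\tau)/|A(\tau)|$ stabilizes to a symmetric matrix $A_\infty$. Because the flow of $A'=-2A^2$ preserves eigenvectors, $A(\tau)$ may be diagonalized in a single fixed frame, and convexity of $K_t$ forces $A_\infty \leq 0$.

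The rank of $A_\infty$ is then the key discrete invariant. Rank $0$ is ruled out because it contradicts the assumption that the neutral mode is dominant. Rank $2$ would mean that for large rescaled $|x_1|$ and $|x_2|$ the surface deviates quadratically inward from the cylinder in both axial directions, which via the shrinker-foliation barriers built in Section \ref{sec_fol_barriers} forces $K_{t_0}$ to be bounded in the full $(x_1,x_2)$-plane and contradicts the standing assumption $\{\lambda e_1\,|\,\lambda\geq 0\}\subseteq \check K$. Hence $A_\infty$ has rank exactly $1$, and its kernel must contain $e_1$; so $A_\infty = -\hat c\, e_2\otimes e_2$ with $\hat c > 0$, which translates back to $\hat u(\cdot,\tau)/\|\hat u(\cdot,\tau)\|\to -c(x_2^2-2)$ with $c > 0$. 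Moreover, closure in the $x_2$-direction together with extension in the $x_1$-direction, combined with the fact that $\check K$ is a convex cone containing no line (the splitting case having already been disposed of in the earlier reductions), yields $\check K = \{\lambda e_1 \,|\, \lambda \geq 0\}$.

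The main obstacle I anticipate is the step linking $A_\infty$ to the actual large-scale geometry of $K_{t_0}$: the fine graphical expansion is only valid on $\Gamma\cap B_{2\rho(\tau)}$ with $\rho(\tau)\to\infty$ slowly, so quadratic inward deviation in the graphical region must be propagated out to genuine asymptotic distances. The natural tool is the shrinker foliation of Section \ref{sec_fol_barriers} together with a tip-region analysis, which should show that $-c(x_2^2-2)$-behavior forces closure of $K_{t_0}$ in the $x_2$-direction while leaving the $e_1$-direction unbounded. A secondary difficulty is controlling the error term in the ODE for $A(\tau)$, which must be stronger than $o(|A|^2)$ in order to conclude stabilization of the direction $A(\tau)/|A(\tau)|$; this will likely combine the smallness of $u$ on the graphical region with parabolic interior estimates and a careful accounting of the truncation tails.
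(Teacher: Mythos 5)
Your high-level outline (reduce to a symmetric $2\times2$ matrix $A(\tau)$, argue its rank and kernel, translate back to $\check K$) matches the paper, but the technical engine you propose is genuinely different and carries gaps that the paper's actual argument avoids.

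The paper never derives an ODE of the form $A'=-2A^2$. Instead, it uses compactness: along a subsequence $\tau_{i_m}\to-\infty$, $\hat u/\|\hat u\|_{\mathcal H}\to q_{11}(x_1^2-2)+2q_{12}x_1x_2+q_{22}(x_2^2-2)$ for some nontrivial $Q$. The sign $Q\leq 0$ is then obtained not from an unproved appeal to ``convexity of $K_t$'' but by a concrete application of Brunn's concavity principle: the cross-sectional area $\mathcal A(x_1,x_2,\tau)=\tfrac12\int_0^{2\pi}(\sqrt 2+u)^2\,d\theta$ has the property that $\sqrt{\mathcal A}$ is concave in $(x_1,x_2)$, and comparing integrals of $\mathcal Q$ over $[-1,1]^2$ versus $[-3,3]\times[-1,1]$ in the limit forces $q_{11}\leq 0$ (and similarly $q_{22}\leq 0$ after diagonalizing). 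You would need to supply this argument; ``$A_\infty\leq 0$'' is exactly as nontrivial as what it's supposed to prove.

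The more serious divergence is how the rank/kernel structure gets tied to the blowdown. You propose propagating the quadratic inward bending to large scales via the shrinker foliation barriers of Section~\ref{sec_fol_barriers}, and you correctly flag this as a gap. The paper sidesteps the issue entirely: the crucial lemma is $\check K\subseteq\ker Q$, proved again via Brunn's concavity \emph{along rays}. For any $v\notin\ker Q$ one shows that the integral $\int_0^1\int_d^{d+1}\mathcal Q(rv+sw)\,dr\,ds$ is eventually negative, so there is a point where $\sqrt{\mathcal A}$ starts decreasing in the $v$-direction; since $r\mapsto\sqrt{\mathcal A(rv_m+s_mw_m,\tau_m)}$ is concave globally (this uses only convexity of $K_{t_0}$, not any graphical estimate), the area must hit zero at finite distance, contradicting $v\in\check K$. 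Because concavity of $\sqrt{\mathcal A}$ holds on all of $K_{t_0}$, there is no truncation/tail problem to worry about. This also cleanly handles your ``kernel must contain $e_1$'' step, which you leave unexplained: since by arrangement $e_1\in\check K$, the inclusion $\check K\subseteq\ker Q$ immediately gives $e_1\in\ker Q$.

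Finally, once $\check K\subseteq\ker Q$ is in hand with $\check K$ nontrivial (it contains the $x_1$-halfline) and $Q\neq 0$ and $Q\leq 0$, the dimension count forces $\ker Q=\mathbb{R}e_1$ and $\check K=\{\lambda e_1:\lambda\geq 0\}$, and since a normalized semi-negative $2\times 2$ matrix is determined by its 1-dimensional kernel, subsequential convergence upgrades to full convergence. Your proposed ODE would give finer information (decay rates of $\|\hat u\|$) that is not needed here and would require error estimates sharper than $o(|A|^2)$ which you have not produced; that is where the main unfilled gap lies if you insist on this route. The paper's Brunn-concavity argument is both shorter and robust exactly where your proposal is weakest.
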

To prove this, remembering \eqref{expansion_neutral} and the fine-tuning, we first show that along a subsequence we have
\begin{equation}
\lim_{\tau_{i_m} \to -\infty}\frac{\hat u(\cdot,\tau_{i_m})}{\|\hat u(\cdot,\tau_{i_m})\|}= q_{11}(x_1^2-2)+ q_{22}(x_2^2-2)+2q_{12}x_1x_2.
\end{equation}
Using Brunn's concavity principle, we show that $Q=\{q_{ij}\}$ is a nontrivial semi-negative definite $2\times2$-matrix.\\
The crucial step is then to relate the algebra of the quadratic form $Q$ with the geometry of the blowdown $\check K$. Specifically, we show that
\begin{equation}\label{intro_key_inclusion}
\check K \subseteq \ker Q.
\end{equation}
The basic idea for this is that in directions $v\notin \ker Q$ we have an inwards quadratic bending, which implies that $v$ is a ``short" direction, i.e. $v\notin \check K$. This can be made precise using again Brunn's concavity principle. Once \eqref{intro_key_inclusion} is established, Theorem \ref{thm_blowdown_neutral_intro} follows easily. In fact, our argument also gives a quantitative estimate for the diameter of the short directions:

\begin{corollary}[diameter of level sets]\label{diameter bound rough_intro}
If $M_t=\partial K_t$ is as above, then for any $\delta>0$ we have
\begin{equation*}
\bar M_\tau\cap \{x_1=0\} \subseteq B_{e^{\delta|\tau|}}(0)
\end{equation*}
for $\tau \ll 0$.
\end{corollary}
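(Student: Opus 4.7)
The plan is to extract a quantitative diameter bound from the same Brunn concavity argument that proves the containment \eqref{intro_key_inclusion} in the proof of Theorem \ref{thm_blowdown_neutral_intro}. The key input is the fine expansion
\[
\hat u(\cdot, \tau)/\|\hat u(\cdot, \tau)\| \;\longrightarrow\; -c(x_2^2 - 2), \qquad c>0,
\]
valid over $\Gamma \cap B_{2\rho(\tau)}$. Evaluating at $x_1 = 0$ and $|x_2| = R$ with $R$ moderately larger than $\sqrt 2$ and $R \leq 2\rho(\tau)$, this forces the $S^1$-cross-section of $\bar M_\tau$ to have radius at most $\sqrt 2 - \tfrac{c}{2}\|\hat u(\cdot,\tau)\|(R^2-2)$, that is, a quadratic inward bending in the $x_2$-direction.

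Next I would apply Brunn's concavity principle to the $3$-dimensional convex body $\bar K_\tau \cap \{x_1 = 0\} \subset \mathbb{R}^3$: the function $A(R)^{1/2}$, where $A(R) = \mathrm{Area}(\bar K_\tau \cap \{x_1 = 0, x_2 = R\})$, is concave in $R$ on its support. The previous bound translates into an upper bound on $A(R)^{1/2}$ throughout the valid range $R \leq 2\rho(\tau)$, and concavity, extrapolated via the tangent line at $R = \rho(\tau)$, forces $A$ to reach $0$ at some cutoff
\[
R^*(\tau) \;\lesssim\; \rho(\tau) \,+\, \frac{1}{\|\hat u(\cdot,\tau)\|\,\rho(\tau)}.
\]
Together with the trivial $O(1)$ bound on the $(x_3,x_4)$-extent coming from the closeness of $\bar M_\tau$ to $\Gamma$, this gives $\bar M_\tau \cap \{x_1 = 0\} \subseteq B_{CR^*(\tau)}(0)$.

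Finally, I would invoke the Merle--Zaag ODE analysis carried out in the neutral-mode case, which implies that $\|\hat u(\cdot,\tau)\|^{-1}$ grows at most polynomially in $|\tau|$; the truncation scale $\rho(\tau)$ can likewise be taken polynomial in $|\tau|$. Hence $R^*(\tau) \leq C|\tau|^N$, which is below $e^{\delta|\tau|}$ for any fixed $\delta > 0$ once $|\tau|$ is large enough. The principal obstacle is the extrapolation step: the graph expansion for $\hat u$ is a priori controlled only up to $|x_2| \leq 2\rho(\tau)$, which may be much smaller than $R^*(\tau)$; Brunn's principle combined with the convexity of $\bar K_\tau$ is precisely what lets us propagate the inward bending beyond the range of direct validity of the expansion.
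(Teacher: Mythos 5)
Your strategy — combine the fine expansion $\hat u/\|\hat u\|_{\mathcal H}\to -c(x_2^2-2)$ with Brunn's concavity of $\sqrt{\mathcal A}$ to force the slice areas to vanish at a controlled distance, and then use the slow decay of $\|\hat u\|_{\mathcal H}$ — is exactly the paper's route. But there are two places where your write-up either over-claims or glosses over a needed step.

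First, the closing step claims that the Merle--Zaag analysis gives $\|\hat u(\cdot,\tau)\|_{\mathcal H}^{-1}$ growing at most \emph{polynomially} in $|\tau|$, and that $\rho(\tau)$ can be taken polynomial. Neither is available. What the neutral-mode ODE analysis yields (Lemma~\ref{neutral mode lower bound}) is only the sub-exponential lower bound $\|\hat u(\cdot,\tau)\|_{\mathcal H}\geq e^{\delta\tau}$ for every $\delta>0$; nothing forces a power-law rate. Likewise $\rho$ is just a universal graphical radius with $\rho\to\infty$ and $-\rho\leq\rho'\leq 0$, with no polynomial control asserted. Fortunately the corollary only asks for $e^{\delta|\tau|}$, so the sub-exponential bound is exactly sufficient — but you should not tangent Brunn's inequality at $R=\rho(\tau)$, precisely to avoid having to control $\rho$. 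The paper instead differences $\sqrt{\mathcal A}$ between two \emph{fixed} unit windows (near $x_2\in(0,1)$ and $x_2\in(10,11)$), obtaining a pointwise slope $\geq c\,\|\hat u\|_{\mathcal H}\geq ce^{\delta\tau}$, and concavity then kills $\mathcal A$ at distance $O(e^{-\delta\tau})$. This removes all $\rho$-dependence.

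Second, you "evaluate" the expansion at $x_1=0$, $|x_2|=R$. The convergence to $-c(x_2^2-2)$ is in the weighted $L^2$ norm $\mathcal H$, not pointwise, and at $|x_2|\sim\rho(\tau)$ the Gaussian weight is vanishingly small, so pointwise control there is simply not available from that statement. The paper sidesteps this by integrating $\sqrt{\mathcal A}$ over fixed unit boxes — the $\mathcal H$-convergence legitimately converts to an inequality between these box integrals (see the formula for $\mathcal A$ in \eqref{Area definition}) — and then invokes the mean value theorem to produce a single base point $(a,b)\in(-1,1)\times(0,1)$ at which the difference quotient estimate holds. You need this averaging step (or a separate interior gradient estimate) to make your pointwise statement rigorous.

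With these two repairs your argument becomes the paper's.
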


In Section \ref{sec_unstable}, we consider the case where the unstable mode is dominant. Our main technical result for such flows is the following:

\begin{theorem}[Fine bubble-sheet theorem] \label{thm_fine_bubble_sheet_intro} Let $\{M_t\}$ be an ancient noncollapsed flow in $\mathbb{R}^4$, with a bubble sheet tangent flow at $-\infty$, whose unstable mode is dominant. Then
there exist universal constants $ a_1,a_2$, such that for every $X$, after suitable re-centering in the $x_3x_4$-plane, the truncated graph function $\check u^X(\cdot,\tau)$ of the renormalized flow $\bar{M}_\tau^X$ satisfies
\begin{align}
 \check u^X=e^{\tfrac{\tau}{2}}\left(a_1 x_1 +a_2x_2 \right)+ o(e^{\tfrac{\tau}{2}})
\end{align}
for $\tau\ll 0$ depending only on an upper bound on the bubble-sheet scale $Z(X)$. Moreover, the expansion parameters satisfy
\begin{equation}
|a_1|+|a_2| >0.
\end{equation}
\end{theorem}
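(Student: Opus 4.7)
\emph{Proof proposal.} The plan is to run a Merle--Zaag-style analysis on the truncated graph function $\hat u^X$ for the renormalized flow $\bar M_\tau^X$ over the bubble-sheet cylinder $\Gamma=\mathbb{R}^2\times S^1(\sqrt 2)$, then remove the two $\theta$-dependent unstable modes by a re-centering in the $x_3x_4$-plane, and finally observe that the remaining constant unstable mode decays strictly faster than the $x_1,x_2$ modes. Decompose $\hat u^X=\pi_+\hat u^X+\pi_0\hat u^X+\pi_-\hat u^X$ using the unstable span $\mathrm{span}\{1,\cos\theta,\sin\theta,x_1,x_2\}$ (with $\mathcal L$-eigenvalues $1$ and $\tfrac12$), the neutral span \eqref{expansion_neutral} (eigenvalue $0$), and the stable complement. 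The evolution equation for $\hat u^X$ together with the Merle--Zaag ODE lemma \cite{MZ} and the hypothesis that the unstable mode is dominant yields
\begin{equation*}
\|\pi_0\hat u^X(\cdot,\tau)\|+\|\pi_-\hat u^X(\cdot,\tau)\|=o\!\left(\|\pi_+\hat u^X(\cdot,\tau)\|\right)\qquad\text{as }\tau\to-\infty.
\end{equation*}

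Writing $\pi_+\hat u^X=\alpha(\tau)+\beta_1(\tau)\cos\theta+\beta_2(\tau)\sin\theta+\gamma_1(\tau) x_1+\gamma_2(\tau) x_2$, the component ODEs $\alpha'=\alpha+E_\alpha$, $\beta_i'=\tfrac12\beta_i+E_{\beta_i}$, $\gamma_i'=\tfrac12\gamma_i+E_{\gamma_i}$, with nonlinear-plus-cutoff errors controlled by the estimates of Section \ref{Sec_set_up}, integrate backwards from $-\infty$ to give $\alpha(\tau)=O(e^\tau)$, $\beta_i(\tau)=\beta_i^\infty e^{\tau/2}+o(e^{\tau/2})$ and $\gamma_i(\tau)=\gamma_i^\infty e^{\tau/2}+o(e^{\tau/2})$. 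Since $\cos\theta$ and $\sin\theta$ are exactly the infinitesimal generators of translations of $\Gamma$ in the $x_3x_4$-plane, an implicit function theorem argument parallel to the fine-tuning rotation construction of Section \ref{Sec_set_up} lets us choose an $X$-dependent translation $w(X)\in\{0\}^2\times\mathbb{R}^2$ so that, after re-centering, the new coefficients $\beta_i^\infty$ vanish. Setting $a_i:=\gamma_i^\infty$ and combining $\alpha=O(e^\tau)=o(e^{\tau/2})$ with the Merle--Zaag decay on $\pi_0$ and $\pi_-$ yields
\begin{equation*}
\check u^X(\cdot,\tau)=e^{\tau/2}(a_1 x_1+a_2 x_2)+o(e^{\tau/2}).
\end{equation*}

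For the nontriviality $|a_1|+|a_2|>0$, if both $\gamma_i^\infty$ vanished then after re-centering $\pi_+\check u^X=O(e^\tau)$, and the dominance hypothesis would force $\check u^X=O(e^\tau)$ overall; but then every neutral mode coefficient must decay like $e^\tau$ as well, which combined with the $\mathcal L$-evolution forces these coefficients to be identically zero, contradicting the assumption that we are in the unstable-dominant regime at leading order. The main obstacle I expect is the uniformity and universality: showing that a \emph{single} pair of constants $a_1,a_2$ works for \emph{every} $X$, with the approach threshold depending only on an upper bound for the bubble-sheet scale $Z(X)$. This requires establishing a translation-covariance property of $(\gamma_1^\infty,\gamma_2^\infty)$ along the flow---effectively that the limit linear profile is a global invariant of $\{M_t\}$---and sharpening all error estimates in the Merle--Zaag step and in the implicit function theorem for $w(X)$ so they depend only on $Z(X)$ and not on $X$ itself.
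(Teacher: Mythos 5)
Your proposal captures the overall scheme of the paper (spectral decomposition over the cylinder, Merle--Zaag dichotomy, re-centering to kill the $\theta$-dependent unstable modes), but it has several genuine gaps, most seriously in the nonvanishing argument and in the transition from the tilted to the untilted flow.

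First, the dominance hypothesis \eqref{plus_dom_tilt} is stated for the \emph{tilted} flow $\tilde M^{X_0}_\tau$ around a single point $X_0$, while the theorem concerns the \emph{untilted} graph function $\check u^X$ for \emph{every} $X$. You run the Merle--Zaag argument directly on $\hat u^X$ without establishing that the unstable mode remains dominant after removing the fine-tuning rotation and after changing the center point. The paper devotes a separate proposition to this: it first shows that tilted dominance forces the exponential decay $U\leq Ce^{9\tau/10}$ and $|S^{X_0}(\tau)-I|\leq Ce^{9\tau/20}$, so the untilted $\check U\leq Ce^{9\tau/20}$, which rules out neutral-mode dominance (since the neutral mode would decay slower than any exponential); then one observes that any center $X$ merges with $X_0$ as $\tau\to-\infty$. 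Without this step your ODE analysis has no hypothesis to start from.

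Second, to make the integrated ODE errors $o(e^{\tau/2})$ one needs the improved graphical radius $\bar\rho(\tau)=e^{-\tau/9}$, which the paper obtains via a barrier argument with the ADS/KM foliation. Your proposal quotes "estimates of Section~\ref{Sec_set_up}," but those only give $|E|\leq C\rho^{-1}\|\check u\|_{\mathcal H}$ for the \emph{original} $\rho$, which is too weak to conclude that $\alpha(\tau)=O(e^\tau)$ and that the $\gamma_i$ have well-defined limits at rate $e^{\tau/2}$.

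Third, and most seriously, the nonvanishing argument does not work as stated. You claim that if $a_1=a_2=0$ then dominance forces $\check u^X=O(e^\tau)$ and that this "forces the coefficients to be identically zero." But $\check u^X=O(e^\tau)$ is entirely compatible with the unstable-dominant regime: the constant mode $1$ has eigenvalue $1$, so a flow whose unstable part is asymptotically $K_0^{1/2}e^\tau$ with the neutral and stable modes decaying faster is still unstable-dominant, and the flow is not the exact cylinder, so there is no contradiction. The paper's proof is genuinely different: if $\bar a_1^X=\cdots=\bar a_4^X=0$, it first establishes the sharp asymptotic $\check U_1=K_0e^{2\tau}(1+O(e^{\tau/9}))$ with $K_0>0$ (via a secondary Merle--Zaag argument splitting $\mathcal{H}_+$ into $\mathcal{H}_1$ and $\mathcal{H}_{1/2}$); it then shifts the center by time $K$ so that $x_3^2+x_4^2=o(e^\tau)$ for the recentered flow; but since only the time coordinate was shifted, the recentered flow still has $\bar a_i^{X'}=0$ and hence a nonzero $K_0'>0$, which contradicts $x_3^2+x_4^2=o(e^\tau)$. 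This two-step re-centering-in-time argument is the key idea you are missing.

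Finally, the universality of $(a_1,a_2)$, which you flag as an expected obstacle, is resolved in the paper not by an IFT argument or an abstract covariance principle but by explicit coordinate computations comparing $\check u^{X}$ to $\check u^{X'}$ under space and time translations of the center, which directly yield $\bar a_1^X=a_1$, $\bar a_2^X=a_2$, $\bar a_3^X=a_3-x_0^3$, $\bar a_4^X=a_4-x_0^4$. The re-centering in the $x_3x_4$-plane that kills $\cos\theta,\sin\theta$ is then immediate from this covariance formula, not from a separate implicit function theorem argument.
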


Theorem \ref{thm_fine_bubble_sheet_intro} shows that the bubble-sheet increases its bubble-size slightly, in a precisely described way, as one moves in direction of the vector $(a_1,a_2)$. The constants $ a_1,a_2$ are genuine constants of the flow. For example, if $M_t$ is $\mathbb{R}$ times a $2$-dimensional bowl translating in $x_2$-direction, then $a_1=0$ and $a_2$ is proportional to the reciprocal of the speed of translation.\\

The fine-bubble sheet theorem (Theorem \ref{thm_fine_bubble_sheet_intro}) generalizes the fine-neck theorem from our prior work \cite{CHH,CHHW}. To prove it, we follow the scheme from our prior work with the necessary modifications. In particular, we now expand in terms of the unstable eigenfunctions from \eqref{eigenfunctions_unstable}, and we use the new foliation from Section \ref{sec_fol_barriers}. Another new step is to show that the unstable mode is dominant even after removing the fine-tuning rotation.\\

In Section \ref{sec_unstable_end}, we play the following end game. Let $M_t=\partial K_t$ be a noncompact ancient noncollapsed flow in $\mathbb{R}^4$, with bubble-sheet tangent flow at $-\infty$, whose unstable mode is dominant, and suppose towards a contradiction that its blowdown is a wedge. Then taking suitable limits along the sides, we see $\mathbb{R}$ times a $2$-dimensional bowl translating soliton. As observed above, the vector $(a_1,a_2)$ points in the translation direction. However, since the limits obtained along the two different sides of the wedge translate in different directions this gives the desired contradiction. In fact, the argument also works for a degenerate wedge (i.e. a halfline) and thus proves:

\begin{theorem}[classification in unstable mode]\label{ruling_out_unstable_intro}
The only noncompact ancient noncollapsed flow in $\mathbb{R}^4$, with bubble-sheet tangent flow at $-\infty$, whose unstable mode is dominant, is $\mathbb{R}\times2d$-bowl.
\end{theorem}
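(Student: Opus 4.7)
The plan is to implement the endgame outlined at the end of the introduction: derive a contradiction from any candidate wing-like blowdown by combining the fine bubble-sheet theorem (Theorem \ref{thm_fine_bubble_sheet_intro}) with the $2$-dimensional classification of Brendle-Choi \cite{BC2}, and then identify the only remaining case as $\mathbb{R}\times 2d$-bowl.

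\emph{Step 1 (reductions).} By Section \ref{sec_coarse}, the blowdown $\check K$ is a convex cone of dimension at most $2$. If $\check K$ contains a line, then by convexity $K_t$ splits off that line, and the $2$-dimensional quotient is a noncompact ancient noncollapsed flow in $\mathbb{R}^3$ whose tangent flow at $-\infty$ is $\mathbb{R}\times S^1$, inherited from the bubble-sheet tangent of $M_t$. By \cite{BC2} this quotient is either the round shrinking cylinder---giving $M_t=\mathbb{R}^2\times S^1$, a self-shrinker with zero unstable mode, contradicting the hypothesis---or the $2d$-translating bowl, giving exactly $M_t=\mathbb{R}\times 2d$-bowl, as desired. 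I may therefore assume $\check K$ does not contain a line, i.e., $\check K$ is either a halfline or a wedge of angle strictly less than $\pi$.

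\emph{Step 2 (limits along rays).} For any unit vector $v$ in the relative interior of a ray of $\check K$, pick $X_j=(p_j,t_j)$ with $p_j\in\partial K_{t_j}$, $|p_j|\to\infty$, $p_j/|p_j|\to v$, and extract a smooth subsequential limit $M_\infty^v$ of the flows rescaled at the noncollapsed scale around $X_j$. Convexity, together with the fact that an entire halfline in direction $v$ sits asymptotically inside $K_{t_j}$, forces $M_\infty^v$ to split off a line parallel to $v$. The $2$-dimensional quotient is a noncompact ancient noncollapsed flow in $\mathbb{R}^3$ with neck tangent at $-\infty$; by \cite{BC2} it is a round cylinder or a $2d$-bowl. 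The cylinder is excluded because Theorem \ref{thm_fine_bubble_sheet_intro} provides a nontrivial linear mode that survives passage to the limit. Thus $M_\infty^v=\mathbb{R}\times 2d$-bowl, translating along some axis $\tau(v)$ in the $(x_1,x_2)$-plane and orthogonal to $v$.

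\emph{Step 3 (contradiction via fine bubble-sheet).} By Theorem \ref{thm_fine_bubble_sheet_intro}, the coefficient vector $(a_1,a_2)$ is invariant under re-centering, and as noted in the excerpt, for $\mathbb{R}\times 2d$-bowl it is proportional to the translation direction. Passing to the limit $M_\infty^v$ thus forces $\tau(v)\parallel(a_1,a_2)$, and hence $v\perp(a_1,a_2)$, for every interior ray $v$ of $\check K$. For a genuine wedge this is impossible: two linearly independent interior rays of $\check K$ in the $(x_1,x_2)$-plane cannot both be perpendicular to the same nonzero vector $(a_1,a_2)$. The halfline case is handled as the degenerate wedge: the limit $M_\infty^v=\mathbb{R}\times 2d$-bowl translating transversally to $v$ yields enlargement of the convex body in the transverse direction, which contradicts the $1$-dimensionality of the blowdown. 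Either way the assumption of Step $1$ collapses, and $M_t=\mathbb{R}\times 2d$-bowl follows.

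\emph{Main obstacle.} The technical heart of the argument is Step~$2$: producing the smooth limit $M_\infty^v$, verifying its line-splitting, and carrying the fine bubble-sheet coefficient vector $(a_1,a_2)$ continuously through the convergence, so that the cylinder is ruled out as a possible quotient. Noncollapsing and convexity deliver the limit and its splitting; the subtler point is controlling the nontrivial linear mode under smooth subsequential convergence of normalized flows, which is what prevents the degeneration to the self-similar quotient. A secondary technical point is the halfline case in Step~$3$, where upgrading the local translating structure of $M_\infty^v$ to a global enlargement of $\check K$ will require a Brunn-style concavity argument or a careful propagation of the sharp expansion along the flow, analogous to what is used in the neutral-mode analysis of Section \ref{sec_neutral_mode}.
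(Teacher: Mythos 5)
Your outline reproduces the skeleton of the paper's argument (reduce to the case where $\check K$ does not contain a line, translate to infinity along the boundary of $\check K$, identify the limits as $\mathbb{R}\times 2d\text{-bowl}$ via \cite{BC}, and match the fine bubble-sheet coefficients $(a_1,a_2)$ against the translation direction). However, there are two genuine gaps.

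First, in Step~2 you propose rescaling the flow at the ``noncollapsed scale'' around $X_j$ and then carrying the coefficient vector $(a_1,a_2)$ to the limit. But $(a_1,a_2)$ is a \emph{scale-dependent} quantity: after rescaling by a factor $\lambda_j$ the expansion coefficients of the rescaled flow become $\lambda_j^{-1}(a_1,a_2)$. If the rescaling factors do not stay bounded, the coefficients degenerate to $0$ or blow up, and one cannot conclude that the limit $M_\infty^v$ has nonvanishing $(a_1,a_2)$, nor that the cylinder is excluded. The paper instead translates \emph{without} rescaling, which is legitimate only after first establishing the uniform bound $\sup_h Z(x^\pm_h)\le C$ on the bubble-sheet scale (Claim~\ref{cyl_scale_bd}). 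That bound itself requires a nontrivial bootstrap: suppose $Z(x^\pm_{h_i})\to\infty$, rescale, pass to a limit, rule out the cylinder, invoke the Merle-Zaag dichotomy, and derive a contradiction in either branch (the unstable branch via the fact that rescaling sends the expansion parameters to $0$, contradicting Theorem~\ref{non.vanish.coeffs}). You list this as the ``main obstacle'' but do not address it; it is a substantive missing step, not a routine compactness argument.

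Second, your Step~3 treatment of the halfline case does not work. You argue that a bowl limit translating transversally to $v$ would ``enlarge the convex body in the transverse direction, contradicting the $1$-dimensionality of the blowdown.'' But sublinear transverse growth is entirely compatible with a halfline blowdown --- for instance, the $3d$ bowl has halfline blowdown yet its level sets grow like $\sqrt{h}$ in the transverse direction --- so no contradiction follows. More generally, your perpendicularity argument ($v\perp(a_1,a_2)$ for each boundary ray $v$) gives two independent constraints only when $\check K$ is a genuine wedge; for the halfline there is a single ray direction and the constraint is merely $a_1=0$. The paper instead considers both the $x_2$-maximizing and $x_2$-minimizing points $x_h^\pm$ on the level sets of $M_0$: the corresponding bowl limits translate strictly \emph{inward}, giving $\langle v^+,e_2\rangle<0$ and $\langle v^-,e_2\rangle>0$, hence $a_2<0$ and $a_2>0$ simultaneously. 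This sign argument treats the wedge and the halfline uniformly and is what actually closes the case. Relatedly, you speak of ``interior rays'' of $\check K$ in Steps~2 and~3, but the line-splitting in the translated limit occurs along the \emph{boundary} rays of $\check K$ (where the convex body becomes asymptotically thin), not along interior directions.
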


Theorem \ref{ruling_out_unstable_intro}, in addition to ruling out the wedge blowdown, in fact completes the classification of noncompact ancient noncollapsed flows in $\mathbb{R}^4$ in case the unstable mode is dominant. We will address the neutral mode case in subsequent work, based on Theorem \ref{thm_blowdown_neutral_intro} and Corollary \ref{diameter bound rough_intro}.

\bigskip

\noindent\textbf{Acknowledgments.}
KC has been supported by KIAS Individual Grant MG078901 and a POSCO Science Fellowship. RH has been supported by an NSERC Discovery Grant and a Sloan Research Fellowship. OH has been supported by the Koret Foundation early career award and ISF grant 437/20. We are very grateful to Wenkui Du and the anonymous referees for very detailed and helpful comments.\\

\bigskip

\section{Coarse properties of ancient noncollapsed flows}\label{sec_coarse}

Recall that by \cite[Thm. 1.14]{HaslhoferKleiner_meanconvex} and \cite{CM_uniqueness}, if $M_t=\partial K_t$ is an ancient noncollapsed\footnote{By \cite{Brendle_inscribed,HaslhoferKleiner_inscribed} we can always take $\alpha=1$ as noncollapsing constant.} mean curvature flow in $\mathbb{R}^4$, that is noncompact and nonflat, then, up to rotation, either 
\begin{equation}\label{neck_tangent}
\lim_{\lambda \rightarrow 0} \lambda K_{\lambda^{-2}t}=\mathbb{R}\times D^3(\sqrt{-4t}),
\end{equation}
or
\begin{equation}\label{bubble_tangent}
\lim_{\lambda \rightarrow 0} \lambda K_{\lambda^{-2}t}=\mathbb{R}^2\times D^2(\sqrt{-2t}).
\end{equation}
In the case \eqref{neck_tangent} we say that $M_t$ has a \emph{neck tangent flow at $-\infty$}, whereas in case \eqref{bubble_tangent} we say that $M_t$ has a \emph{bubble-sheet tangent flow at $-\infty$}.\\

In the neck tangent case, it follows from Brendle-Choi \cite{BC2} (or alternatively from \cite{CHHW}) that $M_t$ is either a round shrinking $S^2\times\mathbb{R}$ or a $3$-dimensional bowl. Hence, we can focus on the bubble-sheet case. \\

\subsection{The blowdown of time slices}
In this section, we establish several elementary properties of the blowdown of time slices (Definition \ref{def_blowdown}).

\begin{proposition}[basic properties of blowdown]\label{2dcone_blow_up}
For any ancient noncollapsed mean curvature flow $M_t=\partial K_t$ in $\mathbb{R}^4$, that is noncompact and nonflat, the blowdown $\check{K}_{t_0}$ is a convex cone of dimension at most $2$.
\end{proposition}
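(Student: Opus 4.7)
The plan is to prove the two conclusions separately: that $\check K_{t_0}$ is a convex cone, and that its dimension is at most $2$.

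For the cone structure, I would first use that noncollapsedness implies $K_{t_0}$ is convex by \cite[Thm.~1.10]{HaslhoferKleiner_meanconvex}. After a harmless translation ensuring $0 \in K_{t_0}$ (which does not affect the limit, since $\lambda v \to 0$ for any fixed $v$), convexity gives $\mu \lambda K_{t_0} \subseteq \lambda K_{t_0}$ for every $\mu \in (0,1]$, so the family $\{\lambda K_{t_0}\}_{\lambda > 0}$ is nested and decreasing as $\lambda \to 0$. Hence the Hausdorff-on-compacts limit
\begin{equation*}
\check K_{t_0} = \bigcap_{\lambda > 0} \lambda K_{t_0}
\end{equation*}
exists, is convex as an intersection of convex sets, and satisfies $\mu \check K_{t_0} = \check K_{t_0}$ for every $\mu > 0$ (just reindex the intersection), i.e.\ is a convex cone. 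Equivalently, it coincides with the recession cone of $K_{t_0}$.

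For the dimension bound, the key input is the monotonicity of the flow in time: a noncollapsed ancient flow is mean-convex, so $K_{t_0} \subseteq K_t$ for every $t < t_0$, and scaling gives $\lambda K_{t_0} \subseteq \lambda K_t$ for every $\lambda > 0$. For an arbitrary fixed $s < 0$, I would set $t = \lambda^{-2} s$ (so that $t \to -\infty$ as $\lambda \to 0$) to obtain
\begin{equation*}
\lambda K_{t_0} \subseteq \lambda K_{\lambda^{-2} s}.
\end{equation*}
Passing $\lambda \to 0$ and invoking the tangent flow dichotomy \eqref{neck_tangent}--\eqref{bubble_tangent}, the right-hand side converges to $\mathbb{R} \times D^3(\sqrt{-4s})$ in the neck case or $\mathbb{R}^2 \times D^2(\sqrt{-2s})$ in the bubble-sheet case. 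Hence $\check K_{t_0}$ is contained in the corresponding set for each $s < 0$, and letting $s \nearrow 0$ collapses the disk factor to a point, so
\begin{equation*}
\check K_{t_0} \subseteq \mathbb{R} \times \{0\} \quad \text{or} \quad \check K_{t_0} \subseteq \mathbb{R}^2 \times \{0\},
\end{equation*}
respectively. In either case $\dim \check K_{t_0} \le 2$.

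The only real technical point is making sure that the convergence of the rescaled flows $\lambda K_{\lambda^{-2}s}$ provided by the tangent flow assumption is strong enough, namely Hausdorff on compact subsets of $\mathbb{R}^4$, so that the set-theoretic inclusion above passes to the limit. This is the natural notion of convergence for closed convex sets under rescaling of noncollapsed flows and follows from the standard regularity theory, so I do not anticipate a genuine obstacle. Everything else is a direct combination of convexity, time-monotonicity, and the tangent flow classification recalled at the start of the section.
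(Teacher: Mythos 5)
Your proof is correct and follows essentially the same route as the paper: convexity gives the nested intersection $\check K_{t_0}=\bigcap_{\lambda>0}\lambda K_{t_0}$ (hence a convex cone), and inward motion of the flow combined with the neck/bubble-sheet tangent-flow dichotomy, letting $s\nearrow 0$, pins $\check K_{t_0}$ inside $\mathbb{R}\times\{0\}$ or $\mathbb{R}^2\times\{0\}$. The paper's proof is identical modulo phrasing (it also notes in passing that noncompactness forces $\check K_{t_0}$ to contain a halfline, though this is not part of the stated conclusion), and your worry about the mode of convergence is unfounded since the nested inclusion $\check K_{t_0}\subseteq\lambda K_{\lambda^{-2}s}$ holds for every $\lambda$ and passes to any locally Hausdorff limit.
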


\begin{proof}
Assume $t_0=0$ and $0\in M_0$ for ease of notation. By convexity (see  \cite[Theorem 1.10]{HaslhoferKleiner_meanconvex}), we have $ \lambda\cdot K_0\subseteq \mu\cdot K_0$ whenever $\lambda\leq \mu$. Hence,
\begin{equation}
\check{K}_0:=\lim_{\lambda\to 0} \lambda\cdot {K_{0}}=\bigcap_{\lambda>0} \lambda\cdot {K_{0}}
\end{equation}
exists and is convex. Since $K_0$ is noncompact, $\check{K}_0$ contains a halfline.
Clearly, we have $\lambda\cdot \check{K}_0=\check{K}_0$ for any $\lambda>0$, i.e. $\check{K}_0$ is a cone. \\
As the flow is moving inwards, for every $t<0$ we have
\begin{equation}
\lambda K_0 \subseteq \lambda K_{\lambda^{-2}t}.  
\end{equation}
Thus, 
\begin{equation}
\check{K}_0 = \lim_{\lambda\rightarrow 0}\lambda K_0 \subseteq  \lim_{\lambda \rightarrow 0}\lambda K_{\lambda^{-2}t},
\end{equation}
where the limit in the right hand side is described (after suitable rotation) either by \eqref{neck_tangent} or by \eqref{bubble_tangent}. In either case, we infer that
\begin{equation}
\check{K}_0 \subseteq \mathbb{R}^2\times \{0\},
\end{equation}
so $\check{K}_0$ is at most $2$-dimensional. This proves the proposition.
\end{proof}

\begin{proposition}[dimension reduction]
Let $M_t=\partial K_t$ be an ancient noncollapsed mean curvature flow in $\mathbb{R}^4$. If $\check{K}_{t_0}$ contains a line, then $M_t$ is either a static hyperplane, a round shrinking $\mathbb{R}\times S^2$, a round shrinking $\mathbb{R}^2\times S^1$, a $2$-dimensional bowl times $\mathbb{R}$, or a $2$-dimensional ancient oval times $\mathbb{R}$.
\end{proposition}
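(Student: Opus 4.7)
The plan is to reduce the statement to the classification of $2$-dimensional ancient noncollapsed mean curvature flows in $\mathbb{R}^3$. First, I would show that the line in the blowdown forces $K_{t_0}$ itself to contain a line. By Proposition \ref{2dcone_blow_up}, $\check{K}_{t_0}=\bigcap_{\lambda>0}\lambda K_{t_0}\subseteq K_{t_0}$ (take $\lambda=1$). Moreover, $\check{K}_{t_0}$ is a closed convex cone with vertex at the origin, so if it contains any line $\ell$, it also contains the parallel line $\ell_0$ through the origin, obtained as the limit of $\lambda\ell$ as $\lambda\to 0$. Hence $\ell_0\subseteq K_{t_0}$. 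The classical fact that a closed convex set containing a line is invariant under translations in that direction (the recession cone contains both $\pm v$, where $v$ spans $\ell_0$) then gives the splitting $K_{t_0}=\mathbb{R}v+(K_{t_0}\cap v^{\perp})$.

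The second step is to propagate this splitting to all times. For $t<t_0$, the convex monotonicity $K_t\supseteq K_{t_0}\supseteq \ell_0$ shows that each $K_t$ contains $\ell_0$, and the same recession-cone argument gives $v$-invariance. For $t>t_0$, I would invoke forward uniqueness of smooth mean curvature flow applied to translations $x\mapsto x+sv$: since $K_{t_0}+sv=K_{t_0}$, we must have $K_t+sv=K_t$ throughout the interval of existence. Consequently the entire flow splits as $M_t=\mathbb{R}v\times M_t'$, where $\{M_t'=\partial K_t'\}$ is an ancient mean curvature flow of hypersurfaces in $v^{\perp}\cong\mathbb{R}^3$; it inherits noncollapsing from $M_t$ because both the mean curvature and the interior/exterior ball radii are preserved under the product structure.

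Finally, I would apply the classification of $2$-dimensional ancient noncollapsed flows in $\mathbb{R}^3$: by Brendle-Choi \cite{BC} and Angenent-Daskalopoulos-Sesum \cite{ADS2}, together with the basic dichotomy from \cite{HaslhoferKleiner_meanconvex,CM_uniqueness} used for the static/cylindrical cases, the flow $M_t'$ must be a static plane, a round shrinking $S^2$, a round shrinking cylinder $\mathbb{R}\times S^1$, a $2$-dimensional bowl soliton, or a $2$-dimensional ancient oval. Taking the product with $\mathbb{R}v$ yields, respectively, the five possibilities in the statement. The only mildly delicate step is the forward-in-time propagation of $v$-invariance, since the backward propagation is immediate from monotonicity of the convex regions but the forward direction requires uniqueness of smooth mean curvature flow applied to translated copies. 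Everything else reduces to standard convex-geometric facts and the cited classification in $\mathbb{R}^3$.
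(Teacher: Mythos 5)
Your argument is correct and follows essentially the same route as the paper: deduce that $K_{t_0}$ itself contains a line, conclude the flow splits off an $\mathbb{R}$-factor, and invoke the classification of $2$-dimensional ancient noncollapsed flows in $\mathbb{R}^3$ from \cite{ADS,BC}. The paper states these steps tersely; you simply fill in the convex-geometric details (recession cone, forward/backward propagation of the splitting) that the authors take as standard.
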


\begin{proof}
If $\check{K}_{t_0}$ contains a line, then so does $K_{t_0}$. Hence, the flow $M_t=\partial K_t$ splits off an $\mathbb{R}$-factor. By the classification of $2$-dimensional noncollapsed flows in $\mathbb{R}^3$ by Angenent-Daskalopoulos-Sesum \cite{ADS} and Brendle-Choi \cite{BC}, this implies the assertion.
\end{proof}

We will now show that the blowdown $\check{K}_{t_0}$ is independent of $t_0<T_{\mathrm{ext}}(\mathcal M)$, where $T_\mathrm{ext}(\mathcal M)$ denotes the extinction time of our flow. To show this, we need the following lemma (we write $\nu$ for the outward unit normal).

\begin{lemma}[interior balls]\label{some_distance}
For every $\theta_0>0$ and $H_0<\infty$ there exists some $\delta=\delta(H_0,\theta_0)>0$ with the following significance: If $p\in M_{t_0}$ is such that $H(p)\leq H_0$ and $\langle -\nu(p),v\rangle\geq \theta_0$ for every $v\in  \check{K}_{t_0}$ with $||v||=1$, then we have $B(p+v,\delta) \subseteq K_{t_0}$ for every $v\in \check{K}_{t_0}$ with $||v||\geq 1$.
\end{lemma}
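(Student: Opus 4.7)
The approach combines the noncollapsing interior ball at $p$ with the recession-cone property of the blowdown, together with one elementary convex-geometric optimization. By $\alpha$-noncollapsedness, $p$ admits an interior ball $B_1 := B(p - r\nu(p), r) \subseteq K_{t_0}$ with $r := \alpha/H(p) \geq \alpha/H_0$. Moreover, the identity $\check{K}_{t_0} = \bigcap_{\lambda > 0}\lambda K_{t_0}$ from the proof of Proposition \ref{2dcone_blow_up} (together with translation invariance of both the blowdown and the recession cone) identifies $\check{K}_{t_0}$ with the recession cone of $K_{t_0}$. In particular $K_{t_0} + \check{K}_{t_0} \subseteq K_{t_0}$.

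Fix $v \in \check{K}_{t_0}$ with $\|v\| \geq 1$ and set $\hat{v} := v/\|v\|$, so that $\langle \nu(p), \hat{v}\rangle \leq -\theta_0$ (reading the hypothesis as the natural bound on unit vectors in the cone). For every $s \in [0, \|v\|]$ the vector $s\hat{v}$ lies in $\check{K}_{t_0}$, so the translated ball $B_1 + s\hat{v} = B(p - r\nu(p) + s\hat{v},\, r)$ is still contained in $K_{t_0}$. The point $p+v$ lies at distance $\|r\nu(p) + u\hat{v}\|$ from the center of this translated ball, where $u := \|v\| - s \in [0, \|v\|]$; squaring and using $\langle \nu(p), \hat v\rangle \leq -\theta_0$ one obtains
\begin{equation*}
\|r\nu(p) + u\hat{v}\|^2 \leq r^2 - 2ru\theta_0 + u^2 = r^2 - u\bigl(2r\theta_0 - u\bigr).
\end{equation*}

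It remains to optimize the free parameter $u \in [0, \|v\|]$. The quadratic $u(2r\theta_0 - u)$ attains its maximum value $r^2\theta_0^2$ at $u = r\theta_0$. If $\|v\| \geq r\theta_0$, set $u = r\theta_0$ (so $s = \|v\| - r\theta_0 \in [0, \|v\|]$); this yields $\|r\nu(p) + u\hat{v}\| \leq r\sqrt{1-\theta_0^2}$, hence an interior ball around $p + v$ of radius at least $r\bigl(1 - \sqrt{1-\theta_0^2}\bigr) \geq \alpha\theta_0^2/(2H_0)$. If instead $\|v\| < r\theta_0$ (which forces $r > 1/\theta_0$), take $u = \|v\| \geq 1$ (so $s = 0$); monotonicity of the quadratic on $[1,r\theta_0]$ and rationalization
\begin{equation*}
r - \sqrt{r^2 - 2r\theta_0 + 1} = \frac{2r\theta_0 - 1}{r + \sqrt{r^2-2r\theta_0+1}} \geq \frac{2r\theta_0 - 1}{2r} = \theta_0 - \frac{1}{2r} \geq \frac{\theta_0}{2}
\end{equation*}
furnishes interior room at least $\theta_0/2$. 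Setting $\delta := \min\{\alpha\theta_0^2/(2H_0),\, \theta_0/2\}$ completes the proof, since the target ball around $p+v$ is contained in a translate $B_1 + s\hat v \subseteq K_{t_0}$. The argument is essentially bookkeeping; the only small subtlety is the case split when the unconstrained optimum $u=r\theta_0$ falls outside $[0,\|v\|]$, which is easily handled by the boundary choice $s=0$.
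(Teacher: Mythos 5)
Your proof is correct and takes essentially the same route as the paper: combine the recession-cone property of the blowdown with the $\alpha$-noncollapsing interior ball at $p$. The only cosmetic difference is that the paper reaches $p+v$ via concavity (hence monotonicity) of the distance-to-boundary function along the ray, whereas you translate the interior ball along the ray using $K_{t_0}+\check{K}_{t_0}\subseteq K_{t_0}$; both are one-line consequences of convexity, and your version spells out the quantitative optimization the paper leaves implicit.
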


\begin{proof}
First, note that as $\lim_{\lambda\rightarrow 0}\lambda K_{t_0}=\lim_{\lambda\rightarrow 0}\lambda (K_{t_0}-p)$,  we have $p+sv\in K_{t_0}$ for every $v\in\check{K}_{t_0}$ with $||v||=1$ and every $s\geq 0$.  The convexity of $K_{t_0}$ implies that the function $(0,\infty)\ni s\mapsto \mathrm{dist}(p+sv, M_{t_0})$ is concave and positive on $K_{t_0}$, and thus, nondecreasing. Taking an interior tangent ball at $p$ of radius $\alpha/H_0$ yields the desired result.     
\end{proof}

\begin{proposition}[time-independence of blowdown]\label{height vector}
The blowdown $\check{K}_{t_0}$ is independent of $t_0<T_{\mathrm{ext}}(\mathcal{M})$.
\end{proposition}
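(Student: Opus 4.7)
The plan is to verify both inclusions $\check K_{t_1} \subseteq \check K_{t_2}$ and $\check K_{t_2} \subseteq \check K_{t_1}$ for any $t_1 < t_2 < T_{\mathrm{ext}}(\mathcal M)$. One direction is immediate: since the flow moves inward we have $K_{t_2} \subseteq K_{t_1}$, and scaling by $\lambda > 0$ followed by $\lambda \to 0$ yields $\check K_{t_2} \subseteq \check K_{t_1}$.

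For the reverse inclusion I would first dispose of the case where $\check K_{t_1}$ contains a line, in which case the preceding dimension-reduction proposition forces $M_t$ to be one of the explicit classified models, for which the blowdown is manifestly time-independent by inspection. I may therefore assume $\check K_{t_1}$ contains no line, which ensures that the strict dual cone $\{y : \langle y, v\rangle > 0 \text{ for all } 0 \neq v \in \check K_{t_1}\}$ has nonempty interior.

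Fix a nonzero $v \in \check K_{t_1}$. It suffices to show that $p + sv \in K_{t_2}$ for all sufficiently large $s$ and some $p \in M_{t_1}$, since then $v$ is a recession direction of the closed convex set $K_{t_2}$ and hence lies in $\check K_{t_2}$. To find such a $p$, I would choose $y$ in the strict dual of $\check K_{t_1}$, normalized so that $\langle y, v'\rangle \geq \theta_0 \|v'\|$ for all $v' \in \check K_{t_1}$, and let $p \in M_{t_1}$ be a minimizer of the linear functional $x \mapsto \langle x, y\rangle$ on $K_{t_1}$. This minimum is attained because the functional grows along every escape sequence in $K_{t_1}$; at $p$ one has $-\nu(p) = y/\|y\|$, so $\langle -\nu(p), v'\rangle \geq \theta_0/\|y\|$ for every unit $v' \in \check K_{t_1}$, and noncollapsing combined with coarse control inherited from the tangent flow at $-\infty$ yields an upper bound $H(p) \leq H_0$. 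Lemma \ref{some_distance} then provides $B(p + sv, \delta) \subseteq K_{t_1}$ for all $s \geq 1/\|v\|$, with $\delta = \delta(H_0,\theta_0) > 0$, and the avoidance principle applied to shrinking spheres yields $B(p + sv, \sqrt{\delta^2 - 6(t - t_1)}) \subseteq K_t$ for $t - t_1 < \delta^2/6$. In particular $p + sv \in K_{t_2}$ whenever $t_2 - t_1 < \delta^2/6$, giving $v \in \check K_{t_2}$. For an arbitrary time difference within the extinction window, I would iterate the argument in finitely many steps of length less than $\delta^2/6$.

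The main obstacle I foresee is ensuring uniform control of the parameters $H_0$, $\theta_0$ (and hence $\delta$) throughout the iteration: one must rule out both degeneration of the dual cone of $\check K_t$ and blowup of $H(p)$ at successive minimizers. This should follow from the already-established monotonicity $\check K_{t_2} \subseteq \check K_{t_1}$ (so dual cones can only grow with $t$) together with the uniform noncollapsed regularity of the flow on the compact interval $[t_1, t_2]$, but making this fully rigorous may require some care.
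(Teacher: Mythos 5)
Your proposal is logically sound and lands on the same key tool (Lemma \ref{some_distance} plus avoidance), but it organizes the argument differently from the paper. The paper runs a connectedness argument: it defines $I = \{t \in [t_0, T_{\mathrm{ext}}) : \check K_t = \check K_{t_0}\}$, shows $I$ is a relatively open subinterval (each $t_1 \in I$ extends to $[t_1, t_1 + \delta^2/6]$ by Lemma \ref{some_distance} and avoidance, with $\delta$ allowed to depend on $t_1$), and then shows $I$ cannot be of the form $[t_0, t_1)$ with $t_1 < T_{\mathrm{ext}}$ by a contradiction: if $v \in \check K_{t_0} \setminus \check K_{t_1}$, one uses approximating points $p_t \to p \in M_{t_1}$ with controlled curvature to force $\langle \nu(p_t), v\rangle \to 0$, hence $v \in T_pM_{t_1}$ at every $p$, so $M_{t_1}$ would split off a line in direction $v$, contradicting strict convexity. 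This open-plus-closed scheme is precisely designed to sidestep the need for uniform $\delta$ across the whole interval. Your iteration scheme, by contrast, requires the uniform lower bound on $\delta = \delta(H_0,\theta_0)$ that you flag at the end as the main unresolved obstacle; your sketch for closing it (monotonicity of dual cones plus compactness of minimizers in a fixed compact spacetime region, hence bounded $H$) does work, so there is no genuine gap, but it demands more bookkeeping than the paper's route. One small point: the source of the uniform $H_0$-bound is simply smoothness of the flow on compact spacetime regions, not "coarse control inherited from the tangent flow at $-\infty$," which is irrelevant here.
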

\begin{proof}
It suffices to prove this in case $M_t=\partial K_t$ in $\mathbb{R}^4$ is noncompact and strictly convex.
Fix some $t_0$ and consider the set  $I$ of all $t\in [t_0,\mathrm{T}_{\mathrm{ext}}(\mathcal{M}))$ such that $\check{K}_{t_0}=\check{K}_{t}$. Since $K_{t_2} \subseteq K_{t_1}$ for $t_2 \geq t_1$ we clearly have that $\check{K}_{t_2} \subseteq \check{K}_{t_1}$  for $t_2 \geq t_1$, so $I$ is a (potentially degenerate) subinterval of $[t_0,\mathrm{T}_{\mathrm{ext}}(\mathcal{M}))$ that contains $t_0$.  Letting $t_1\in I$, and taking any $p\in \partial K_{t_1}$, strict convexity implies that  
\begin{equation}
\inf_{v\in \check{K}_{t_1}-\{0\}} \left\langle - \nu(p),\frac{v}{||v||}\right\rangle >0.
\end{equation}
By the lemma above, there exists some $\delta>0$ such that for every $||v||\geq 1$, $B(p+v,\delta) \subseteq K_{t_1}$. Therefore, by avoidance, $p+v\in K_t$ for all $t$ with $|t-t_1|\leq \delta^2/6$, so all such $t$ are also in $I$.  \\
Thus, if $I\neq [t_0,T_{\mathrm{ext}}(\mathcal{M}))$ then $I= [t_0,t_1)$ for some $t_1<T_{\mathrm{ext}}(\mathcal{M})$. We want to show that this is impossible. Suppose, for the sake of contradiction, that there exists some $v\in \check{K}_{t_0} \setminus  \check{K}_{t_1}$. Given any $p\in M_{t_1}$, by smoothness, we can find $t\in I$ arbitrarily close to $t_1$ and $p_t\in M_t$ such $p_t\rightarrow p$ and such that $H(p_t)\leq H(p)+1$. As above, Lemma \ref{some_distance} and avoidance imply that   
\begin{equation}
\langle \nu(p_t),v\rangle\rightarrow 0,
\end{equation}
so $v\in T_pM_{t_1}$. But since $p\in M_{t_1}$ was arbitrary, this implies that $M_{t_1}$ splits a line in the direction $v$, contradicting the strict convexity of $M_{t_1}$. 
\end{proof}

We end this section by giving a (well-known) description of the blowdown in terms of graphical directions:

\begin{proposition}[alternative description of blowdown]\label{blow_down_normals}
Let $K_t=\partial M_t$ be a noncollapsed strictly convex flow. Then 
\begin{equation}
\check{K}_{t_0}=\{\omega\in \mathbb{R}^{n+1}\;\mathrm{s.t }\; \langle \nu(p), \omega\rangle <0\;\mathrm{for\; every }\;p\in M_{t_0}\}\cup \{0\}.
\end{equation}
\end{proposition}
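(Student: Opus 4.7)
The plan is to identify $\check K_{t_0}$ with the recession cone of $K_{t_0}$, and then use strict convexity to convert the ``ray stays inside'' characterization of the recession cone into the normal-cone condition in the statement. As in the proof of Proposition \ref{2dcone_blow_up}, we may translate so that $0\in K_{t_0}$, in which case convexity gives $\lambda K_{t_0}\subseteq \mu K_{t_0}$ whenever $0<\lambda\leq\mu$ and hence
\begin{equation*}
\check K_{t_0}=\bigcap_{\lambda>0}\lambda K_{t_0}.
\end{equation*}
A standard convex-geometry computation (take closed-set limits of the convex combinations $(1-t\lambda)x+t\lambda(\omega/\lambda)$ as $\lambda\to 0$) identifies this intersection with the recession cone $\{\omega:\, x+t\omega\in K_{t_0}\text{ for all } x\in K_{t_0},\, t\geq 0\}$. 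It thus suffices to show that a nonzero $\omega$ lies in this recession cone if and only if $\langle\nu(p),\omega\rangle<0$ for every $p\in M_{t_0}$.

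For the forward inclusion, I would fix $\omega\in\check K_{t_0}\setminus\{0\}$ and $p\in M_{t_0}$. Since $p+t\omega\in K_{t_0}$ for every $t\geq 0$, applying the supporting-hyperplane inequality $\langle\nu(p),y-p\rangle\leq 0$ to $y=p+t\omega$ yields $t\langle\nu(p),\omega\rangle\leq 0$, hence $\langle\nu(p),\omega\rangle\leq 0$. If equality held, the whole ray $\{p+t\omega:\, t\geq 0\}$ would lie in the intersection of $K_{t_0}$ with the supporting hyperplane through $p$; by strict convexity this intersection equals $\{p\}$, contradicting $\omega\neq 0$. Hence the inequality is strict.

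For the reverse inclusion, let $\omega\neq 0$ satisfy $\langle\nu(p),\omega\rangle<0$ for every $p\in M_{t_0}$, fix any $x\in K_{t_0}$, and set $f:=\sup\{t\geq 0:\, x+t\omega\in K_{t_0}\}$. If $x$ is interior to $K_{t_0}$ then clearly $f>0$; if $x\in M_{t_0}$ then the hypothesis at $p=x$ says $\omega$ points strictly inward, so again $f>0$. Suppose toward a contradiction that $f<\infty$. Then by closedness $q:=x+f\omega\in M_{t_0}$, and applying the supporting-hyperplane inequality at $q$ to the point $x=q-f\omega\in K_{t_0}$ gives $-f\langle\nu(q),\omega\rangle\leq 0$, i.e.\ $\langle\nu(q),\omega\rangle\geq 0$, contradicting the hypothesis. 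Therefore $f=\infty$, so $\omega$ lies in the recession cone, i.e.\ $\omega\in\check K_{t_0}$.

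There is no substantive obstacle here; the proposition is essentially a standard convex-geometry exercise, and strict convexity enters only once, in the forward inclusion, to rule out the degenerate case $\langle\nu(p),\omega\rangle=0$.
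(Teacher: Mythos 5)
Your proof is correct, and it follows the same overall strategy as the paper (show both inclusions by elementary convex geometry), but the mechanics differ slightly in a way worth noting. In the forward inclusion, the paper merely asserts the containment is ``clear'' from the fact that $p+sv\in K_{t_0}$ for all $s>0$; you fill in the detail that the supporting-hyperplane inequality gives $\langle\nu(p),v\rangle\leq 0$, and that strict convexity upgrades this to a strict inequality because the supporting hyperplane can touch $K_{t_0}$ only at $p$. In the reverse inclusion, the paper argues that if the ray $p+sv$ exits $K_{t_0}$ at some $s_0>0$ then the hypothesis forces it to re-enter immediately, contradicting the concavity of the (signed) distance to $M_{t_0}$; you instead apply the supporting-hyperplane inequality at the exit point $q=x+f\omega$ with the test point $x=q-f\omega\in K_{t_0}$, obtaining $\langle\nu(q),\omega\rangle\geq 0$ directly. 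Both implementations are valid; yours has the mild advantage that it uses only the supporting-hyperplane inequality throughout and does not invoke the distance function, while the paper's version is more in keeping with the distance-concavity argument it already set up in Lemma~\ref{some_distance}. Your explicit identification of $\check K_{t_0}$ with the recession cone is a clean way to organize the argument, and mirrors what is implicit in Proposition~\ref{2dcone_blow_up}.
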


\begin{proof} Let $\Omega=\{\omega\in \mathbb{R}^{n+1}\;\textrm{s.t } \langle \nu(p), \omega\rangle <0\;\textrm{for every }p\in M_{t_0}\}$. The containment $\check{K}_{t_0}\subseteq \Omega \cup \{0\}$ is clear, as for every $p\in M_{t_0}$ and $v\in\check{K}_{t_0}$ we have $p+sv\in K_{t_0}$ for every positive $s$.  Conversely, if $v\in \Omega$,  for every $p\in M_{t_0}$ we have that $p+sv \in K_{t_0}$ for small values of $s$. Thus, if $p+s_0v\in M_{t_0}$ for some $s_0>0$  then for $s>s_0$ with $s-s_0$ sufficiently small $p+sv\in K_{t_0}$, which contradicts the concavity of the distance to the boundary. Thus, the entire line $\{p+sv\;|\;s\in [0,\infty)\}$ is contained in $K_{t_0}$, so $v\in \check{K}_{t_0}$. 
\end{proof}

\bigskip

\subsection{The bubble-sheet scale} 

Let $\mathcal M$ be an ancient mean curvature flow, with a bubble-sheet tangent flow at $-\infty$. Given a point $X=(x,t)\in \mathcal{M}$ and a scale $r>0$, we consider the flow
\begin{equation}
\mathcal{M}_{X,r}=\mathcal{D}_{1/r}(\mathcal M -X),
\end{equation}
which is obtained from $\mathcal M$ by translating $X$ to the space-time origin and parabolically rescaling by $1/r$. Here, $\mathcal{D}_\lambda (x,t) = (\lambda x,\lambda^2 t)$.

\begin{definition}[$\eps$-cylindrical]\label{eps_cyl}
We say that $\mathcal M$ is \emph{$\varepsilon$-cylindrical around $X$ at scale $r$}, if $\mathcal{M}_{X,r}$ is $\varepsilon$-close in $C^{\lfloor1/\varepsilon \rfloor}$ in $B(0,1/\varepsilon)\times [-2,-1]$ to the evolution of a round shrinking bubble-sheet cylinder with radius $r(t)=\sqrt{-2t}$ and axis through the origin.
\end{definition}

Note that in \cite{CHH} and \cite{CHHW}, being $\eps$-cylindrical is defined by closeness to the evolution of a neck, rather than the evolution of a bubble-sheet. We hope that the benefits of the analogies coming from overloading the term outweigh the potential confusion. In any case, throughout the present paper $\varepsilon$-cylindrical is always meant in the sense of Definition \ref{eps_cyl}.\\    

Given any point $X=(x,t)\in\mathcal{M}$, we analyze the solution around $X$ at the diadic scales $r_j=2^j$, where $j\in \mathbb{Z}$. 

\begin{theorem}\label{thm_finding_necks}
For any small enough $\varepsilon>0$, there is a positive integer $N=N(\varepsilon)<\infty$ with the following significance. If $\mathcal M$ is an ancient flow asymptotic to a bubble-sheet, which is not the round shrinking bubble-sheet cylinder, then for every $X\in \mathcal{M}$ there exists an integer $J(X)\in\mathbb{Z}$ such that
\begin{equation}\label{eq_thm_quant1}
\textrm{$\mathcal M$ is not $\varepsilon$-cylindrical around $X$ at scale $r_j$ for all $j<J(X)$},
\end{equation}
and
\begin{equation}\label{eq_thm_quant2}
\textrm{$\mathcal M$ is $\tfrac{\varepsilon}{2}$-cylindrical around $X$ at scale $r_j$ for all $j\geq J(X)+N$}.
\end{equation}
\end{theorem}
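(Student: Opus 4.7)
The plan is to set $J(X):=\min A(X)$ where $A(X):=\{j\in\mathbb{Z}:\mathcal M\text{ is }\varepsilon\text{-cylindrical around }X\text{ at scale }r_j\}$, so that condition \eqref{eq_thm_quant1} holds by construction, and then to establish the existence of a universal $N=N(\varepsilon)$ by Huisken's monotonicity together with a quantitative rigidity/regularity argument for the bubble-sheet shrinker.

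\emph{Well-definedness of $J(X)$.} By the hypothesis that $\mathcal M$ has a bubble-sheet tangent flow at $-\infty$, together with Colding--Minicozzi uniqueness \cite{CM_uniqueness}, the parabolic rescalings $\mathcal{M}_{X,r}$ converge smoothly to a shrinking bubble-sheet cylinder (with axis through $X$) as $r\to\infty$ for any center $X\in\mathcal M$. Hence $A(X)$ contains every sufficiently large integer. Conversely, since $\mathcal M$ is convex and noncollapsed (so every $X$ is a smooth regular point), $\mathcal{M}_{X,r}$ converges to a flat hyperplane as $r\to 0$, which is not $\varepsilon$-close to a bubble-sheet for small $\varepsilon$. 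Hence $A(X)$ is bounded below and $J(X)$ is a well-defined integer.

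\emph{Existence of $N$.} Let $\Theta^*:=\Theta(\mathbb R^2\times S^1(\sqrt 2))$ and set $\Theta_X(r):=\Theta(\mathcal M,X,r)$. Huisken's monotonicity gives that $\Theta_X$ is nondecreasing in $r$, while the bubble-sheet tangent flow hypothesis gives $\Theta_X(r)\nearrow \Theta^*$ as $r\to\infty$. The $\varepsilon$-cylindricality at scale $r_{J(X)}$ forces $\Theta_X(r_{J(X)})\geq \Theta^*-\eta(\varepsilon)$ with $\eta(\varepsilon)\to 0$ as $\varepsilon\to 0$, so that $\Theta_X(r)\in[\Theta^*-\eta(\varepsilon),\Theta^*]$ for every $r\geq r_{J(X)}$. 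A quantitative Brakke-type $\varepsilon$-regularity theorem then implies that at every scale $r\geq c(\varepsilon)\cdot r_{J(X)}$, $\mathcal M$ is $C^{\lfloor 2/\varepsilon\rfloor}$-close to a rigid motion of the bubble-sheet shrinker on the corresponding parabolic window around $X$. Choosing $\varepsilon$ small enough that the consecutive bubble-sheet approximations must share the same axis through $X$ (by a nearest-point/lower-order-error argument), and absorbing the constant $c(\varepsilon)$ into a dyadic loss, we obtain $\varepsilon/2$-cylindricality at scale $r_j$ for every $j\geq J(X)+N$, with $N=N(\varepsilon)=\lceil\log_2 c(\varepsilon)\rceil+O(1)$.

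\emph{Main obstacle.} The heart of the matter is the quantitative rigidity input: a small density excess $\Theta^*-\Theta_X(r)\leq \eta$ should force graphical smooth closeness to the bubble-sheet shrinker on a controlled window. The two key ingredients are (i) the classification of noncollapsed self-shrinkers in $\mathbb R^4$ (plane, neck $\mathbb R\times S^2(2)$, bubble-sheet $\mathbb R^2\times S^1(\sqrt 2)$) together with the pairwise distinctness of their Gaussian densities, so that small density excess above $\Theta^*$ rules out concentration on any other shrinker; and (ii) the noncollapsedness-preserving smooth compactness of ancient mean curvature flows, which upgrades the density bound to smooth graphical closeness. If the quantitative regularity is inconvenient to invoke directly, the same conclusion is available via compactness: a hypothetical counterexample sequence, rescaled so that the first $\varepsilon$-cylindrical scale equals $1$, would extract a limit ancient noncollapsed flow that is $\varepsilon$-cylindrical at scale $1$; by the dichotomy \cite[Thm.~1.14]{HaslhoferKleiner_meanconvex} and the classification of neck-type ancient flows \cite{BC,BC2}, this limit must have a bubble-sheet tangent flow at $-\infty$, and hence becomes $\varepsilon/2$-cylindrical at all large enough scales, contradicting the counterexample property after a second rescaling at the diverging bad scale.
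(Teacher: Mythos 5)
The paper's own proof of this statement is a one-line citation: it asserts that the argument, based on quantitative differentiation in the sense of \cite{CHN_stratification}, is identical to that of \cite[Theorem~2.7]{CHH}. Your proposal is in the same circle of ideas---Huisken monotonicity, compactness for noncollapsed ancient flows, and rigidity/classification of shrinkers via Gaussian density---so at the level of tools it is not a genuinely new route. But as written it has a gap in the execution, and the way the tools are assembled hides where the real work lies.

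In your fallback compactness argument you rescale so that ``the first $\varepsilon$-cylindrical scale equals $1$,'' extract a limit $\mathcal{L}$, argue $\mathcal{L}$ has a bubble-sheet tangent at $-\infty$, and conclude $\mathcal{L}$ is $\varepsilon/2$-cylindrical at all large scales. This is correct but does not yield the contradiction: the bad scales of the counterexample sequence have been pushed out to $2^{j_i-J(X_i)}\ge 2^i\to\infty$, and smooth/Brakke convergence $\hat{\mathcal{M}}^i\to\mathcal{L}$ gives control only on compact space-time regions, hence on scales in a bounded window $[R_0,R_i']$ with $R_i'$ that need not grow like $2^i$. So ``$\mathcal{L}$ is eventually $\varepsilon/2$-cylindrical'' does not transfer to $\hat{\mathcal{M}}^i$ at the diverging bad scale. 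You mention ``a second rescaling at the diverging bad scale,'' but after that second rescaling the relevant limit is a \emph{different} flow whose identification is not supplied by the large-scale behavior of $\mathcal{L}$. The correct move is to rescale directly so that the bad scale is $1$; then the first $\varepsilon$-cylindrical scale tends to $0$, and the density $\Theta_X$ is pinched in $[\Theta^*-\eta(\varepsilon),\Theta^*]$ at all positive scales of the rescaled flows. In the limit one sees a noncollapsed Brakke flow with a singular point at the space-time origin whose density lies in that pinched interval; for $\varepsilon$ small enough that $\eta(\varepsilon)$ is below the gap between $\Theta^*=\Theta(\mathbb{R}^2\times S^1)$ and the next noncollapsed shrinker density, the tangent flow at the origin must be the bubble-sheet, forcing the density to equal $\Theta^*$ identically, whence Huisken rigidity identifies the limit as the exact round shrinking bubble-sheet and produces the contradiction. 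It is this constant-density rigidity step---not the dichotomy of \cite{HaslhoferKleiner_meanconvex} or the neck classification \cite{BC,BC2}---that actually pins down the limit, and it is missing from your writeup. The same missing step lies behind your first argument's appeal to an unnamed ``quantitative Brakke-type $\varepsilon$-regularity theorem,'' which is not a citable off-the-shelf result but precisely the compactness/rigidity lemma just described; in particular the quantifier order matters (the scale threshold $c(\varepsilon)$ is allowed to depend on the pinching $\eta(\varepsilon)$, which is what saves the argument), and that should be made explicit.

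Once those points are fixed, your argument is sound and is, in essence, the unpacked version of the quantitative differentiation proof that the paper references. The quantitative differentiation framework of \cite{CHN_stratification} packages the same monotonicity-plus-rigidity input more systematically---it bounds the number of scales at which the density can drop by a fixed $\delta$, which directly gives the integer $N(\varepsilon)$ without running the full contradiction argument---so it is worth being aware that the paper's cited route is shorter once the framework is set up.
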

\begin{proof}
The proof, based on quantitative differentiation (c.f. \cite{CHN_stratification}), is identical to the one of \cite[Theorem 2.7]{CHH}.
\end{proof}

We fix a small enough parameter $\varepsilon>0$ quantifying the quality of the bubble-sheet for the rest of the paper.

\begin{definition}[bubble-sheet scale]
The \emph{bubble-sheet scale} of $X\in\mathcal{M}$ is defined by
\begin{equation}
Z(X)=2^{J(X)}.
\end{equation}
\end{definition}

\bigskip

\section{Foliations and barriers}\label{sec_fol_barriers}

The proof of Theorem \ref{main_thm} relies on the spectral analysis of the equation describing the evolution by mean curvature flow of a graph over the cylinder. Importantly, the flow is not an entire graph over a cylinder, but rather, a graph over a large portion of it. Thus, one needs to introduce some truncation, and control the error introduced by it. Moreover, one needs quantitative bounds on the graphical radius - the radius in which the flow is a small graph over the cylinder. In the neck-singularity setting, both of these goals are achieved in \cite{ADS} by using a foliation whose leaves are fixed points of the rescaled MCF.  In this section, we construct analogues of that foliation which are suited for the analysis over the bubble-sheet cylinder $\mathbb{R}^2\times S^1$. \\         

We recall from Angenent-Daskalopoulos-Sesum that there exists some $L_0>1$ such that  for every $a\geq L_0$ and $b>0$ there are self-shrinkers 
\begin{align}
{\Sigma}_a &= \{ \textrm{surface of revolution with profile } r=u_a(x_1), 0\leq x_1 \leq a\},\\
\tilde{{\Sigma}}_b &= \{ \textrm{surface of revolution with profile } r=\tilde{u}_b(x_1), 0\leq x_1 <\infty\},\nonumber
\end{align}
as illustrated in \cite[Fig. 1]{ADS}, see also \cite{KM}.     We will refer to these shrinkers as ADS-shrinkers and KM-shrinkers, respectively. Here, the parameter $a$ captures where the  concave functions $u_a$ meet the $x_1$-axis, namely $u_a(a)=0$, and the parameter $b$ is the asymptotic slope of the convex functions $\tilde{u}_b$, namely $\lim_{x_1\to \infty} \tilde{u}_b'(x_1)=b$. A detailed description of these shrinkers can be found in \cite[Sec. 8]{ADS}. We recall:

\begin{lemma}[Lemmas 4.9 and 4.10 in \cite{ADS}] \label{lower_d_ADS}
There exists some $b_0>0$ such that the self-shrinkers ${\Sigma}_a,\tilde{{\Sigma}}_b$ and the cylinder ${\Sigma}:=\{x_2^2+x_3^2=2\}\subset \mathbb{R}^3$ foliate the region
\begin{equation}
\{(x_1,x_2,x_3)\in \mathbb{R}^3\;|\; x_2^2+x_3^2 \leq b_0^2x_1^2\;\textrm{and } x_1\geq L_0\}.
\end{equation}
Moreover, denoting by $\nu_{\textrm{fol}}$ the outward unit normal of this family, we have that
\begin{equation}\label{eq_calibration}
\mathrm{div}(e^{-x^2/4} \nu_{\mathrm{fol}})=0,
\end{equation}
i.e. the shrinker family forms a calibration for the Gaussian area.
\end{lemma}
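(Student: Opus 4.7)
The statement has two parts: foliation of a one-sided neighborhood of the cylinder by self-shrinkers, and the Gaussian calibration identity. The calibration is essentially automatic once the foliation is in place, so the plan is to focus on the foliation. I would work with the ODE for rotationally symmetric shrinkers and use monotonicity together with continuity in the parameters $a$ and $b$ to sweep out a $\delta$-strip around $r=\sqrt{2}$ uniformly for all $x_1\geq L_0$.

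For a surface of revolution with profile $r=u(x_1)$, the shrinker equation $H=\tfrac{1}{2}\langle x,\nu\rangle$ becomes
\begin{equation*}
u''=(1+u'^2)\left(\tfrac{1}{u}-\tfrac{u-x_1 u'}{2}\right),
\end{equation*}
with $u\equiv\sqrt{2}$ as the cylinder solution. Existence, regularity, and qualitative shape of the concave ADS profiles $\{u_a\}_{a\geq L_0}$ (closing off at $u_a(a)=0$) and the convex KM profiles $\{\tilde u_b\}_{b>0}$ (extending to infinity with slope $b$) are already established in \cite{ADS} and \cite{KM} via shooting arguments; I would quote these. The central new ingredient is strict ordering: for $L_0\leq a_1<a_2$ one has $u_{a_1}<u_{a_2}<\sqrt{2}$ on the common domain with $u_a\nearrow\sqrt{2}$ on compact sets as $a\to\infty$, and dually $\sqrt{2}<\tilde u_{b_1}<\tilde u_{b_2}$ for $0<b_1<b_2$ with $\tilde u_b\searrow\sqrt{2}$ as $b\to 0^+$. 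Both orderings are consequences of the maximum principle applied to the second-order shrinker ODE, using the cylinder as a separating solution.

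Fixing $x_1\geq L_0$ and varying $a\in (x_1,\infty)$ produces, by continuity and monotonicity, a strictly increasing homeomorphism $a\mapsto u_a(x_1)$ from $(x_1,\infty)$ onto $(0,\sqrt{2})$; similarly $b\mapsto\tilde u_b(x_1)$ is a strictly increasing homeomorphism from a right neighborhood of $0$ onto a right neighborhood of $\sqrt{2}$. Together with the cylinder leaf itself, this yields the foliation of the region $\{r\leq\sqrt{2+\delta},\, x_1\geq L_0\}$. The calibration identity is then immediate: for any smooth foliation with unit normal $\nu_{\mathrm{fol}}$ one has $\mathrm{div}_{\mathbb{R}^3}(\nu_{\mathrm{fol}})=H$ at each point of each leaf (the term $\langle \nabla_{\nu_{\mathrm{fol}}}\nu_{\mathrm{fol}},\nu_{\mathrm{fol}}\rangle$ vanishes since $|\nu_{\mathrm{fol}}|=1$), hence
\begin{equation*}
\mathrm{div}\bigl(e^{-|x|^2/4}\nu_{\mathrm{fol}}\bigr)=e^{-|x|^2/4}\bigl(H-\tfrac{1}{2}\langle x,\nu_{\mathrm{fol}}\rangle\bigr)=0,
\end{equation*}
since each leaf is a self-shrinker.

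The main obstacle is the quantitative uniformity of $\delta$: one needs a single $\delta>0$ that works for every $x_1\geq L_0$, and this fails for small $x_1$. The point is that the linearization of the shrinker ODE around $u\equiv\sqrt{2}$ has the Ornstein--Uhlenbeck-type structure whose Jacobi fields are well-behaved only in the regime $x_1\gg 1$, and $L_0$ is precisely the threshold (chosen in \cite{ADS}) beyond which the linear analysis yields two-sided barriers of uniform thickness. Establishing this uniform thickness is where the real work lies, while everything else in the plan is a soft continuity/maximum-principle argument.
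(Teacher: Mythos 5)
The paper offers no proof of this lemma; the heading itself is a citation of Lemmas 4.9 and 4.10 of \cite{ADS}, and the surrounding text simply uses the conclusion to build the $\mathbb{R}^4$ foliation in Lemma \ref{foli_lemma}. Your proposal goes further than the paper by attempting an independent argument, and the calibration half of that argument is correct and complete: since $\nu_{\mathrm{fol}}$ is a unit vector field, $\mathrm{div}_{\mathbb{R}^3}\nu_{\mathrm{fol}}$ equals the mean curvature of the leaf through each point because $\langle\nabla_{\nu_{\mathrm{fol}}}\nu_{\mathrm{fol}},\nu_{\mathrm{fol}}\rangle=0$, and the shrinker equation $H=\tfrac12\langle x,\nu\rangle$ then annihilates the weighted divergence on every leaf.

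The foliation half, however, has two genuine gaps. First, the strict ordering of the $u_a$ (and dually of the $\tilde u_b$) is not an immediate maximum-principle consequence of the shrinker ODE as you assert. Writing the ODE as $u''=f(x_1,u,u')$ with $f(x_1,u,p)=(1+p^2)\bigl(\tfrac1u-\tfrac{u-x_1p}{2}\bigr)$, one computes $\partial_u f=(1+p^2)\bigl(-\tfrac1{u^2}-\tfrac12\bigr)<0$, which is the \emph{wrong} sign for the standard ODE comparison principle: linearizing the difference of two solutions produces a second-order operator whose zeroth-order coefficient is positive, so interior crossings are not precluded by a bare maximum-principle argument, and the strong maximum principle for shrinkers only forbids one-sided tangency, not transversal intersection. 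In \cite{ADS} the ordering is obtained via a shooting argument together with careful barrier constructions, not by a one-line comparison. Second, you openly defer the quantitative step — the existence of a uniform $\delta>0$ valid for all $x_1\geq L_0$ — to the linearized barrier analysis of \cite{ADS}, which is exactly where the substance of the lemma lives. At that point your proposal reduces to the same citation the paper already gives. As a description of what the proof in \cite{ADS} actually does this is a fair roadmap, but it is not a self-contained argument, and the maximum-principle shortcut for the ordering does not work as stated.
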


We will now shift and rotate this foliation to construct a suitable foliation in $\mathbb{R}^4$:

\begin{definition}[bubble-sheet foliation]\label{new_fol_def}
For every $a\geq L_0$, we denote by $\Gamma_a$ the doubly-rotationally symmetric hypersurface in $\mathbb{R}^4$ given by
\begin{equation}
\Gamma_a=\{(r\cos\theta ,r\sin\theta,x_3,x_4)\in  \mathbb{R}^4:\theta\in [0,2\pi), (r-1,x_3,x_4) \in {\Sigma}_a \}.
\end{equation}
Similarly, for each $b>0$ we denote by ${\tilde{\Gamma}}_b$ the doubly-rotationally symmetric hypersurface in $\mathbb{R}^4$ given by
\begin{equation}
{\tilde{\Gamma}}_b=\{(r\cos\theta ,r\sin\theta,x_3,x_4)\in  \mathbb{R}^4:\theta\in [0,2\pi), (r-1,x_3,x_4) \in \tilde{{\Sigma}}_b \}.
\end{equation}
\end{definition}

\begin{lemma}[Foliation lemma]\label{foli_lemma}
There exist $b_0>0$ and $L_1\geq 3$ such that the hypersurfaces ${\Gamma}_a$, ${\tilde{\Gamma}}_b$, and the cylinder ${\Gamma}:=\mathbb{R}^2\times S^1(\sqrt{2})$ foliate the domain 
\begin{equation}
{\Omega}:=\left\{(x_1,x_2,x_3,x_4)\in \mathbb{R}^4\;|\; x_3^2+x_4^2 \leq b_0^2(x_1^2+x_2^2)\;\textrm{and } x_1^2+x_2^2\geq L_1^2\right\}.
\end{equation}
Moreover, denoting by $\nu_{\mathrm{fol}}$ the outward pointing unit normal of this foliation, we have that 
\begin{equation}\label{negative_div}
\mathrm{div}(\nu_{\mathrm{fol}}e^{-|x|^2/4})\leq 0\;\;\;\textrm{inside the cylinder},
\end{equation} 
and
\begin{equation}\label{positive_div}
\mathrm{div}(\nu_{\mathrm{fol}}e^{-|x|^2/4})\geq 0\;\;\;\textrm{outside the cylinder}.
\end{equation}  
\end{lemma}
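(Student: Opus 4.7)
The foliation property is essentially a formal consequence of the ADS foliation lemma, while the inequality \eqref{negative_div}--\eqref{positive_div} requires a direct computation of the mean curvature of the new leaves and a sign analysis.

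First I would handle the foliation statement. By construction, each point of $\Omega$ has the form $(r_{12}\cos\theta, r_{12}\sin\theta, x_3, x_4)$ with $r_{12} = \sqrt{x_1^2 + x_2^2}$. Setting $\hat x = (r_{12} - 1, x_3, x_4) \in \mathbb{R}^3$, the condition $(x_1,\ldots,x_4) \in \Omega$ translates, as long as $L_1 \ge L_0 + 1$, precisely to $\hat x$ lying in the region of the ADS foliation from Lemma \ref{lower_d_ADS} (i.e.\ $\hat x_2^2 + \hat x_3^2 \le 2+\delta$, $\hat x_1 \ge L_0$). Since that region is foliated by the $\Sigma_a$, $\tilde\Sigma_b$ and $\Sigma$, and since each leaf of the ADS foliation is rotationally symmetric about the $\hat x_1$-axis, rotating in the $\theta$ variable pulls the ADS foliation back to a foliation of $\Omega$ by the $\Gamma_a$, $\tilde\Gamma_b$ and $\Gamma$. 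The outward foliation normal in $\mathbb{R}^4$ is
\begin{equation}
\nu_\mathrm{fol}^{(4)} = (\hat n_1 \cos\theta,\, \hat n_1 \sin\theta,\, \hat n_2,\, \hat n_3),
\end{equation}
where $\hat n = (\hat n_1,\hat n_2,\hat n_3)$ denotes the outward unit normal of the corresponding ADS leaf in $\mathbb{R}^3$.

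Next I would compute $\mathrm{div}(\nu_\mathrm{fol}^{(4)} e^{-|x|^2/4}) = e^{-|x|^2/4}\bigl(H^{(4)} - \tfrac{1}{2}\langle x, \nu_\mathrm{fol}^{(4)}\rangle\bigr)$, where $H^{(4)}$ is the mean curvature of the leaf in $\mathbb{R}^4$. A leaf is doubly rotationally symmetric (independent rotations in the $(x_1,x_2)$- and $(x_3,x_4)$-planes), so its three principal curvatures decouple into (i) the profile curvature of the shifted ADS profile in the $(r_{12},r_{34})$-plane, (ii) $\hat n_r/\hat r$ from the original ADS rotation, and (iii) $\hat n_1/r_{12}$ from the new $\theta$-rotation of radius $r_{12} = \hat x_1 + 1$. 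The first two combine to the ADS mean curvature $\hat H$ of the leaf in $\mathbb{R}^3$, so
\begin{equation}
H^{(4)} = \hat H + \frac{\hat n_1}{r_{12}}.
\end{equation}
A similarly short computation gives $\langle x, \nu_\mathrm{fol}^{(4)}\rangle = \langle \hat x, \hat n\rangle + \hat n_1$. Combining these and using the ADS shrinker equation $\hat H - \tfrac{1}{2}\langle \hat x, \hat n\rangle = 0$, which is precisely the content of \eqref{eq_calibration}, one gets
\begin{equation}
H^{(4)} - \tfrac{1}{2}\langle x, \nu_\mathrm{fol}^{(4)}\rangle = \hat n_1 \cdot \frac{2 - r_{12}}{2 r_{12}}.
\end{equation}

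Finally I would conclude by a sign analysis. Requiring $L_1 > 2$ (in addition to $L_1 \ge L_0 + 1$), the factor $(2 - r_{12})$ is strictly negative on $\Omega$. It remains to determine the sign of $\hat n_1$ on each type of leaf. On the ADS leaves $\Sigma_a$ (which lie inside the cylinder), the profile $r = u_a(\hat x_1)$ is decreasing along the descending portion $\hat x_1 \ge L_0$, so the outward unit normal $\hat n = (-u_a', 1)/\sqrt{1 + u_a'^2}$ satisfies $\hat n_1 > 0$; thus $H^{(4)} - \tfrac{1}{2}\langle x,\nu_\mathrm{fol}^{(4)}\rangle < 0$, giving \eqref{negative_div}. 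On the KM leaves $\tilde\Sigma_b$ (outside the cylinder), the profile $\tilde u_b$ is increasing, so $\hat n_1 < 0$ and the opposite sign is obtained, giving \eqref{positive_div}. On the cylinder itself both factors vanish (and in fact $\hat n_1 = 0$). The main bookkeeping obstacle is keeping the orientation of $\nu_\mathrm{fol}$ consistent across the whole foliation and correctly identifying the sign of $\hat n_1$ on each family; once the sign of $\hat n_1$ is determined and matched with the sign of $(2 - r_{12})$, the lemma follows.
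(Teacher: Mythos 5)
Your proposal is correct and follows essentially the same approach as the paper: express $\nu_{\mathrm{fol}}$ and $H$ of the rotated leaves in terms of the ADS data, apply the shrinker equation to reduce the divergence to $\hat n_1 \cdot \tfrac{2-r_{12}}{2r_{12}}$, and determine the signs from concavity/convexity of the ADS profiles and $r_{12}\geq L_1 > 2$. The only difference is presentational — you keep the general $\theta$ and write the formula in doubly-rotationally-symmetric notation, whereas the paper restricts to the slice $\{x_2 = 0,\, x_1 > 0\}$ by symmetry and works there — but both yield the identical key identity and the identical sign argument.
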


\begin{proof}
Let $b_0$ be as in Lemma \ref{lower_d_ADS}, and set $L_1=L_0+1$. The fact that $\Gamma_a$, ${\tilde{\Gamma}}_b$ and ${\Gamma}$ foliate ${\Omega}$ follows from Lemma \ref{lower_d_ADS} and Definition \ref{new_fol_def}.

Now, observe that for every element ${\Gamma}_{*}$ in the foliation of ${\Omega}$ we have
\begin{equation}
\mathrm{div}({\nu}_{\mathrm{fol}}e^{-|x|^2/4})=\big(H_{{\Gamma}_*}-\tfrac{1}{2}\langle x,\nu_{\mathrm{fol}} \rangle\big)e^{-|x|^2/4}.
\end{equation}
Hence, \eqref{negative_div} is equivalent to the condition
\begin{equation}\label{to_show_negative_div}
H_{{\Gamma}_*} \leq \tfrac{1}{2}\langle x,\nu_{\mathrm{fol}} \rangle.
\end{equation}
To show \eqref{to_show_negative_div}, note that by symmetry, it suffices to compute the curvatures $H_{{\Gamma}_{*}}$ of ${\Gamma}_{*}$ in the region $\{x_2=0,\;x_1>0\}$, where we can identify points and unit normals in ${\Gamma}_{*}$ with the corresponding ones in ${\Sigma}_*$, by disregarding the $x_2$-component. 
The relation between the mean curvature  of a surface ${\Sigma}_{*}\subset\mathbb{R}^3$ and its (unshifted) rotation ${\Gamma}_{*}\subset\mathbb{R}^4$ on points with $x_2=0$ and $x_1>0$ is given by 
\begin{equation}\label{rotate_H}
H_{{\Gamma}_{*}}=H_{{\Sigma}_{*}}+\frac{1}{x_1}\langle e_1,\nu \rangle,
\end{equation}  
where $e_1=(1,0,0)$.

For the ADS-shrinkers, the concavity of $u_a$ implies $\langle e_1,\nu_{\mathrm{fol}} \rangle\geq 0$, so using \eqref{rotate_H} and the shrinker equation we infer that
\begin{equation}\label{rot_cal}
H_{{{\Gamma}}_a}
= \frac{1}{2}\langle x-e_{1},\nu_{\mathrm{fol}} \rangle+\frac{1}{x_1-1} \langle e_1,\nu_{\mathrm{fol}} \rangle \leq \frac{1}{2}\langle x, \nu_{\mathrm{fol}}\rangle,  
\end{equation}
as in $\Omega\cap \{x_2=0,\;x_1>0\}$,  we have $x_1 \geq L_1 \geq 3$.

For the KM-shrinkers, the convexity of $\tilde{u}_b$ implies that $\langle e_1,\nu_{\mathrm{fol}} \rangle\leq  0$, so a similar calculation as in \eqref{rot_cal} gives
\begin{equation}
H_{{\tilde{\Gamma}}_b} \geq \frac{1}{2}\langle x, \nu_{\mathrm{fol}}\rangle.
\end{equation} 
This proves the lemma.
\end{proof}

\begin{corollary}[Inner Barriers]
The hypersurfaces ${\Gamma}_a$ are inner barriers for the renormalized mean curvature flow, in the following sense: Assume $\{K_{\tau}\}_{\tau\in [\tau_1,\tau_2]}$ are compact domains, the boundary of which evolve by renormalized mean curvature flow. Assume further that $K_{\tau}$  for every $\tau\in [\tau_1,\tau_2]$ contains the region bounded by ${\Gamma}_a$ and  $x_1^2+x_2^2=L_1^2$,  and   that $\partial K_{\tau}\cap {\Gamma}_a=\emptyset$ for all $\tau<\tau_2$. Then 
\begin{equation}
\partial K_{\tau_2}\cap {\Gamma}_a\subseteq \partial {\Gamma}_a.
\end{equation} 
\end{corollary}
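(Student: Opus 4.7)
The plan is to run a first-touching argument combined with the strong parabolic maximum principle. Suppose for contradiction that there is a point $p_0 \in (\partial K_{\tau_2} \cap \Gamma_a) \setminus \partial \Gamma_a$. The hypotheses give $\Gamma_a \subseteq K_\tau$ for all $\tau \in [\tau_1, \tau_2]$ and $\Gamma_a \cap \partial K_\tau = \emptyset$ for $\tau < \tau_2$, so $(p_0, \tau_2)$ is a first spacetime contact of $\Gamma_a$ with $\partial K_\tau$, at an interior (smooth) point of $\Gamma_a$. My first task is to extract pointwise information at this contact. Tangential touching of two smooth hypersurfaces, with $\Gamma_a$ locally sitting on the $K_{\tau_2}$-side of $\partial K_{\tau_2}$, yields agreement of outward unit normals, $\nu_{\partial K_{\tau_2}}(p_0) = \nu_{\mathrm{fol}}(p_0)$, together with the standard second-fundamental-form comparison: the interior leaf has the larger mean curvature,
$$H_{\Gamma_a}(p_0) \geq H_{\partial K_{\tau_2}}(p_0).$$
Since $\tau_2$ is the first time of contact, the outward rescaled-MCF normal velocity of $\partial K_\tau$ at $(p_0, \tau_2)$ must be non-positive, i.e., $H_{\partial K_{\tau_2}}(p_0) \geq \tfrac{1}{2}\langle p_0, \nu_{\partial K_{\tau_2}}(p_0)\rangle$.

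On the other hand, Lemma \ref{foli_lemma}, applied to the leaf $\Gamma_a$ which sits on the inside-of-cylinder side of the foliation, is by direct unwinding equivalent to $H_{\Gamma_a}(p_0) \leq \tfrac{1}{2}\langle p_0, \nu_{\mathrm{fol}}(p_0)\rangle$. Chaining the three (in)equalities together with the normal agreement forces equality throughout,
$$H_{\Gamma_a}(p_0) \,=\, H_{\partial K_{\tau_2}}(p_0) \,=\, \tfrac{1}{2}\langle p_0, \nu_{\mathrm{fol}}(p_0)\rangle,$$
so every comparison in sight is sharp at $(p_0, \tau_2)$ and the rescaled-MCF speed of $\partial K_\tau$ at that point vanishes.

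The final step is to upgrade this pointwise degeneracy to a contradiction via the strong parabolic maximum principle. Locally near $p_0$, represent $\partial K_\tau$ and $\Gamma_a$ as graphs $f_K(\cdot, \tau)$ and $f_\Gamma(\cdot)$ over their common tangent plane in direction $\nu_{\mathrm{fol}}$, oriented so that the containment $\Gamma_a \subseteq K_\tau$ translates into $f_K \geq f_\Gamma$. Then $w := f_K - f_\Gamma$ is non-negative on a small backward parabolic cylinder around $(p_0, \tau_2)$ and vanishes at the tip. Using that $f_K$ evolves by rescaled MCF while $f_\Gamma$ is a (weak) subsolution of the same equation by the pointwise inequality \eqref{negative_div}, linearizing the rescaled-MCF operator around $f_\Gamma$ yields a uniformly parabolic linear inequality $(\partial_\tau - L)w \geq 0$ on that cylinder. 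The strong maximum principle then forces $w \equiv 0$ in a full backward spacetime neighborhood of $(p_0, \tau_2)$, i.e., $\partial K_\tau$ and $\Gamma_a$ coincide on an open set for $\tau$ slightly less than $\tau_2$, contradicting the hypothesis $\partial K_\tau \cap \Gamma_a = \emptyset$ for $\tau < \tau_2$. The main delicate point is the bookkeeping in the linearization and verifying uniform parabolicity of $L$; both are routine for the MCF operator on smooth tangential graphs, and the correct sign of the subsolution residue is exactly what Lemma \ref{foli_lemma} supplies.
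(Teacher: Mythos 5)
Your proposal is correct and is essentially a detailed unwinding of the paper's one-line proof, which simply notes that Lemma \ref{foli_lemma} makes the vector $\vec{H}+\tfrac{x^\perp}{2}$ point outward of $\Gamma_a$ (i.e.\ $\Gamma_a$ is a static subsolution) and then invokes the maximum principle. The preliminary weak-maximum-principle chaining at the contact point $p_0$, which establishes equality of curvatures, is logically redundant once you apply the strong parabolic maximum principle to $w=f_K-f_\Gamma$, but it does no harm and the conclusion stands.
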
\label{lemma_inner_barrier}

\begin{proof}
Lemma \ref{foli_lemma} implies that the vector $\vec{H}+\frac{x^{\perp}}{2}$
points outwards of ${\Gamma}_a$. The result now follows from the maximum principle.
\end{proof}

\begin{remark}[Outer Barriers]
Although this is not needed in the convex case, the hypersurfaces $\tilde{{\Gamma}}_b$ are evidently outer barriers for the renormalized mean curvature flow.  
\end{remark}

\bigskip

\section{Setting up the fine bubble-sheet analysis}\label{Sec_set_up}

Throughout this section, let $\mathcal{M}$ be a noncollapsed ancient flow in $\mathbb{R}^4$ whose tangent flow for $t\to -\infty$ is a bubble-sheet. Assume further that $\mathcal{M}$ is not self-similarly shrinking. Given any space-time point $X_0$, we consider the renormalized flow
\begin{equation}
\bar M^{X_0}_\tau = e^{\frac{\tau}{2}} \, \left( M_{t_0-e^{-\tau}} - x_0\right).
\end{equation}
Then, $\bar M^{X_0}_\tau$ converges to the cylinder
\begin{equation}
\Gamma=\mathbb{R}^2\times S^1(\sqrt{2})
\end{equation}
as $\tau\rightarrow -\infty$. \\

Since the renormalized MCF is invariant under rotations,  the corresponding rotation vector fields appear as Jacobi fields in its  linearization. Therefore, to obtain a useful geometric information from the spectral analysis in the case where the neutral mode is dominant, one needs to make sure that those rotation vector fields are not the dominant ones. There are two ways that have been successfully employed in doing that: 
\begin{enumerate}
\item using a neck-improvement theorem to show that the rotations are nondominant (c.f. \cite[Sec. 4.3]{CHH}), or
\item rotating the evolution in such a way that the graphs are orthogonal to all rotations (c.f. \cite[Sec. 2]{BC}).
\end{enumerate}
The difference between the two methods, in terms of the argument, is where those rotations are dealt with: in the latter approach, the labor lies in showing that the modes of the linearization dominate the evolution even after one rotates the surfaces. In the former approach, the analysis in the neutral mode case is harder.

In  our current  setting, we have found it easier to use the second alternative, as in \cite{BC}.   \\

After normalizing we can assume that $Z(X_0)\leq 1$, and we can find a universal function $\rho(\tau)>0$ with
\begin{equation}\label{univ_fns}
\lim_{\tau \to -\infty} \rho(\tau)=\infty, \quad \textrm{and} -\rho(\tau) \leq \rho'(\tau) \leq 0,
\end{equation}
such that for every $S\in \mathrm{SO}(4)$ with $\sphericalangle(S(\Gamma),\Gamma)<\frac{1}{100} \rho(\tau)^{-3}$ the rotated surface $S(\bar{M}^{X_0}_\tau)$ is the graph of a function $u=u_S(\cdot,\tau)$ over $\Gamma \cap B_{2\rho(\tau)}(0)$, namely 
\begin{equation}
\{x + u(x,\tau) \nu_\Gamma(x): x \in \Gamma \cap B_{2\rho(\tau)}(0)\} \subset S(\bar{M}^{{X_0}}_\tau),
\end{equation} 
where $\nu_\Gamma$ denotes the outward pointing unit normal to $\Gamma$, such that
\begin{equation}\label{small_graph}
\|u(\cdot,\tau)\|_{C^4(\Gamma \cap B_{2\rho(\tau)}(0))} \leq \rho(\tau)^{-2}.
\end{equation}
For later use, let us also arrange that in the special case where the rotation matrix is the identity we have the better decay
\begin{equation}\label{smaller_graph}
\|u_{I}(\cdot,\tau)\|_{C^4(\Gamma \cap B_{2\rho(\tau)}(0))} \leq \rho(\tau)^{-4}.
\end{equation}
\bigskip

We fix a nonnegative smooth cutoff function $\chi$ satisfying $\chi(s)=1$ for $|s| \leq \frac{1}{2}$ and $\chi(s)=0$ for $|s| \geq 1$. We consider the truncated function
\begin{equation}
\hat{u}(x,\tau):=u(x,\tau) \chi\left(\frac{r}{\rho(\tau)}\right),
\end{equation}
where 
\begin{equation}\label{r_def}
r(x):=\sqrt{x_1^2+x_2^2}.
\end{equation}

\begin{proposition}[Orthogonality]\label{orthogonality}
There exists a differentiable function $S(\tau):=S^{X_0}(\tau)\in \mathrm{SO}(4)$, defined for $\tau$ sufficiently negative, such that, setting $u:=u_{S^{X_0}(\tau)}$,  we have
\begin{equation}
\int_{\Gamma\cap B_{\rho(\tau)}(0)} \langle Ax,\nu_\Gamma\rangle \hat{u}(x,\tau) e^{-\frac{|x|^2}{4}}=0,
\end{equation}
for every $A\in \mathrm{o}(4)$. Moreover, the matrix $A(\tau)=S'(\tau)S(\tau)^{-1}\in \mathrm{o}(4)$ satisfies $A_{12}(\tau)=0$ and $A_{34}(\tau)=0$ for all $\tau\ll 0$. 
\end{proposition}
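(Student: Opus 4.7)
The plan is a two-step construction combining an implicit function theorem argument with a gauge-fixing step. I decompose $\mathrm{o}(4) = \mathfrak{t} \oplus \mathfrak{h}$, where $\mathfrak{t}$ is the 2-dimensional subspace of matrices with all entries zero except possibly $A_{12}$ and $A_{34}$, and $\mathfrak{h}$ is its 4-dimensional complement (matrices with $A_{12} = A_{34} = 0$). The elements of $\mathfrak{t}$ generate the symmetry group of $\Gamma = \mathbb{R}^2 \times S^1(\sqrt{2})$, so $\langle Ax, \nu_\Gamma\rangle \equiv 0$ on $\Gamma$ for $A \in \mathfrak{t}$, and those two of the six orthogonality conditions are automatic. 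Parametrizing $\Gamma$ by $(x_1, x_2, \sqrt 2 \cos\theta, \sqrt 2 \sin\theta)$ and denoting by $E_{ij}$ the antisymmetric matrix generating rotation in the $(x_i, x_j)$-plane, a direct computation gives $\langle E_{ij}x, \nu_\Gamma\rangle = -x_i \cos\theta$ or $-x_i \sin\theta$ for the four generators $E_{13}, E_{14}, E_{23}, E_{24}$ of $\mathfrak{h}$. The desired conclusion $A_{12}(\tau) = A_{34}(\tau) = 0$ is therefore equivalent to $S'(\tau) S(\tau)^{-1} \in \mathfrak{h}$.

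Stage 1 (orthogonality via IFT). I will assemble the four nontrivial integrals into a map $F : \mathrm{SO}(4) \times \mathbb{R} \to \mathbb{R}^4$. The linearization at $S = I$, $u = 0$ in the direction $B \in \mathfrak{h}$ equals
\[
\int_{\Gamma \cap B_{\rho(\tau)}(0)} \langle Ax, \nu_\Gamma\rangle \langle Bx, \nu_\Gamma\rangle \, \chi(r/\rho) \, e^{-|x|^2/4},
\]
since infinitesimally rotating $\Gamma$ by $e^{sB}$ produces a graph with function $s\langle Bx, \nu_\Gamma\rangle + O(s^2)$. Because the four functions $x_1 \cos\theta, x_1 \sin\theta, x_2 \cos\theta, x_2 \sin\theta$ are mutually Gaussian-$L^2$-orthogonal on $\Gamma$ with positive norms, this matrix converges, as $\rho \to \infty$, to an explicit positive diagonal $4 \times 4$ matrix. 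Combined with $\|u_I(\cdot, \tau)\|_{C^4} \to 0$ from \eqref{small_graph}, the smooth dependence of $u_S$ on $S$, and the quantitative implicit function theorem, this yields, for each $\tau \ll 0$, a unique small $B_0(\tau) \in \mathfrak{h}$ with $F(\exp(B_0(\tau)), \tau) = 0$; set $S_0(\tau) := \exp(B_0(\tau))$, differentiable in $\tau$.

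Stage 2 (gauge-fixing). Since $e^C$ for $C \in \mathfrak{t}$ is block-diagonal in the $\{1,2\} \cup \{3,4\}$ decomposition, the subspace $\mathfrak{h}$ is $\mathrm{Ad}(e^C)$-invariant. The change of variables $y = e^{-C}x$ in the defining integral (using that $e^C$ preserves $\Gamma$ and its normal bundle) yields $F_A(e^C S, \tau) = F_{\mathrm{Ad}(e^{-C})A}(S, \tau)$ for $A \in \mathfrak{h}$, so orthogonality is preserved under $S \mapsto e^C S$. I decompose $S_0'(\tau) S_0(\tau)^{-1} = X_{\mathfrak{h}}(\tau) + X_{\mathfrak{t}}(\tau)$ along $\mathfrak{h} \oplus \mathfrak{t}$ and set $C(\tau) := -\int_{\tau_\ast}^\tau X_{\mathfrak{t}}(s)\,ds \in \mathfrak{t}$ for a reference time $\tau_\ast \ll 0$; then let $S^{X_0}(\tau) := e^{C(\tau)} S_0(\tau)$. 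Orthogonality persists by the invariance observation. Since $\mathfrak{t}$ is abelian, $C(\tau)$ and $C'(\tau)$ commute and $\tfrac{d}{d\tau} e^{C(\tau)} = C'(\tau) e^{C(\tau)}$, whence
\[
S'(\tau) S(\tau)^{-1} = C'(\tau) + \mathrm{Ad}(e^{C(\tau)})\big(S_0' S_0^{-1}\big) = -X_{\mathfrak{t}} + X_{\mathfrak{t}} + \mathrm{Ad}(e^{C(\tau)}) X_{\mathfrak{h}} \in \mathfrak{h},
\]
using that $\mathrm{Ad}(e^C)$ acts trivially on the abelian $\mathfrak{t}$ and preserves $\mathfrak{h}$. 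This proves $A_{12}(\tau) = A_{34}(\tau) = 0$. The main technical hurdle is the uniform-in-$\tau$ invertibility of the Stage 1 linearization, which reduces to the explicit positive-diagonal Gaussian matrix above together with careful estimates on the truncation and $u$-dependent error terms, both of which vanish as $\tau \to -\infty$ by \eqref{univ_fns} and \eqref{small_graph}.
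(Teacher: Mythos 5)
Your two-stage structure (IFT for the rotation, then gauge-fixing to kill $A_{12},A_{34}$) matches the paper's overall plan, and your Stage~2 is correct and in fact more explicit than the paper's one-line remark about post-composing with $\mathrm{SO}(2)\times\mathrm{SO}(2)$. The decomposition $\mathrm{o}(4)=\mathfrak{t}\oplus\mathfrak{h}$ is exactly the paper's subspace $\mathcal{A}=\{A: A_{12}=A_{34}=0\}$, and your calculation of $\langle Ax,\nu_\Gamma\rangle$ and the positive-definiteness of the linearization are the same as in the paper.

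However, Stage~1 has a genuine gap: you apply the quantitative IFT starting from the identity $S=I$, but you never check that $S=I$ is a good enough approximate solution for the IFT to close. A priori the only available bound is $\|u_I(\cdot,\tau)\|_{C^4}\leq\rho(\tau)^{-2}$, which gives $|F_A(I,\tau)|\lesssim\|\hat u_I\|_{\mathcal H}\lesssim\rho^{-2}$ — and this is sharp in general, since the functions $x_i\cos\theta,x_i\sin\theta$ lie in the neutral eigenspace and may well dominate $P_0\hat u_I$. Meanwhile, the domain on which the graph function $u_S$ is defined with the required $C^4$ control is only the $\tfrac{1}{100}\rho(\tau)^{-3}$-angular neighbourhood of the identity (this is built into the definition of $\rho(\tau)$ in \eqref{small_graph}), so the linearization estimate and its bounded inverse are only available on a ball of radius $\sim\rho^{-3}$. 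The quantitative IFT then requires roughly $\|DF^{-1}\|\cdot|F(I)|\lesssim\rho^{-3}$, i.e. $\rho^{-2}\lesssim\rho^{-3}$, which fails as $\rho\to\infty$; equivalently, the putative zero $\exp(B_0)$, which your own Newton-step estimate places at distance $\sim\rho^{-2}$ from $I$, lands outside the set where $u_S$ is controlled. The paper's proof resolves exactly this point: before invoking the IFT it first minimizes the functional $H(\tau,\pi)=\tfrac12\int_{\Gamma\cap B_\rho}e^{-|x|^2/4}u_\pi^2\chi(r/\rho)$ over nearby axes $\pi$ and shows, via the Euler--Lagrange equation and the $O(|x|/\rho^2)$ error in $\tfrac{d}{d\eta}u_{R(\eta)}$, that the minimizer satisfies $F(\pi(\tau))=O(\rho^{-4})$. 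Rotating coordinates so the minimizer sits at $[x_1x_2]$ and then applying the quantitative IFT at this much better approximate solution makes the scheme close, since $\rho^{-4}\ll\rho^{-3}$. This intermediate minimization step is not a cosmetic detail but the crucial ingredient that makes Stage~1 valid, and it is absent from your argument.
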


\begin{proof}
Let $\textrm{Gr}(2,\mathbb{R}^4)$ be the space of $2$-dimensional planes through the origin in $\mathbb{R}^4$. The rotation group $\mathrm{O}(4)$ acts transitively on $\textrm{Gr}(2,\mathbb{R}^4)$ with stabilizer $\mathrm{O}(2)\times \mathrm{O}(2)$, and hence the Grassmannian can be expressed as homogeneous space
\begin{equation}
\textrm{Gr}(2,\mathbb{R}^4)=\frac{\mathrm{O}(4)}{\mathrm{O}(2)\times \mathrm{O}(2)}.
\end{equation}
In particular, $\dim\textrm{Gr}(2,\mathbb{R}^4)=6-2=4$. Let us select an explicit choice of fibration
\begin{equation}
p:\mathrm{O}(4)\rightarrow \textrm{Gr}(2,\mathbb{R}^4),
\end{equation}
by declaring that $p$ maps each rotation matrix to the span of its first two column vectors.

Denote by $U_\tau$ the open set of all $R\in  \mathrm{O}(4)$ satisfying $\sphericalangle(R(\Gamma),\Gamma)<\tfrac{1}{100}\rho(\tau)^{-3}$. Now, given any $\tau\ll 0$ and $R\in U_\tau$ we can write $R(\bar{M}_{\tau})$ as a graph of a function $u_R(x,\tau)$ over $\Gamma\cap B_{2\rho(\tau)}(0)$. Observe that the expression
\begin{equation}
\int_{\Gamma\cap B_{\rho(\tau)}(0)}e^{-|x|^2/4} u_R^2(x,\tau)\chi\!\left( \frac{r}{\rho(\tau)} \right),
\end{equation}
 as well as the relation
\begin{equation}\label{relation_der}
\int_{\Gamma\cap B_{\rho(\tau)}(0)}e^{-|x|^2/4} \langle Ax,\nu_{\Gamma} \rangle u_R(x,\tau)\chi\!\left( \frac{r}{\rho(\tau)} \right)=0\;\;\; \forall A\in o(4),
\end{equation}
is constant along the fibers of $p$. Set $V_\tau:=p(U_\tau)$, and observe that this is an open neighborhood of $[x_1x_2]\in\textrm{Gr}(2,\mathbb{R}^4)$. 

\begin{claim}\label{gras_prop}
Possibly after decreasing $U_\tau$, for every $\tau\ll 0$ there exists a unique $\pi(\tau)\in V_\tau$ such that  
\begin{equation}\label{opt_choice_eq}
\int_{\Gamma\cap B_{\rho(\tau)}(0)}e^{-|x|^2/4} \langle Ax,\nu_{\Gamma} \rangle u_{\tilde{\pi}(\tau)}(x,\tau)\chi\!\left( \frac{r}{\rho(\tau)} \right)=0\;\;\; \forall A\in \mathrm{o}(4),
\end{equation}
where $\tilde{\pi}(\tau)$ denotes some lift of $\pi(\tau)$ to $\mathrm{O}(4)$. Moreover, the function $\tau\mapsto \pi(\tau)$ is smooth. 
\end{claim}  

\begin{proof}[Proof of the claim]We will use the quantitative version of the implicit function theorem.\\

In order to first construct the required approximate solution, we 
consider the functional
\begin{equation}
H(\tau,\pi)=\frac{1}{2}\int_{\Gamma\cap B_{\rho(\tau)}(0)}e^{-|x|^2/4} u_\pi^2(x,\tau)\chi\!\left( \frac{r}{\rho(\tau)} \right). 
\end{equation}
Since $\bar{M}_{\tau}$ is a small graph over the cylinder with axis given by the 2-plane $[x_1x_2]\in\textrm{Gr}(2,\mathbb{R}^4)$, for every $\tau\ll 0$ there is a minimizer $\pi(\tau)$ for $H$ in a  small neighbourhood of $[x_1x_2]$, and we have $H(\tau,\pi(\tau))\leq C/\rho(\tau)^4$. Here, using \eqref{smaller_graph} it is not hard to see that the minimum is indeed attained (well inside) the open set $V_\tau$.\\

Fix some $\tau\ll 0$ and abbreviate $\tilde{\pi}:=\tilde{\pi}(\tau)$.
Now, let $R(\eta)\in \mathrm{SO}(4)$ be a one-parameter family of rotations with $R(0)=\tilde{\pi}$ and $R'(0)=A\tilde{\pi}$, where $A\in \mathrm{o}(4)$. Note that
\begin{equation}
R(\eta)^{-1}(x+u_{R(\eta)}(x,\tau)\nu_{\Gamma}(x))\in \bar{M}_\tau.
\end{equation}
Taking $\tfrac{d}{d\eta}|_{\eta=0}$ of this expression, observing the the resulting vector is of course orthogonal to $\nu(x+u_{\tilde \pi}\nu_\Gamma)$, and using the condition $\sphericalangle(\tilde{\pi}(\Gamma),\Gamma)<\tfrac{1}{100}\rho(\tau)^{-3}$, we infer that
\begin{equation}
\frac{d}{d\eta}|_{\eta=0} u_{R(\eta)}(x,\tau) = \langle Ax,\nu_{\Gamma}\rangle + O\left(\frac{1+|x|}{\rho(\tau)^2}\right).
\end{equation}
Using this, we obtain that the Euler-Lagrange equation for $H$ reads
\begin{align}\label{ELH}
\nabla_{A}H&=\int_{\Gamma\cap B_{\rho(\tau)}(0)}e^{-|x|^2/4} u_\pi(x,\tau)\langle Ax,\nu_{\Gamma} \rangle\chi\!\left( \frac{r}{\rho(\tau)} \right)+O\left(\frac{1}{\rho(\tau)^4}\right)\\
&=0 \;\;\; \forall A\in o(4).\nonumber
\end{align}
Thus, our minimizer $\pi(\tau)$ is an approximate solution of \eqref{opt_choice_eq}.\\

Now, to show existence and uniqueness of solutions to \eqref{opt_choice_eq}, as well as smooth dependence, let $A^1,\ldots A^4$ be the standard basis of
\begin{equation}
\mathcal{A}:=\{A\in so(4)\;|\; A_{12}=A_{34}=0\},
\end{equation}
and  define a map $F:\{(\tau,\pi)\, | \, \tau\leq \tau_0,\, \pi\in V_\tau\} \rightarrow \mathbb{R}^4$ by 
\begin{equation}
F(\tau,\pi)_i=\int_{\Gamma\cap B_{\rho(\tau)}(0)}e^{-|x|^2/4} \langle A^ix,\nu_{\Gamma} \rangle u_\pi(x,\tau)\chi\!\left( \frac{r}{\rho(\tau)} \right)
\end{equation}
for $i=1,\ldots,4$. Then, computing similarly as above we obtain
\begin{equation}\label{der_est}
\nabla_{A^i}F_j=\int_{\Gamma\cap B_{\rho(\tau)}(0)}e^{-|x|^2/4}\langle A^ix, \nu_{\Gamma} \rangle\langle A^jx, \nu_{\Gamma} \rangle\chi\!\left( \frac{r}{\rho(\tau)} \right)+O\left(\frac{1}{\rho(\tau)^{2}}\right).
\end{equation}
Since the only anti-symmetric matrix in $A\in \mathcal{A}$ such that $\langle Ax, \nu_{\Gamma} \rangle\equiv 0$ on $\Gamma$ is $0$, we see that the form $\nabla_{A^i}F_j$ is positive definite for $\tau\ll 0$, and so is in particular invertible.

Note that equation \eqref{der_est} holds in an $\rho(\tau)^{-3}/100$ neighborhood of $[x_1x_2]$ with uniform constants. Thus by the quantitative version of the inverse function theorem, in light of  \eqref{ELH} and \eqref{der_est},   there is a neighborhood of our fixed time $\tau$ such that the function $\pi(\cdot)$ can be chosen in it to satisfy \eqref{opt_choice_eq}. Finally, since by the above $F$ is uniformly bounded in spatial $C^1$, the size of that neighbourhood can be taken to be independent of $\tau$.  This finishes the proof of the claim. 
\end{proof}

Now, we can take a lift $S(\tau)$ of $\pi(\tau)$ to $\mathrm{SO}(4)$. Moreover, by post-composing with two $\mathrm{SO}(2)$ rotations, of the first two variables, and of the last two variables, we can further arrange that $A=S'S^{-1}$ satisfies $A_{12}=A_{34}=0$. This finishes the proof of the proposition.
\end{proof}

We now set
\begin{equation}
\tilde{M}_\tau^{X_0}=S^{X_0}(\tau)\bar M_\tau^{X_0},
\end{equation}
where $S^{X_0}(\tau)\in \mathrm{SO}(4)$ if from Proposition \ref{orthogonality} (Orthogonality), and set
\begin{equation}
u:=u_{S^{X_0}(\tau)},
\end{equation}
so $u$ describes $\tilde{M}_\tau^{X_0}$ as a graph over the cylinder $\Gamma$. Recall that we defined
\begin{equation}
\hat{u}(x,\tau)=u(x,\tau) \chi\left(\frac{r}{\rho(\tau)}\right).\\
\end{equation}

Our next task it to show that, despite of the truncation and the rotation, the function $\hat{u}$ evolves by the linearization of the rescaled MCF equation over the cylinder, up to a small error. Set 
\begin{equation}\label{r_grad}
e_r:=\nabla r=\frac{(x_1,x_2,0,0)}{r}.
\end{equation}

Let $\Delta_{\tau}=\Delta^{+}_{\tau}\cup \Delta^{-}_{\tau}$  be the region bounded by $\tilde{M}_{\tau}$ and $\Gamma$. Here, $\Delta^{+}_\tau$ denotes the region that is outside of $\Gamma$ and inside of $\tilde{M}_\tau$, and $\Delta^{-}_\tau$ denotes the region that is inside of $\Gamma$ outside of $\tilde{M}_\tau$.

\begin{proposition}[Gaussian area]\label{prop_calibration}
\label{calibraion}
For all $L \in [L_1,\rho(\tau)]$ and $\tau \ll 0$ we have the Gaussian area estimate
\begin{multline}
\int_{\tilde{M}_\tau \cap \{ r \geq L  \}} \, e^{-\frac{|x|^2}{4}} - \int_{\Gamma \cap \{r \geq L  \}} \, e^{-\frac{|x|^2}{4}}
\geq - \int_{\Delta_\tau\cap \{r=L\} }  \, e^{-\frac{|x|^2}{4}}\, | \langle e_r , \nu_{\mathrm{fol}}\rangle |.
\end{multline}

\end{proposition}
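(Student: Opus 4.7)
The strategy is a Gaussian calibration argument built on the foliation from Lemma \ref{foli_lemma}. The key feature is that the vector field $X := \nu_{\mathrm{fol}}\, e^{-|x|^2/4}$ satisfies $\mathrm{div}(X) \leq 0$ strictly inside the cylinder $\Gamma$ and $\mathrm{div}(X) \geq 0$ strictly outside, while $X$ coincides with $\nu_\Gamma\, e^{-|x|^2/4}$ on $\Gamma$ itself; this makes $X$ a two-sided Gaussian calibration with $\Gamma$ as the center leaf.

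The first step is to split $\Delta_\tau \cap \{r \geq L\}$ into its interior part $\Delta_\tau^{\mathrm{in}}$ (where $x_3^2+x_4^2 < 2$) and exterior part $\Delta_\tau^{\mathrm{out}}$ (where $x_3^2+x_4^2 > 2$), and correspondingly partition $\tilde{M}_\tau \cap \{r \geq L\}$ and $\Gamma \cap \{r \geq L\}$ according to whether $\tilde{M}_\tau$ lies inside or outside $\Gamma$. The graphicality estimate \eqref{small_graph} ensures that $\tilde{M}_\tau$ is contained in the foliation domain $\Omega$, so $X$ is well-defined throughout $\Delta_\tau$.

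Next I would apply the divergence theorem to $X$ on each of $\Delta_\tau^{\mathrm{in}} \cap \{r \geq L\}$ and $\Delta_\tau^{\mathrm{out}} \cap \{r \geq L\}$ separately. On each region the boundary decomposes into a portion of $\tilde{M}_\tau$, a portion of $\Gamma$, and a cross-section at $\{r=L\}$. Using \eqref{negative_div} respectively \eqref{positive_div} for the interior integral, the identity $\nu_{\mathrm{fol}} = \nu_\Gamma$ on $\Gamma$, and the pointwise Cauchy--Schwarz bound $\langle \nu_{\mathrm{fol}}, \nu_{\tilde M}\rangle \leq 1$ on the $\tilde{M}_\tau$ boundary piece, the two resulting inequalities combine to give
\begin{equation*}
\int_{\tilde M_\tau \cap \{r \geq L\}} e^{-|x|^2/4} - \int_{\Gamma \cap \{r \geq L\}} e^{-|x|^2/4} \geq \int_{\Delta_\tau^{\mathrm{out}} \cap \{r=L\}} \langle \nu_{\mathrm{fol}}, e_r\rangle\, e^{-|x|^2/4} - \int_{\Delta_\tau^{\mathrm{in}} \cap \{r=L\}} \langle \nu_{\mathrm{fol}}, e_r\rangle\, e^{-|x|^2/4}.
\end{equation*}
Bounding each signed integrand from below by $-|\langle \nu_{\mathrm{fol}}, e_r\rangle|$ and recombining the two regions into $\Delta_\tau \cap \{r=L\}$ then yields the stated estimate.

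The main technical subtlety is that $\nu_{\mathrm{fol}}$ may only be Lipschitz across $\Gamma$, so a direct application of the divergence theorem on the entire slab $\Delta_\tau \cap \{r \geq L\}$ is not justified; splitting into the interior and exterior pieces sidesteps this issue, since on each component $\nu_{\mathrm{fol}}$ is smooth and the two normal contributions along $\Gamma$ add up exactly to the genuine cylindrical Gaussian area. One also needs the slice $\{r=L\}$ to meet $\Delta_\tau$ transversely to interpret the boundary term, which holds for almost every $L \in [L_1,\rho(\tau)]$ by Sard's theorem, and the full statement then follows by continuity in $L$.
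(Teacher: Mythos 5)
Your argument is essentially the same calibration computation as the paper's: decompose $\Delta_\tau$, $\tilde M_\tau$ and $\Gamma$ by the sign of the graph function, apply the divergence theorem with $\nu_{\mathrm{fol}}\,e^{-|x|^2/4}$ on each piece, use the sign of the divergence from Lemma~\ref{foli_lemma}, and combine via $\langle\nu,\nu_{\mathrm{fol}}\rangle\leq 1$. The one step you gloss over is that the paper first applies the divergence theorem on the bounded slab $\{L\leq r\leq R\}$ and then sends $R\to\infty$, using the Gaussian weight together with the polynomial bound $|\Delta_\tau\cap\{r=R\}|\leq CR^3$ to kill the outer flux term, rather than invoking the divergence theorem directly on the unbounded region $\{r\geq L\}$.
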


\begin{proof}
Let $\tilde{M}^{+}_\tau$ (respectively $\tilde{M}^{-}_\tau$) be the part of $\tilde{M}_{\tau}$ that lies outside (respectively inside) the cylinder. As above, let $\Delta^{\pm}_{\tau}$ be the corresponding parts of $\Delta_{\tau}$, and let $\Gamma^{\pm}=\Delta^{\pm}_{\tau}\cap \Gamma$.\\
Considering the foliation of $\Omega$ from Lemma \ref{foli_lemma},
since $\mathrm{div}(e^{-\frac{|x|^2}{4}} \, \nu_{\text{\rm fol}}) \geq  0$ in $\Delta_{\tau}^{+}\cap\{ r\geq L \}$, the divergence theorem yields that for every $R>L$ we have
\begin{multline} 
\int_{\tilde{M}^{+}_\tau \cap \{L \leq r \leq R\}} e^{-\frac{|x|^2}{4}} \, \langle \nu,\nu_{\text{\rm fol}} \rangle - \int_{\Gamma^{+} \cap \{L \leq r \leq R\}} e^{-\frac{|x|^2}{4}} \\ 
\geq -\int_{\Delta^{+}_\tau \cap \{r=L\}} e^{-\frac{|x|^2}{4}} \, |\langle e_r,\nu_{\text{\rm fol}} \rangle| - \int_{\Delta^{+}_\tau \cap \{r=R\}} e^{-\frac{|x|^2}{4}} \, |\langle e_r,\nu_{\text{\rm fol}} \rangle|. 
\end{multline} 
Similarly, since $\mathrm{div}(e^{-\frac{|x|^2}{4}} \, \nu_{\text{\rm fol}}) \leq   0$ in $\Delta_{\tau}^{-}\cap\{r\geq L\}$, the divergence theorem yields
\begin{multline} 
\int_{\Gamma^{-} \cap \{L \leq r \leq R\}} e^{-\frac{|x|^2}{4}}-\int_{\tilde{M}^{-}_\tau \cap \{L \leq r \leq R\}} e^{-\frac{|x|^2}{4}} \, \langle \nu,\nu_{\text{\rm fol}} \rangle  \\ 
\leq \int_{\Delta^{-}_\tau \cap \{r=L\}} e^{-\frac{|x|^2}{4}} \, |\langle e_r,\nu_{\text{\rm fol}} \rangle| + \int_{\Delta^{-}_\tau \cap \{r=R\}} e^{-\frac{|x|^2}{4}} \, |\langle e_r,\nu_{\text{\rm fol}} \rangle|. 
\end{multline}
Putting these together, we get 
\begin{multline} 
\int_{\tilde{M}_\tau \cap \{L \leq r \leq R\}} e^{-\frac{|x|^2}{4}} \, \langle \nu,\nu_{\text{\rm fol}} \rangle - \int_{\Gamma \cap \{L \leq r \leq R\}} e^{-\frac{|x|^2}{4}} \\ 
\geq -\int_{\Delta_\tau \cap \{r=L\}} e^{-\frac{|x|^2}{4}} \, |\langle e_r,\nu_{\text{\rm fol}} \rangle| - \int_{\Delta_\tau \cap \{r=R\}} e^{-\frac{|x|^2}{4}} \, |\langle e_r,\nu_{\text{\rm fol}} \rangle|. 
\end{multline} 
Using $|\langle \nu,\nu_{\text{\rm fol}} \rangle | \leq 1$ and $|\Delta_\tau \cap \{r=R\}|\leq CR^3$  and passing  $R \to \infty$ we conclude that
\begin{equation}
\int_{\tilde{M}_\tau \cap \{r \geq L\}} e^{-\frac{|x|^2}{4}} - \int_{\Gamma \cap \{r \geq L\}} e^{-\frac{|x|^2}{4}} \geq -\int_{\Delta_\tau \cap \{r=L\}} e^{-\frac{|x|^2}{4}} \, |\langle e_r,\nu_{\text{\rm fol}} \rangle|.
\end{equation}
This proves the proposition.
\end{proof}

Next, we have an inverse Poincare inequality:

\begin{proposition}[Inverse Poincare inequality]\label{Gaussian density analysis}
The graph function $u$ satisfies the integral estimates
\begin{equation}
\int_{\Gamma \cap \{|r| \leq L\}} e^{-\frac{|x|^2}{4}} \, |\nabla u(x,\tau)|^2 \leq  C \int_{\Gamma \cap \{|r| \leq \frac{L}{2}\}} e^{-\frac{|x|^2}{4}} \, u(x,\tau)^2
\end{equation}
and 
\begin{equation}
\int_{\Gamma \cap \{\frac{L}{2} \leq |r| \leq L\}} e^{-\frac{|x|^2}{4}} \, u(x,\tau)^2 \leq \frac{C}{L^2} \int_{\Gamma \cap \{|r| \leq \frac{L}{2}\}} e^{-\frac{|x|^2}{4}} \, u(x,\tau)^2
\end{equation}
for all $L \in [L_1,\rho(\tau)]$ and $\tau \ll 0$, where $C<\infty$ is a numerical constant.
\end{proposition}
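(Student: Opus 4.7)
The plan is to derive both estimates by combining the Gaussian-area calibration of Proposition \ref{prop_calibration} with the Taylor expansion of the Gaussian area functional $F(M)=\int_M e^{-|x|^2/4}$ around the self-shrinker $\Gamma=\mathbb{R}^2\times S^1(\sqrt 2)$, following the scheme established in \cite{ADS} and adapted to the bubble-sheet setting. The key inputs are (i) the identity
\[
-\int u\,\mathcal{L}u\,e^{-|x|^2/4}\;=\;\int\!\bigl(|\nabla u|^2-u^2\bigr)e^{-|x|^2/4}+\mathrm{boundary},
\]
obtained by integration by parts, which identifies the Hessian of $F$ at $\Gamma$ with the stability operator $-\mathcal{L}$, and (ii) the smallness bound $\|u\|_{C^4}\leq \rho(\tau)^{-2}$ from \eqref{small_graph}, which allows absorbing all cubic and higher-order remainders in the expansion.

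First I would expand $F(\tilde M_\tau\cap\{r\geq \ell\})-F(\Gamma\cap\{r\geq \ell\})$ as a quadratic form in $u$ plus boundary and cubic error terms, the latter being of order $\rho^{-2}\|u\|_{L^2_w}^2$ and hence absorbable for $\tau\ll 0$. Combining with Proposition \ref{prop_calibration} then yields, for every $\ell\in[L_1,\rho(\tau)]$, the one-sided bound
\[
\int_{\Gamma\cap\{r\geq \ell\}} u^2\,e^{-|x|^2/4}\;\leq\;\int_{\Gamma\cap\{r\geq \ell\}}|\nabla u|^2\,e^{-|x|^2/4}\;+\;C\,\mathcal{B}(\ell),
\]
where the boundary term $\mathcal{B}(\ell)$ on $\{r=\ell\}$ decays exponentially in $\ell^{2}$.

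For the second inequality, I would apply the preceding bound at $\ell=L/2$ and $\ell=L$ and subtract, obtaining control of the annular $L^2_w$-norm of $u$ in terms of the annular $L^2_w$-norm of $|\nabla u|$ plus boundary contributions. Since $|\nabla u|=O(\rho^{-2})$ and the Gaussian weight $e^{-|x|^2/4}$ is of order $e^{-L^2/16}$ on the annulus $\{L/2\leq r\leq L\}$, the resulting annular bound is in fact exponentially small in $L$, which is much stronger than, and in particular implies, the claimed polynomial factor $C/L^2$ (the statement is phrased in this weaker, more convenient form).

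For the first inequality, I would use a Caccioppoli-type identity on $\Gamma$: multiplying $\mathcal{L}u$ by $u\phi^2 e^{-|x|^2/4}$ with $\phi=\chi(r/L)$ and integrating by parts via the self-adjointness of $\mathcal{L}$ in the Gaussian $L^2$ gives
\[
\int \phi^2|\nabla u|^2\,e^{-|x|^2/4}\;\leq\;\int\phi^2 u^2\,e^{-|x|^2/4}+C\!\int|\nabla\phi|^2 u^2\,e^{-|x|^2/4}+(\partial_\tau\text{-term})+(\text{nonlinear error}).
\]
The cutoff-gradient term is supported in $\{L/2\leq r\leq L\}$ and is absorbed by the second inequality already proved; the nonlinear error is $O(\rho^{-2})$ times manageable quantities by \eqref{small_graph}; and the $\partial_\tau$-term is the main obstacle. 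I would handle it by choosing the cutoff to be time-independent and either integrating over a bounded time interval (using the monotonicity of Huisken's Gaussian density for the rescaled flow to pass between time averages and pointwise values) or, alternatively, by running the argument with a slightly modified cutoff whose $\tau$-dependence is tuned to cancel the sign of the offending term. Either approach yields the stated pointwise-in-$\tau$ bound, and the constants depend only on the universal $\rho(\tau)$ from \eqref{univ_fns}.
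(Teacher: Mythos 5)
Your overall blueprint (calibration $+$ quadratic expansion of the Gaussian area around $\Gamma$) is close in spirit to the paper's approach, but your execution diverges in ways that introduce genuine gaps.

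\smallskip
\emph{The second estimate.} You argue that the annular $L^2_w$-norm of $u$ is exponentially small in $L$ (via the pointwise bound $|u|,|\nabla u|\lesssim\rho^{-2}$ and the Gaussian weight), and conclude that this ``implies the claimed polynomial factor $C/L^2$.'' This inference is not valid: the claimed inequality is \emph{relative}, comparing the annular $L^2_w$-mass of $u$ to $L^{-2}$ times the inner $L^2_w$-mass of $u$, and the latter also tends to $0$ as $\tau\to-\infty$ (indeed $\|u\|_{\mathcal H}\to 0$). A bound that is ``absolutely small in $L$'' does not control the \emph{ratio} of the two sides, so for fixed $L$ and $\tau\to-\infty$ your argument provides no information. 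The paper obtains the $L^{-2}$ factor from a very specific mechanism: the estimate $|\langle e_r,\nu_{\mathrm{fol}}\rangle|\leq CL^{-1}|x_3^2+x_4^2-2|$ of \cite[Lemma~4.11]{ADS}, which says the foliation leaves are nearly tangent to the cylinder at radius $L$ to order $L^{-1}$. Inserting this into the calibration inequality of Proposition~\ref{prop_calibration} turns the boundary term on $\{r=L\}$ into $CL^{-1}\int_{\{r=L\}}u^2 e^{-|x|^2/4}$; the $L^{-1}$ here (squared after the BC-style averaging in $L$) is the source of the $L^{-2}$. Your sketch never identifies this $L^{-1}$ gain and instead substitutes absolute smallness, which does not close the argument.

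\smallskip
\emph{The first estimate.} Here you depart from the paper's method entirely: the paper proves both estimates via a single-time-slice Gaussian-area computation (calibration on $\{r\geq L\}$ plus the second-order expansion of the weighted graph area on $\{r\leq L\}$, yielding the lower bound with $\frac{1}{C}|\nabla^\Gamma u|^2-Cu^2$ in the integrand), and never uses the evolution equation $\partial_\tau u=\mathcal{L}u+E$. You instead run a parabolic Caccioppoli estimate against the PDE, which immediately produces the term $-\tfrac12\frac{d}{d\tau}\int u^2\phi^2 e^{-|x|^2/4}$. You acknowledge this is ``the main obstacle'' and offer two possible fixes, neither of which is carried out: (a) integrating in $\tau$ gives a time-averaged estimate, whereas the Proposition is pointwise in $\tau$, and Huisken's monotonicity applies to $\int_{\tilde M_\tau}e^{-|x|^2/4}$, not to the cutoff quantity $\int u^2\phi^2e^{-|x|^2/4}$, so ``passing between time averages and pointwise values'' is not justified by the tools on hand; (b) tuning a $\tau$-dependent cutoff to ``cancel the sign of the offending term'' is a sign problem, not a magnitude problem, and the sign of $\frac{d}{d\tau}\int u^2\phi^2 e^{-|x|^2/4}$ is not controlled. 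As written, your proof of the first estimate does not close. The point the paper exploits is precisely that the calibration together with entropy monotonicity gives an \emph{elliptic} inequality at each fixed $\tau$, so that no $\partial_\tau$-term ever arises.

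\smallskip
In summary: identify the $L^{-1}$ gain in the foliation normal from \cite[Lemma~4.11]{ADS} (this is where the $L^{-2}$ comes from), and replace the parabolic Caccioppoli argument with the static Gaussian-area expansion on $\{r\leq L\}$ combined with the calibration on $\{r\geq L\}$, as in \cite[Prop.~2.3]{BC}.
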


\begin{proof}
The proof is quite similar to the one of Proposition 2.3 in \cite{BC}. Since $|\langle e_r,\nu_{\text{\rm fol}} \rangle| \leq C L^{-1} \, |x_3^2+x_4^2-2|$ by Lemma 4.11 in \cite{ADS}, we infer that
\begin{align} 
\int_{\Delta_\tau \cap \{r=L\}} e^{-\frac{|x|^2}{4}} \, |\langle e_r,\nu_{\text{\rm fol}} \rangle| 
&\leq   C L^{-1} \int_{\Gamma \cap \{r=L\}} e^{-\frac{|x|^2}{4}} \, u^2. 
\end{align}
Thus, Proposition \ref{prop_calibration} (Gaussian area) implies 
\begin{equation}
\int_{\tilde{M}_\tau \cap \{r \geq L\}} e^{-\frac{|x|^2}{4}} - \int_{\Gamma \cap \{r \geq L\}} e^{-\frac{|x|^2}{4}} \geq -C L^{-1} \int_{\Gamma \cap \{r=L\}} e^{-\frac{|x|^2}{4}} \, u^2 \,.
\end{equation}
On the other hand, we have
\begin{multline} 
\int_{\tilde{M}_\tau \cap \{r \leq L\}} e^{-\frac{|x|^2}{4}} - \int_{\Gamma \cap \{r \leq L\}} e^{-\frac{|x|^2}{4}} \\
\geq \int_{\{r\leq L\}} \int_0^{2\pi} e^{-\frac{r^2}{4}} \, \Big [ -C u^2 + \frac{1}{C} \, |\nabla^\Gamma u|^2 \Big ] \, d\theta \, dA,
\end{multline} 
where $C<\infty$ is a numerical constant. Hence, we can do the same computation as in the proof of Proposition 2.3 in \cite{BC} to obtain the desired result.
\end{proof}

Recall that $\tilde{M}_\tau$ is expressed as graph of a function $u(x,\tau)$ over $\Gamma\cap B_{2\rho(\tau)}(0)$ satisfying the estimate \eqref{small_graph}. Using that $\bar{M}_\tau=S(\tau)^{-1}\tilde{M}_\tau$ moves by rescaled mean curvature flow one obtains:

\begin{lemma}[evolution of graph function]\label{Error u-PDE}
The function $u(x,\tau)$ satisfies 
\begin{equation}
\partial_\tau u = \mathcal{L} u  + E+\langle A(\tau)x,\nu_\Gamma\rangle,
\end{equation}
where $A=S'S^{-1}$ and $\mathcal L$ is the linear operator on $\Gamma$ defined by
\begin{equation}\label{def_oper_ell}
\mathcal{L} f = \Delta f - \frac{1}{2} \, \langle x^{\text{\rm tan}},\nabla f \rangle + f,
\end{equation}
and where the error term can be estimated by
\begin{equation}\label{error-C0 norm}
|E| \leq C\rho(\tau)^{-1}( |u| + |\nabla u|+|A(\tau)|)
\end{equation}
for $\tau \ll 0$.
\end{lemma}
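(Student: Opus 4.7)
The plan is to carefully track how the time-dependent rotation $S(\tau)$ modifies the standard graph-evolution computation on the cylinder. To begin, I would exploit the fact that $\bar M_\tau^{X_0}$ evolves by rescaled mean curvature flow, so the normal velocity of its points equals $\tfrac12\langle x,\nu\rangle - H$. Writing $\tilde M_\tau = S(\tau)\bar M_\tau$ and differentiating this identity in $\tau$, the velocity of a point on $\tilde M_\tau$ decomposes as the rotated normal velocity of $\bar M_\tau$ plus the kinematic contribution $S'(\tau)S(\tau)^{-1}\tilde x = A(\tau)\tilde x$. Because $H$ and $\langle x,\nu\rangle$ are invariants of the rigid rotation $S(\tau)$, projecting onto the normal $\tilde\nu$ of $\tilde M_\tau$ will yield that the normal velocity of $\tilde M_\tau$ equals $\tfrac12\langle \tilde x,\tilde\nu\rangle - H + \langle A(\tau)\tilde x,\tilde\nu\rangle$.

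Next, I would convert this geometric evolution into a PDE for the graph function $u(\cdot,\tau)$ parametrizing $\tilde M_\tau = \{x + u(x,\tau)\nu_\Gamma(x)\}$ over $\Gamma\cap B_{2\rho(\tau)}(0)$. Differentiating the parametrization in $\tau$ and projecting onto $\nu_\Gamma$ gives that $\partial_\tau u\cdot \langle\tilde\nu,\nu_\Gamma\rangle$ equals the normal velocity of $\tilde M_\tau$, where $\langle\tilde\nu,\nu_\Gamma\rangle^{-1} = 1 + O(|\nabla u|^2)$. The standard shrinker-stability computation on $\Gamma = \mathbb{R}^2\times S^1(\sqrt 2)$ then supplies the linearization $\tfrac12\langle\tilde x,\tilde\nu\rangle - H(\tilde x) = \mathcal{L}u - Q$, with $\mathcal{L}$ as in \eqref{def_oper_ell} (the constant term being $|A_\Gamma|^2 + \tfrac12 = 1$, and $x^{\tan} = x_1e_1 + x_2e_2$ since the $S^1$-component of the position is purely radial), and with the quadratic remainder of the form $Q = P_1(u,\nabla u)\nabla^2 u + P_2(u,\nabla u)$ where $P_1, P_2$ are at least linear in $(u,\nabla u)$. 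Putting the pieces together will yield $\partial_\tau u = \mathcal{L}u + \langle A(\tau)x,\nu_\Gamma\rangle + E$, where the error $E$ collects $-Q$, the rotation correction $\langle A\tilde x,\tilde\nu\rangle - \langle Ax,\nu_\Gamma\rangle$, and the multiplicative $O(|\nabla u|^2)$ contribution from the reciprocal normal alignment.

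The final step is to bound $E$ using the smallness estimate $\|u\|_{C^4}\leq \rho(\tau)^{-2}$ from \eqref{small_graph} together with the graphical radius constraint $|x|\leq 2\rho(\tau)$. For the quadratic term, extracting one factor in $Q$ of size $\rho^{-2}$ gives $|Q|\leq C\rho^{-2}(|u|+|\nabla u|)\leq C\rho^{-1}(|u|+|\nabla u|)$. For the rotation correction, I would decompose
\begin{equation*}
\langle A\tilde x,\tilde\nu\rangle - \langle Ax,\nu_\Gamma\rangle = \langle A(u\nu_\Gamma),\tilde\nu\rangle + \langle Ax,\tilde\nu - \nu_\Gamma\rangle,
\end{equation*}
estimating the first piece by $|A|\,|u|\leq |A|\rho^{-2}$ and the second by $C|A|\,|x|\,|\nabla u|\leq 2C|A|\rho(\tau)\cdot \rho(\tau)^{-2} = 2C|A|\rho(\tau)^{-1}$; the crucial input is the bound $|x|\leq 2\rho(\tau)$ on the graphical region, without which the cross term would not pick up any $\rho^{-1}$ factor. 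The residual $O(|\nabla u|^2)$ multiplicative term produces only still smaller errors, and assembling everything gives $|E|\leq C\rho^{-1}(|u|+|\nabla u|+|A|)$. The main delicate point in the argument will be the careful splitting of the rotation term to isolate the clean model contribution $\langle A(\tau)x, \nu_\Gamma\rangle$, since the remaining quadratic-error bookkeeping is routine for graph-over-cylinder mean curvature flow.
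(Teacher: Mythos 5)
Your proof is correct and follows exactly the route the paper takes implicitly by citing \cite[Lemma 2.4]{BC}: differentiate the rotated parametrization $\tilde M_\tau = S(\tau)\bar M_\tau$ to pick up the kinematic term $\langle A(\tau)\tilde x,\tilde\nu\rangle$, run the standard shrinker-graph linearization over $\Gamma$ to obtain $\mathcal{L}$ with constant $|A_\Gamma|^2+\tfrac12=1$, and absorb the quadratic remainder and the rotation-correction $\langle A\tilde x,\tilde\nu\rangle-\langle Ax,\nu_\Gamma\rangle$ into $E$ using $\|u\|_{C^4}\leq\rho^{-2}$ and $|x|\leq 2\rho$. Your splitting of the rotation correction and the observation that the cross term $|Ax|\,|\tilde\nu-\nu_\Gamma|$ only gains a factor of $\rho^{-1}$ (rather than $\rho^{-2}$) because $|x|$ is as large as $2\rho$ is precisely the point that makes the error estimate come out as stated.
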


\begin{proof}
The proof is identical to the proof of \cite[Lemma 2.4]{BC}.
\end{proof}

Denote by $\mathcal{H}$ the Hilbert space of all functions $f$ on $\Gamma$ such that 
\begin{equation}\label{def_norm}
\|f\|_{\mathcal{H}}^2 = \int_\Gamma \frac{1}{(4\pi)^{3/2}} e^{-\frac{|x|^2}{4}} \, f^2 < \infty.
\end{equation}
 
\begin{proposition}[evolution of truncated graph function]
\label{Error hat.u-PDE 1}
The truncated function $\hat{u}(x,\tau) = u(x,\tau) \, \chi \big ( \frac{r}{\rho(\tau)} \big )$ satisfies 
\begin{equation}
\partial_\tau \hat{u} = \mathcal{L} \hat{u}  + \hat{E}+\langle A(\tau)x,\nu_\Gamma\rangle \chi\!\left(\frac{r}{\rho(\tau)}\right),
\end{equation}
where
\begin{equation}
\|\hat{E}\|_{\mathcal{H}} \leq C\rho^{-1} \big( \|\hat{u}\|_{\mathcal{H}}+|A(\tau)|\big)
\end{equation}
for $\tau \ll 0$.
\end{proposition}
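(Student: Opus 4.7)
The plan is to derive the evolution of $\hat u = \chi(r/\rho(\tau))\,u$ directly from the unrestricted equation of Lemma~\ref{Error u-PDE}, collect all error terms, and estimate them in $\mathcal H$ using the inverse Poincaré inequality of Proposition~\ref{Gaussian density analysis}. Denote $\chi_\tau(x):=\chi(r(x)/\rho(\tau))$ for brevity.

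First, I would compute the commutator $[\chi_\tau,\mathcal L]$. Since $\chi_\tau$ depends only on $r$, the product rule yields
\begin{equation}
\mathcal L(\chi_\tau u)=\chi_\tau\,\mathcal L u + u\,\Delta\chi_\tau + 2\nabla\chi_\tau\cdot\nabla u - \tfrac12\, u\,\langle x^{\mathrm{tan}},\nabla\chi_\tau\rangle.
\end{equation}
Using $\partial_\tau(\chi_\tau u) = \chi_\tau\partial_\tau u + u\partial_\tau\chi_\tau$ together with Lemma~\ref{Error u-PDE}, I obtain
\begin{equation}
\partial_\tau\hat u=\mathcal L\hat u + \hat E + \langle A(\tau)x,\nu_\Gamma\rangle\,\chi_\tau,
\end{equation}
with
\begin{equation}
\hat E = \chi_\tau E + u\,\partial_\tau\chi_\tau - u\,\Delta\chi_\tau - 2\nabla\chi_\tau\cdot\nabla u + \tfrac12\, u\,\langle x^{\mathrm{tan}},\nabla\chi_\tau\rangle.
\end{equation}
So the proposition reduces to showing $\|\hat E\|_{\mathcal H}\leq C\rho^{-1}(\|\hat u\|_{\mathcal H}+|A(\tau)|)$.

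For the first term, the bound \eqref{error-C0 norm} gives $|\chi_\tau E|\leq C\rho^{-1}\chi_\tau(|u|+|\nabla u|+|A(\tau)|)$. The $|A(\tau)|$ piece contributes a constant times $\|\chi_\tau\|_{\mathcal H}\leq C$ (the Gaussian mass of $\Gamma$ is finite). For the $u$ and $\nabla u$ pieces, I would apply Proposition~\ref{Gaussian density analysis} with $L=\rho(\tau)$: this controls $\int_{\Gamma\cap\{r\leq\rho\}}e^{-|x|^2/4}(u^2+|\nabla u|^2)$ by $C\int_{\Gamma\cap\{r\leq \rho/2\}}e^{-|x|^2/4}u^2\leq C\|\hat u\|_{\mathcal H}^2$, since $\chi\equiv 1$ on $\{r\leq\rho/2\}$. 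Hence $\|\chi_\tau E\|_{\mathcal H}\leq C\rho^{-1}(\|\hat u\|_{\mathcal H}+|A(\tau)|)$.

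The remaining four terms are all supported on the annulus $\mathcal A_\tau:=\{\rho(\tau)/2\leq r\leq\rho(\tau)\}$, where $\nabla\chi_\tau$ is pointwise bounded by $C/\rho$, $\Delta\chi_\tau$ by $C/\rho^2$, and $\langle x^{\mathrm{tan}},\nabla\chi_\tau\rangle=(r/\rho)\chi'(r/\rho)$ is bounded by an absolute constant. Using $|\rho'|\leq\rho$ from \eqref{univ_fns}, the same is true of $\partial_\tau\chi_\tau=-(r\rho'/\rho^2)\chi'(r/\rho)$. Therefore all four terms are pointwise dominated by $C(|u|+|\nabla u|)\mathbf 1_{\mathcal A_\tau}$. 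The key observation is that on the annulus $\mathcal A_\tau$, Proposition~\ref{Gaussian density analysis} (second inequality, applied with $L=\rho$) gives
\begin{equation}
\int_{\mathcal A_\tau}e^{-|x|^2/4}\,u^2\leq \frac{C}{\rho^2}\int_{\Gamma\cap\{r\leq\rho/2\}}e^{-|x|^2/4}\,u^2\leq \frac{C}{\rho^2}\,\|\hat u\|_{\mathcal H}^2,
\end{equation}
and the first inequality applied in tandem yields the analogous estimate for $|\nabla u|^2$. Combining these bounds with the previous paragraph gives $\|\hat E\|_{\mathcal H}\leq C\rho^{-1}(\|\hat u\|_{\mathcal H}+|A(\tau)|)$ and completes the proof.

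The main technical ingredient is the annular decay encoded in Proposition~\ref{Gaussian density analysis}; everything else is bookkeeping for the commutator. Since the arguments for $u$ and $\nabla u$ are identical in spirit to those in \cite[Lemma~2.4]{BC} but one dimension higher on the cylindrical factor $\mathbb R^2\times S^1$, no genuinely new estimate is required beyond the inverse Poincaré inequality already established.
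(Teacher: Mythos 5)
Your overall strategy is the same as the paper's: compute the commutator of $\chi_\tau$ with $\mathcal L$ explicitly, record the resulting pointwise form of $\hat E$, and then pass to the $\mathcal H$-norm using the inverse Poincar\'e inequality (Proposition~\ref{Gaussian density analysis}). The commutator computation is correct and matches the paper (the paper's displayed formula for $\hat E$ is the same up to the harmless $\frac{1}{r\rho}u\chi'$ piece coming from $\Delta\chi$), and your treatment of $\chi_\tau E$ and of the $|A(\tau)|$ contribution is fine.

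There is, however, a gap in the last step. When you lump the four annulus terms into the pointwise bound $C(|u|+|\nabla u|)\mathbf 1_{\mathcal A_\tau}$, you discard the crucial factor $\rho^{-1}$ that sits in front of $|\nabla u|$: the only gradient term is $2\nabla\chi_\tau\cdot\nabla u$, and since $|\nabla\chi_\tau|\leq C/\rho$ this term is $O(\rho^{-1}|\nabla u|)$, \emph{not} $O(|\nabla u|)$. Having dropped that factor, you then need to regain it by asserting that ``the first inequality applied in tandem yields the analogous estimate for $|\nabla u|^2$'', i.e.\ $\int_{\mathcal A_\tau}e^{-|x|^2/4}|\nabla u|^2\leq C\rho^{-2}\|\hat u\|_{\mathcal H}^2$. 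That annular decay for the gradient is \emph{not} part of Proposition~\ref{Gaussian density analysis}: the first inequality only controls $\int_{\{r\leq L\}}|\nabla u|^2$ by $\int_{\{r\leq L/2\}}u^2$ (with no extra $L^{-2}$ gain), and the second inequality gives the $L^{-2}$ annular gain for $u^2$ only, not for $|\nabla u|^2$. There is no obvious way to obtain the claimed gradient annular decay from the two stated inequalities.

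The fix is simply to keep the sharp pointwise bound, exactly as the paper does: on $\{\rho/2\leq r\leq\rho\}$ one has $|\hat E|\leq C|u|+C\rho^{-1}(|\nabla u|+|A(\tau)|)$. Then $\|u\,\mathbf 1_{\mathcal A_\tau}\|_{\mathcal H}\leq C\rho^{-1}\|\hat u\|_{\mathcal H}$ from the second inequality, while $\rho^{-1}\|\nabla u\,\mathbf 1_{\mathcal A_\tau}\|_{\mathcal H}\leq C\rho^{-1}\|\hat u\|_{\mathcal H}$ requires only the (unimproved) first inequality, and everything closes. Once this factor is restored your argument is essentially identical to the paper's.
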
 

\begin{proof} 
We compute 
\begin{equation}
\partial_\tau \hat{u} = \mathcal{L} \hat{u} + \hat{E} + \langle A(\tau)x,\nu_\Gamma \rangle \, \chi \Big ( \frac{r}{\rho(\tau)} \Big ),
\end{equation}
where 
\begin{multline} 
\hat{E} = E \, \chi\! \Big ( \frac{r}{\rho(\tau)} \Big ) - \frac{2}{\rho(\tau)} \, \frac{\partial u}{\partial r} \, \chi' \Big ( \frac{r}{\rho(\tau)} \Big ) - \frac{1}{\rho(\tau)^2} \, u \, \chi'' \Big ( \frac{r}{\rho(\tau)} \Big ) \\ 
+ \frac{r}{2\rho(\tau)} \, u \, \chi' \Big ( \frac{r}{\rho(\tau)} \Big ) - \frac{r \rho'(\tau)}{\rho(\tau)^2} \, u \, \chi' \Big ( \frac{r}{\rho(\tau)} \Big ). 
\end{multline} 
Using Lemma \ref{Error u-PDE} (evolution of graph function), we deduce that 
\[|\hat{E}| \leq C\rho(\tau)^{-1}( |u| + |\nabla^\Gamma u| +  |A(\tau)|)\] 
for $r \leq \frac{\rho(\tau)}{2}$. Moreover, since $|\rho'(\tau)| \leq \rho(\tau)$ and $\rho(\tau) \to \infty$, we obtain 
\[|\hat{E}| \leq C |u| + C \rho^{-1}(|\nabla^\Gamma u| +  |A(\tau)|)\] 
for $\frac{\rho(\tau)}{2} \leq r \leq \rho(\tau)$. Thus, we can obtain the desired result as in the proof of \cite[Lemma 2.5]{BC}.
\end{proof}

\begin{lemma}[estimate for rotation function]\label{lemma_rotation_est}
The rotation can be estimated by
\begin{equation}
|A(\tau)|\leq C\rho^{-1}\|u\|_{\mathcal{H}}.
\end{equation}
In particular, we have
\begin{equation}\label{error_no_A}
\|\hat u_\tau -\mathcal{L} \hat u\|_{\mathcal{H}} \leq C\rho^{-1}\|\hat u\|_{\mathcal{H}}.
\end{equation}
\end{lemma}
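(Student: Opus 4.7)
The plan is to differentiate the orthogonality relation from Proposition \ref{orthogonality} in $\tau$, substitute the evolution equation for $\hat{u}$ from Proposition \ref{Error hat.u-PDE 1}, and invert a positive-definite bilinear form to extract $A(\tau)$. The key structural observation is that for every $A\in \mathrm{o}(4)$ the rotation Jacobi field $\phi_A(x):=\langle Ax,\nu_\Gamma\rangle$ on $\Gamma$ satisfies $\mathcal{L}\phi_A=0$. Indeed, for $A\in\mathcal{A}$ the functions $\phi_A$ are precisely linear combinations of the neutral eigenfunctions $x_1\cos\theta,\,x_1\sin\theta,\,x_2\cos\theta,\,x_2\sin\theta$ from \eqref{expansion_neutral}, while for the $A_{12}$ and $A_{34}$ rotations one has $\phi_A \equiv 0$ on $\Gamma$. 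Since $\hat{u}(\cdot,\tau)$ has compact support, self-adjointness of $\mathcal{L}$ with respect to the Gaussian weight gives $\int_\Gamma \phi_A\,(\mathcal{L}\hat{u})\,e^{-|x|^2/4} = 0$.

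Next, I differentiate the orthogonality identity $\int_\Gamma \phi_A\,\hat{u}(\cdot,\tau)\,e^{-|x|^2/4}=0$ in $\tau$ (legitimate since $\phi_A$ and the Gaussian weight are $\tau$-independent) and substitute the evolution equation $\partial_\tau\hat{u}=\mathcal{L}\hat{u}+\hat{E}+\langle A(\tau)x,\nu_\Gamma\rangle\,\chi(r/\rho)$. The $\mathcal{L}\hat{u}$ contribution drops out, leaving
\begin{equation*}
\int_\Gamma \phi_A\,\langle A(\tau)x,\nu_\Gamma\rangle\,\chi(r/\rho)\,e^{-|x|^2/4}=-\int_\Gamma \phi_A\,\hat{E}\,e^{-|x|^2/4}
\end{equation*}
for every $A\in\mathcal{A}$. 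The bilinear form $B_\tau(A,A'):=\int_\Gamma \phi_A\,\phi_{A'}\,\chi(r/\rho(\tau))\,e^{-|x|^2/4}$ on the four-dimensional space $\mathcal{A}$ is uniformly positive definite for $\tau\ll 0$; this is exactly the positive-definiteness already invoked in the proof of Proposition \ref{orthogonality}, cf.\ \eqref{der_est}. Testing the identity with $A=A(\tau)$ and applying Cauchy--Schwarz together with the pointwise bound $|\phi_{A(\tau)}|\leq |A(\tau)|\,|x|$ yields $c\,|A(\tau)|^2 \leq C\,|A(\tau)|\,\|\hat{E}\|_{\mathcal{H}}$, hence $|A(\tau)|\leq C\,\|\hat{E}\|_{\mathcal{H}}$.

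To close the loop I insert the estimate $\|\hat{E}\|_{\mathcal{H}}\leq C\rho^{-1}(\|\hat{u}\|_{\mathcal{H}}+|A(\tau)|)$ from Proposition \ref{Error hat.u-PDE 1}; since $C\rho^{-1}\to 0$ as $\tau\to-\infty$, the small $|A(\tau)|$ term on the right can be absorbed into the left, giving $|A(\tau)|\leq C\rho^{-1}\|\hat{u}\|_{\mathcal{H}}\leq C\rho^{-1}\|u\|_{\mathcal{H}}$. The second assertion \eqref{error_no_A} then follows immediately from $\hat{u}_\tau-\mathcal{L}\hat{u}=\hat{E}+\langle A(\tau)x,\nu_\Gamma\rangle\,\chi(r/\rho)$: the first summand is controlled by Proposition \ref{Error hat.u-PDE 1} after inserting the just-established bound on $|A(\tau)|$, and the second satisfies $\|\langle A(\tau)x,\nu_\Gamma\rangle\,\chi\|_{\mathcal{H}}\leq C|A(\tau)|$. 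I do not expect any serious technical obstacle; the main conceptual point, which does all the real work, is recognizing that the rotation fields $\phi_A$ are precisely neutral eigenfunctions of $\mathcal{L}$, so that testing the evolution equation against them kills the $\mathcal{L}\hat{u}$ term for free and isolates $A(\tau)$ modulo the tame error $\hat{E}$.
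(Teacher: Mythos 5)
Your proof is correct and essentially identical to the paper's: both test the evolution equation against the rotation Jacobi fields $\phi_A=\langle Ax,\nu_\Gamma\rangle$, use that these lie in $\ker\mathcal{L}$ together with self-adjointness and the orthogonality from Proposition~\ref{orthogonality} to kill $\partial_\tau\hat u-\mathcal{L}\hat u$ modulo $\hat E$, then invoke the uniform positive-definiteness of the bilinear form $(A,B)\mapsto\int\phi_A\phi_B\,\chi\,e^{-|x|^2/4}$ on $\mathcal{A}$, Cauchy--Schwarz, and absorption of the $C\rho^{-1}|A(\tau)|$ term. The only cosmetic difference is that you spell out explicitly that $\phi_A$ for $A\in\mathcal{A}$ spans exactly the $\theta$-dependent neutral eigenfunctions (while $\phi_A\equiv 0$ for the $A_{12},A_{34}$ rotations), whereas the paper leaves this implicit.
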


\begin{proof}The proof is similar to the one of \cite[Lemma 2.6]{BC}. The conditions
$A_{12}(\tau)=A_{34}(\tau)=0$ imply
\begin{equation}\label{pos_def_form}
|A(\tau)|^2  \leq C \int_\Gamma e^{-\frac{|x|^2}{4}} \, \langle A(\tau)x,\nu_\Gamma \rangle^2 \, \chi\Big ( \frac{r}{\rho(\tau)} \Big ).
\end{equation}
Indeed, since non zero anti-symmetric matrices $A$ with $A_{12}=A_{34}=0$ are the velocity fields of rotations which change the axis of the cylinder, the bilinear-form
\begin{equation}
(A,B):=\int_\Gamma e^{-\frac{|x|^2}{4}} \, \langle Ax,\nu_\Gamma \rangle \langle Bx,\nu_\Gamma \rangle \, \chi\Big ( \frac{r}{\rho(\tau)} \Big ),
\end{equation}
is uniformly positive definite on such matrices. \\
Proposition \ref{orthogonality} (orthogonality) implies that $\hat{u}$, hence also $\partial_{\tau} \hat{u}$, is orthogonal to all $\langle Ax,\nu \rangle$ for each anti-symmetric $A$. Also $\mathcal{L}\hat{u}$ is orthogonal to $\langle Ax,\nu \rangle$, as the latter is in the kernel of $\mathcal{L}$. Thus, $\langle Ax,\nu \rangle$ is orthogonal to $\partial_{\tau}\hat{u}-\mathcal{L}\hat{u}=\hat{E} + \langle A(\tau)x,\nu_\Gamma \rangle \, \chi \big ( \frac{r}{\rho(\tau)} \big )$. From this and \eqref{pos_def_form}, we get

\begin{align*} 
|A(\tau)|^2 
&\leq C \int_\Gamma e^{-\frac{|x|^2}{4}} \, |\hat{E}| \, |\langle A(\tau)x,\nu_\Gamma \rangle|  \\ 
&\leq C \, \|\hat{E}\|_{\mathcal{H}} \, |A(\tau)| \\ 
&\leq C\rho^{-1} \, \|\hat{u}\|_{\mathcal{H}} \, |A(\tau)| + C\rho^{-1} \, |A(\tau)|^2, 
\end{align*} 
where in the last step we have used Proposition \ref{Error hat.u-PDE 1}. Consequently,
\begin{equation}
|A(\tau)| \leq C\rho^{-1} \, \|\hat{u}\|_{\mathcal{H}}.
\end{equation}
Using Proposition \ref{Error hat.u-PDE 1} once more, we get \eqref{error_no_A} as well. 
\end{proof}

The operator $\mathcal L$ is explicitly given by
\begin{equation}
\mathcal L=\frac{\partial^2}{\partial x_1^2} f+\frac{\partial^2}{\partial x_2^2} f + \frac{1}{2} \, \frac{\partial^2}{\partial \theta^2} f - \frac{1}{2} \, x_1 \, \frac{\partial}{\partial x_1} f- \frac{1}{2} \, x_2 \, \frac{\partial}{\partial x_2} f + f.\end{equation}
Analysing the spectrum of $\mathcal L$, the Hilbert space $\mathcal H$ from \eqref{def_norm} can be decomposed as
\begin{equation}
\mathcal H = \mathcal{H}_+\oplus \mathcal{H}_0\oplus \mathcal{H}_-,
\end{equation}
where 
\begin{align}
&\mathcal{H}_+ =\text{span}\{1,\cos\theta,\sin\theta,x_1,x_2\},\label{basis_hplus}\\
&\mathcal{H}_0 =\text{span}\{x_1\cos\theta,x_1\sin\theta,x_2\cos\theta,x_2\sin\theta,x_1^2-2,x_2^2-2,x_1x_2\}.
\end{align}
We have
\begin{align} 
&\langle \mathcal{L} f,f \rangle_{\mathcal{H}} \geq \frac{1}{2} \, \|f\|_{\mathcal{H}}^2 & \text{\rm for $f \in \mathcal{H}_+$,} \nonumber\\ 
&\langle \mathcal{L} f,f \rangle_{\mathcal{H}} = 0 & \text{\rm for $f \in \mathcal{H}_0$,} \\ 
&\langle \mathcal{L} f,f \rangle_{\mathcal{H}} \leq -\frac{1}{2} \, \|f\|_{\mathcal{H}}^2 & \text{\rm for $f \in \mathcal{H}_-$.} \nonumber
\end{align}
Consider the functions 
\begin{align}
&U_+(\tau) := \|P_+ \hat{u}(\cdot,\tau)\|_{\mathcal{H}}^2, \nonumber\\ 
&U_0(\tau) := \|P_0 \hat{u}(\cdot,\tau)\|_{\mathcal{H}}^2,\label{def_U_PNM} \\ 
&U_-(\tau) := \|P_- \hat{u}(\cdot,\tau)\|_{\mathcal{H}}^2, \nonumber
\end{align}
where $P_+, P_0, P_-$ denote the orthogonal projections to $\mathcal{H}_+,\mathcal{H}_0,\mathcal{H}_-$, respectively.

\begin{theorem}[Merle-Zaag alternative]\label{mz.ode.fine.neck}
For $\tau\to -\infty$ either the neutral mode is dominant, i.e.
\begin{equation}
U_-+U_+=o(U_0),
\end{equation}
or the unstable mode is dominant, i.e.
\begin{equation}
U_-+U_0\leq C\rho^{-1}U_+.
\end{equation}
\end{theorem}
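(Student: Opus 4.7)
The plan is to reduce to the standard Merle--Zaag ODE lemma \cite{MZ} by deriving differential inequalities for the three mode energies $U_{\pm}, U_0$ from the evolution equation of $\hat u$. The key inputs are already in place: by Proposition \ref{Error hat.u-PDE 1} together with Lemma \ref{lemma_rotation_est} we have
\begin{equation}
\partial_\tau \hat u = \mathcal{L}\hat u + F(\cdot,\tau), \qquad \|F(\cdot,\tau)\|_{\mathcal H} \leq C\rho(\tau)^{-1}\|\hat u(\cdot,\tau)\|_{\mathcal H},
\end{equation}
where we absorb both $\hat E$ and the rotation term $\langle A(\tau)x,\nu_\Gamma\rangle\chi(r/\rho(\tau))$ into $F$.

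First I would check that each of the subspaces $\mathcal{H}_+, \mathcal{H}_0, \mathcal{H}_-$ is $\mathcal{L}$-invariant (they are spanned by eigenfunctions of $\mathcal{L}$), so the orthogonal projections $P_+, P_0, P_-$ commute with $\mathcal{L}$. Differentiating $U_\pm, U_0$ in $\tau$ then yields, for each $\star\in\{+,0,-\}$,
\begin{equation}
\tfrac{d}{d\tau} U_\star = 2\langle P_\star \hat u,\mathcal{L} P_\star \hat u\rangle_{\mathcal H} + 2\langle P_\star \hat u, P_\star F\rangle_{\mathcal H}.
\end{equation}
The spectral inequalities listed before \eqref{def_U_PNM} control the first term on each line, while the second term is bounded by Cauchy--Schwarz using $\|F\|_{\mathcal{H}}\leq C\rho^{-1}\|\hat u\|_{\mathcal H}$ and $\|\hat u\|_{\mathcal H}^2 = U_+ + U_0 + U_-$. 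Arithmetic-geometric means then give the system
\begin{align}
\tfrac{d}{d\tau} U_+ &\geq U_+ - C\rho^{-1}(U_+ + U_0 + U_-),\\
\bigl|\tfrac{d}{d\tau} U_0\bigr| &\leq C\rho^{-1}(U_+ + U_0 + U_-),\\
\tfrac{d}{d\tau} U_- &\leq -U_- + C\rho^{-1}(U_+ + U_0 + U_-),
\end{align}
valid for all $\tau\ll 0$.

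Since $\rho(\tau)\to\infty$ by \eqref{univ_fns}, the coefficient $C\rho(\tau)^{-1}$ can be made arbitrarily small by going far enough into the past. This is exactly the hypothesis of the Merle--Zaag ODE lemma (applied on the interval $(-\infty,\tau_0]$ with shrinking error), whose conclusion is the stated dichotomy: either $U_+ + U_- = o(U_0)$ as $\tau\to-\infty$, or $U_0 + U_- \leq C\rho(\tau)^{-1} U_+$ for $\tau\ll 0$.

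The only non-routine point is really the invariance of the three spectral subspaces under $\mathcal{L}$ together with the observation that all error contributions, including the rotation-induced term, have been absorbed into an $O(\rho^{-1}\|\hat u\|_{\mathcal{H}})$ forcing; given the uniform bound in Lemma \ref{lemma_rotation_est}, this is immediate. The rest is a verbatim application of the Merle--Zaag trichotomy, in which the stable mode always gets dominated and one of the remaining two takes over.
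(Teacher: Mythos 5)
Your proposal is correct and takes essentially the same route as the paper: derive the Merle--Zaag differential inequality system for $U_+,U_0,U_-$ from Proposition \ref{Error hat.u-PDE 1} and Lemma \ref{lemma_rotation_est}, then invoke the Merle--Zaag ODE lemma (Lemma \ref{mz.ode}). The paper simply states the inequalities without spelling out the projection/Cauchy--Schwarz bookkeeping that you make explicit.
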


\begin{proof}
Using Proposition \ref{Error hat.u-PDE 1} (evolution of truncated graph function) and Lemma \ref{lemma_rotation_est} (estimate for rotation function) we obtain
\begin{align} 
&\frac{d}{d\tau} U_+(\tau) \geq U_+(\tau) - C\rho^{-1} \, (U_+(\tau) + U_0(\tau) + U_-(\tau)), \nonumber\\ 
&\Big | \frac{d}{d\tau} U_0(\tau) \Big | \leq C\rho^{-1} \, (U_+(\tau) + U_0(\tau) + U_-(\tau)), \label{U_PNM_system}\\ 
&\frac{d}{d\tau} U_-(\tau) \leq -U_-(\tau) + C\rho^{-1} \, (U_+(\tau) + U_0(\tau) + U_-(\tau)). \nonumber
\end{align}
Hence, the Merle-Zaag ODE lemma (Lemma \ref{mz.ode}) implies the assertion.
\end{proof}

\bigskip

\section{Bubble-sheet analysis in the neutral mode}\label{sec_neutral_mode}

In this section, we prove Theorem \ref{main_thm} in the case where the neutral mode is dominant. Namely, throughout this section we consider a noncompact ancient noncollapsed flow in $\mathbb{R}^4$ whose truncated graph function $\hat u(\cdot,\tau)$ satisfies
\begin{equation}\label{neutral_dom}
U_-+U_+=o(U_0).
\end{equation}

The following lemma gives a rough  bound, showing that for $\tau\to -\infty$ the function $U_0$  decays slower than any exponential.

\begin{lemma}[rough decay estimate]\label{neutral mode lower bound}
For any $\delta>0$ we have
\begin{equation}
U_0(\tau) \geq e^{\delta\tau}
\end{equation}
for sufficiently large $-\tau$.
\end{lemma}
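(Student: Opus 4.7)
The plan is to argue by contradiction: suppose there exist $\delta>0$ and a sequence $\tau_k\to -\infty$ with $U_0(\tau_k)<e^{\delta\tau_k}$, and deduce that $\mathcal M$ must be, up to rigid motion, a shrinking bubble-sheet cylinder. This contradicts the standing assumption in Section~\ref{Sec_set_up} that $\mathcal M$ is not self-similarly shrinking.

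The first step uses only the ODE system \eqref{U_PNM_system} from Theorem~\ref{mz.ode.fine.neck}. By the neutral-mode dominance $U_++U_-=o(U_0)$, the middle inequality of \eqref{U_PNM_system} simplifies, for $\tau\ll 0$ with $U_0>0$, to
\[
\bigl|\tfrac{d}{d\tau}\log U_0\bigr|\leq 2C\rho(\tau)^{-1}.
\]
Given any $\eta>0$, since $\rho(\tau)\to\infty$ we can choose $\tau_0\ll 0$ so that $\rho^{-1}(s)\leq \eta$ for all $s\leq \tau_0$. Integrating the displayed inequality from $\tau_k<\tau_0$ up to $\tau_0$ yields
\[
\log U_0(\tau_0)\leq \log U_0(\tau_k)+2C\eta(\tau_0-\tau_k)\leq (\delta-2C\eta)\tau_k+2C\eta\tau_0.
\]
Choosing $\eta<\delta/(2C)$ and sending $\tau_k\to -\infty$ forces $U_0(\tau_0)=0$. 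Since the argument applies to any sufficiently negative $\tau_0$, one obtains $U_0\equiv 0$ on an interval $(-\infty,\tau_\ast]$; the dominance hypothesis then forces $U_+\equiv U_-\equiv 0$ there as well, so $\hat u\equiv 0$ for $\tau\leq\tau_\ast$.

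Unpacking the definition of $\hat u$, this means $u(\cdot,\tau)\equiv 0$ on $\Gamma\cap B_{\rho(\tau)/2}(0)$ for every $\tau\leq \tau_\ast$, so the rotated renormalized hypersurfaces $\tilde M_\tau$ coincide with $\Gamma$ on balls whose radii $\rho(\tau)/2$ diverge to $\infty$. Moreover, Lemma~\ref{lemma_rotation_est} gives $|A(\tau)|\leq C\rho^{-1}\|\hat u\|_{\mathcal H}=0$ throughout this interval, so the fine-tuning rotation is a constant $S_\infty\in\mathrm{SO}(4)$ there, and $\bar M_\tau=S_\infty^{-1}\Gamma$ holds on an exhausting sequence of balls.

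The main obstacle is the final step: upgrading this ``exact coincidence on an exhausting family of bounded regions'' to a genuine global identification $\bar M_\tau\equiv S_\infty^{-1}\Gamma$ for $\tau\leq\tau_\ast$. I would handle this by invoking backward uniqueness for mean curvature flow (in the style of Kotschwar), applied to $\bar M_\tau$ and the rescaled shrinking cylinder $S_\infty^{-1}\Gamma$: both are smooth ancient rescaled-MCF solutions that agree on a space-time open set whose spatial slices exhaust $\mathbb R^4$, and hence they must coincide wherever both are defined. This forces $\mathcal M$ itself to be a self-similarly shrinking bubble-sheet, contradicting our standing assumption and completing the proof.
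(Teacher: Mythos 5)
Your first two paragraphs reproduce the paper's argument (phrased as a proof by contradiction rather than a direct integration): the key step in both is passing from the Merle--Zaag system \eqref{U_PNM_system} together with $U_-+U_+=o(U_0)$ to a bound $\bigl|\tfrac{d}{d\tau}\log U_0\bigr|\leq\eta$ for $\tau\ll 0$, and then integrating. The paper simply fixes a base time $\tau_0$ and integrates forward to get $\log U_0(\tau)\geq \log U_0(\tau_0) -\tfrac{1}{2}\delta(\tau_0-\tau)$, hence $U_0(\tau)\geq e^{\delta\tau}$ once $-\tau$ is large; you run the same integration from a hypothetical bad time $\tau_k$ and send $\tau_k\to-\infty$. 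Either way the computation is the same.

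Everything from ``\,forces $U_0(\tau_0)=0$\,'' onward is superfluous and is where you drift away from what the hypothesis actually gives you. Neutral-mode dominance $U_-+U_+=o(U_0)$ already forces $U_0(\tau)>0$ for all $\tau$ sufficiently negative: the $o(\cdot)$ statement is vacuous otherwise, and the Merle--Zaag lemma you are invoking (Lemma~\ref{mz.ode}) explicitly assumes $U_0+U_++U_->0$, which together with $U_-+U_+=o(U_0)$ rules out $U_0(\tau_0)=0$. So the moment your displayed chain of inequalities gives $\log U_0(\tau_0)\to-\infty$ for a fixed $\tau_0$ you already have a contradiction, and the remaining two paragraphs — deducing $\hat u\equiv 0$, passing to $\tilde M_\tau=\Gamma$ on exhausting balls, then invoking backward uniqueness — are unnecessary machinery. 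They also would not be airtight as written even if needed: for each fixed $\tau$ the agreement between $\tilde M_\tau$ and $\Gamma$ holds only on a bounded region $\{r<\rho(\tau)/2\}$, not on a full spatial slice, so Kotschwar-type backward uniqueness does not apply directly and one would instead need a spatial unique continuation statement for the rescaled flow. Better to cut the argument off after the second paragraph and simply note that $U_0(\tau_0)>0$ is guaranteed by the standing hypotheses, yielding the contradiction immediately; or, more cleanly, argue directly as the paper does, avoiding the contradiction framing altogether.
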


\begin{proof}
Given any $\delta>0$, the inequality \eqref{U_PNM_system} together with the assumption $U_-+U_+=o(U_0)$ implies that
\begin{equation}
\Big | \frac{d}{d\tau} U_0(\tau) \Big |\leq \tfrac{1}{2}\delta U_0
\end{equation}
for sufficiently large $-\tau$. Rewriting this as
\begin{equation}
\Big | \frac{d}{d\tau} \log U_0(\tau) \Big |\leq \tfrac{1}{2}\delta,
\end{equation}
integration gives $\log U_0(\tau) \geq -C+\frac{1}{2}\delta \tau$. This yields the desired result.
\end{proof}

\begin{proposition}\label{L^2 convergence lemma}
Every sequence $\{\tau_i\}$ converging to $-\infty$ has a subsequence $\{\tau_{i_m}\}$ such that 
\begin{equation}
\lim_{\tau_{i_m} \to -\infty}\frac{\hat u(\cdot,\tau_{i_m})}{\|\hat u(\cdot,\tau_{i_m})\|_{\mathcal{H}}}= q_{11}(x_1^2-2)+ q_{22}(x_2^2-2)+2q_{12}x_1x_2
\end{equation}
in $\mathcal{H}$-norm, where $\{q_{ij}\}$ is a nontrivial semi-negative definite $2\times2$-matrix.
\end{proposition}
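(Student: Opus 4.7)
My plan is to combine finite-dimensional compactness in the neutral subspace $\mathcal{H}_0$ with Brunn's concavity principle applied to the convex bodies $\bar{K}_\tau$. Set $w_i := \hat{u}(\cdot, \tau_i)/\|\hat{u}(\cdot, \tau_i)\|_{\mathcal{H}}$. The neutral-mode assumption \eqref{neutral_dom} gives $\|P_{\pm} w_i\|_{\mathcal{H}} \to 0$ and $\|P_0 w_i\|_{\mathcal{H}} \to 1$. Since $\mathcal{H}_0$ is seven-dimensional, there is a subsequence $\{\tau_{i_m}\}$ along which $P_0 w_{i_m} \to v$ for some $v \in \mathcal{H}_0$ with $\|v\|_{\mathcal{H}} = 1$, and together with $\|P_\pm w_{i_m}\|_{\mathcal{H}} \to 0$ this yields $w_{i_m} \to v$ in $\mathcal{H}$. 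A direct computation of $\langle Ax, \nu_\Gamma\rangle$ for the six generators of $\mathrm{o}(4)$ shows that the four rotations mixing an $x_i$-axis ($i \in \{1,2\}$) with an $x_j$-axis ($j \in \{3,4\}$) produce, up to sign, the four $\theta$-dependent neutral eigenfunctions $x_1\cos\theta, x_1\sin\theta, x_2\cos\theta, x_2\sin\theta$, while the generators $A_{12}$ and $A_{34}$ yield the zero function on $\Gamma$. Dividing the identity in Proposition \ref{orthogonality} by $\|\hat{u}(\cdot, \tau_{i_m})\|_{\mathcal{H}}$ and passing to the limit $m \to \infty$ (the truncation and cutoff are harmless thanks to $\rho(\tau) \to \infty$ and the rapid Gaussian tail decay) forces $v$ to be $\mathcal{H}$-orthogonal to these four modes, so
\[
v(x) = q_{11}(x_1^2 - 2) + q_{22}(x_2^2 - 2) + 2 q_{12} x_1 x_2,
\]
and $Q = (q_{ij})$ is nontrivial since $\|v\|_{\mathcal{H}} = 1$.

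It remains to establish that $Q$ is semi-negative definite, and for this I would apply Brunn's concavity principle to $\bar{K}_\tau$ (convex by \cite[Thm.~1.10]{HaslhoferKleiner_meanconvex}). Let
\[
A(x_1, x_2, \tau) := \mathrm{Area}\!\left(\bar{K}_\tau \cap (\{(x_1,x_2)\} \times \mathbb{R}^2)\right).
\]
Brunn's theorem asserts that $(x_1, x_2) \mapsto \sqrt{A(x_1, x_2, \tau)}$ is concave on its (convex) domain, which for $\tau \ll 0$ depending on $R$ contains any fixed ball $B_R \subset \mathbb{R}^2$, since $\bar{M}_\tau$ is a small graph over $\Gamma \cap B_{2\rho(\tau)}(0)$. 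Writing the cross-sectional boundary in polar form $r = \sqrt{2} + u(x_1, x_2, \theta)$ yields $A = 2\pi + \sqrt{2}\int_0^{2\pi} u\, d\theta + \tfrac{1}{2}\int_0^{2\pi} u^2\, d\theta$, and a Taylor expansion gives
\[
\sqrt{A(x_1, x_2, \tau)} - \sqrt{2\pi} = \frac{1}{2\sqrt{\pi}}\int_0^{2\pi} u\, d\theta + R(x_1, x_2, \tau),
\]
with the pointwise bound $|R(\cdot, \tau)| \leq C \int_0^{2\pi} u^2\, d\theta$ (Cauchy-Schwarz absorbing the $(\int u\, d\theta)^2$ contribution). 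The normalized function $\Phi_\tau := (\sqrt{A} - \sqrt{2\pi})/\|\hat{u}(\cdot, \tau)\|_{\mathcal{H}}$ is then concave on $B_R$, and the task reduces to identifying its limit.

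To pass to the limit, observe that since the Gaussian weight is bounded below on $B_R$ and $\hat{u} = u$ on $B_R \times S^1$ for $\tau \ll 0$, one has $\|u(\cdot, \tau)\|_{L^2(B_R \times S^1)} \leq C_R \|\hat{u}(\cdot, \tau)\|_{\mathcal{H}}$, whence $\|R\|_{L^1(B_R)} \leq C_R \|\hat{u}\|_{\mathcal{H}}^2$ and consequently $R/\|\hat{u}\|_{\mathcal{H}} \to 0$ in $L^1(B_R)$. Meanwhile, Fubini combined with $w_{i_m} \to v$ in $\mathcal{H}$ and the $\theta$-independence of $v$ gives $\int_0^{2\pi} w_{i_m}\, d\theta \to 2\pi v$ in $L^2(B_R)$. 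Combining, $\Phi_{\tau_{i_m}} \to \sqrt{\pi}\, v$ in $L^1(B_R)$. Since each $\Phi_\tau$ is concave on $B_R$ and the limit $\sqrt{\pi}\, v$ is continuous, concavity survives the $L^1_{\mathrm{loc}}$-limit---one averages the concavity inequality $\Phi_\tau(tp_1 + (1-t)p_2) \geq t\Phi_\tau(p_1) + (1-t)\Phi_\tau(p_2)$ over shrinking balls around $p_1, p_2$, sends $m \to \infty$ using $L^1$-convergence, and then lets the balls shrink using continuity of $v$. Thus $v$ is concave on $B_R$, hence on $\mathbb{R}^2$ as $R$ is arbitrary, and being a quadratic polynomial with Hessian $2Q$ this forces $Q$ to be negative semi-definite. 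I expect the most delicate step to be verifying that the quadratic remainder $R$ is genuinely negligible after dividing by $\|\hat{u}\|_{\mathcal{H}}$, which is precisely what the $L^2$-control of $u$ by $\|\hat{u}\|_{\mathcal{H}}$ on compact subsets of $\Gamma$ accomplishes.
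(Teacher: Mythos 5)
Your argument is correct and uses the same two pillars as the paper: finite-dimensional compactness in $\mathcal{H}_0$ combined with the orthogonality from Proposition \ref{orthogonality} to identify the limit as a quadratic in $(x_1,x_2)$, and then Brunn's concavity principle on the cross-sectional areas to control the sign. One small notational slip: the cross-sections should be taken through $\tilde{K}_\tau$ (the region enclosed by the \emph{rotated} flow $\tilde{M}_\tau$, which is the hypersurface that $u$ graphs over), not $\bar{K}_\tau$; since $S(\tau)\to I$ and both regions are convex this does not affect the substance.

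Where you diverge from the paper is in how the concavity is cashed out. The paper first diagonalizes $Q$ by an orthogonal change of coordinates in the $x_1x_2$-plane, then applies a single discrete concavity inequality for $\sqrt{\mathcal{A}}$ along each coordinate axis and integrates it over a fixed box to produce a scalar inequality that forces each diagonal entry $q_{ii}\le 0$. You instead show that the normalized quantity $\Phi_\tau = (\sqrt{\mathcal{A}}-\sqrt{2\pi})/\|\hat u\|_{\mathcal H}$, concave for each $\tau$, converges in $L^1_{\mathrm{loc}}$ to $\sqrt{\pi}\,v$, and then pass concavity through the $L^1_{\mathrm{loc}}$ limit (using averaging over shrinking balls together with continuity of the limit). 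This yields concavity of $v$ itself, hence $\mathrm{Hess}\,v = 2Q \le 0$, with no diagonalization needed. Your version is somewhat cleaner conceptually and gives the full statement directly, at the mild extra cost of the stability-of-concavity lemma; the paper's version keeps everything at the level of a single integral inequality. Both are correct, and your computation of the Taylor remainder $|R|\le C\int_0^{2\pi}u^2\,d\theta$ together with the $L^2$-control of $u$ by $\|\hat u\|_{\mathcal H}$ on compact sets is exactly what is needed to make the normalization legitimate.
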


\begin{proof}
By Proposition \ref{orthogonality} (orthogonality), we have
\begin{equation}
P_0\hat u \in \text{span}\{x_1^2-2,x_2^2-2,x_1x_2\}\subset \mathcal{H}_0.
\end{equation}
Therefore, every sequence $\tau_i \to -\infty$ has a subsequence $\{\tau_{i_m}\}$ such that 
\begin{equation}\label{L^2 convergence equation}
\lim_{\tau_{i_m} \to -\infty}\frac{\hat u(\cdot,\tau_{i_m})}{\|\hat u(\cdot,\tau_{i_m})\|_{\mathcal{H}}}=\mathcal{Q}(x_1,x_2),
\end{equation}
with respect to the ${\mathcal{H}}$-norm, where
\begin{equation}
\mathcal{Q}(x_1,x_2)=q_{11}(x_1^2-2)+ q_{22}(x_2^2-2)+2q_{12}x_1x_2,
\end{equation}
for some nontrivial matrix $Q=\{q_{ij}\}$. After an orthogonal change of coordinates in the $x_1x_2$-plane we can assume that $q_{12}=0$. Let us show that $q_{11} \leq 0$ (the same argument yields $q_{22}\leq 0$).\\
We denote by $\tilde{K}_\tau$ the region enclosed by $\tilde{M}_\tau$ and denote by $\mathcal{A}(x_1',x_2',\tau)$ the area of the cross section $\tilde{K}_\tau \cap \{(x_1,x_2)=(x_1',x_2')\}$. Explicitly, for $x_1^2+x_2^2\leq \rho(\tau)^2$ we have
\begin{align}\label{Area definition}
\mathcal{A}(x_1,x_2,\tau)&=\frac{1}{2}\int_0^{2\pi}\big(\sqrt{2}+u(\theta, x_1,x_2,\tau)\big)^2 d\theta\nonumber\\
&=2\pi +\sqrt{2} \int_0^{2\pi} u(\theta, x_1,x_2,\tau) d\theta + \tfrac12 \int_0^{2\pi} u(\theta, x_1,x_2,\tau)^2 d\theta\, .
\end{align}
By Brunn's concavity principle the function
\begin{equation}
(x_1,x_2)\mapsto\sqrt{\mathcal{A}(x_1,x_2,\tau)}
\end{equation}
is concave.  In particular, we have 
\begin{equation}
\sqrt{\mathcal{A}(x_1,x_2,\tau)}\geq \tfrac12 \sqrt{\mathcal{A}(x_1-2,x_2,\tau)}+\tfrac12 \sqrt{\mathcal{A}(x_1+2,x_2,\tau)}. 
\end{equation} 
This implies
 \begin{equation}\label{Brunn-Minkowski}
3\int_{-1}^{1}\int_{-1}^1\sqrt{\mathcal{A}}\, dx_2dx_1\geq \int_{-3}^{3}\int_{-1}^1\sqrt{\mathcal{A}}\,dx_2dx_1.
\end{equation}
Hence, for sufficiently large $-\tau_{i_m}$ combining \eqref{L^2 convergence equation}, \eqref{Area definition} and \eqref{Brunn-Minkowski} yields
 \begin{multline}\label{Mink112}
(3+o(1)) \|\hat u\|_{\mathcal{H}}\int_{-1}^{1}\int_{-1}^1\mathcal{Q}\, dx_2dx_1\\ 
\geq (1-o(1))\|\hat u\|_{\mathcal{H}}\int_{-3}^{3}\int_{-1}^1\mathcal{Q}\,dx_2dx_1-O(\|\hat u\|_{\mathcal{H}}^2).
\end{multline}
Indeed, given any compact intervals $I_1,I_2\subset\mathbb{R}$ setting $u_m=\hat{u}(\cdot,\tau_{i_m})$ we can compute
\begin{align}
\int_{I_1}\int_{I_2}\left(\sqrt{\mathcal{A}}-\sqrt{2\pi}\right) \, dx_2dx_1 &= \frac{1\pm o(1)}{2\sqrt{\pi}}\int_{I_1}\int_{I_2} \int_0^{2\pi} u_m \, d\theta dx_2 dx_1\nonumber\\
&=(1\pm o(1))\sqrt{\pi} \| u_m \|_{\mathcal{H}}\int_{I_1}\int_{I_2}\mathcal{Q}\, dx_2dx_1 \pm o(1)\| u_m \|_{\mathcal{H}},
\end{align}
which readily implies \eqref{Mink112}. Now, since $\|\hat u\|_{\mathcal{H}}>0$ and $\|\hat u\|_{\mathcal{H}}\to 0$, taking the limit as $m\rightarrow \infty$, we obtain
\begin{equation}
3\int_{-1}^{1}\int_{-1}^1\mathcal{Q}\, dx_2dx_1 \geq \int_{-3}^{3}\int_{-1}^1\mathcal{Q}\,dx_2dx_1.
\end{equation}
This implies
\begin{equation}
3q_{11}\int_{-1}^{1}\int_{-1}^1(x_1^2-2)\, dx_2dx_1 \geq q_{11}\int_{-3}^{3}\int_{-1}^1(x_1^2-2)\,dx_2dx_1.
\end{equation}
Since the integral of the left hand side is negative, while the integral on the right hand side is positive, we conclude that $q_{11} \leq 0$. This proves the proposition.
\end{proof}

\bigskip

Recall that in this section $M_t=\partial K_t$ denotes an ancient noncollapsed flow in $\mathbb{R}^4$ with dominant neutral mode, i.e. \eqref{neutral_dom} holds. Now, by the reduction from Section \ref{sec_coarse}, we can assume that its blowdown $\check{K}\equiv \check{K}_{t_0}$ is a convex cone that does not contain any line. Furthermore, rotating coordinates, we can arrange that  $\check{K}$ contains the positive $x_1$-axis, namely
\begin{equation}
\{ \lambda e_1\, |\, \lambda \geq 0\} \subseteq \check{K}.
\end{equation}
 
\begin{theorem}[blowdown in neutral mode]\label{thm_blowdown_neutral}
If $M_t=\partial K_t$ is as above, then its blowdown is a halfline, namely
\begin{equation}
\check{K}=\{ \lambda e_1\, |\, \lambda \geq 0\} .
\end{equation}
Moreover,
\begin{equation}
\lim_{\tau \to -\infty}\frac{\hat u(\cdot,\tau)}{\|\hat u(\cdot,\tau)\|_{\mathcal{H}}}= -c (x_2^2-2)
\end{equation}
in $\mathcal{H}$-norm, where $c=||x_2^2-2||^{-1}_{\mathcal{H}}$.
\end{theorem}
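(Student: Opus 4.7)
The plan is to translate the algebraic information about the limiting quadratic form $Q$ from Proposition \ref{L^2 convergence lemma} into geometric information about $\check K$ via Brunn's concavity principle. The key step is to establish the inclusion
\begin{equation*}
\check K \subseteq \ker Q .
\end{equation*}
Once this is in place, the theorem follows easily. Indeed, since $Q$ is nontrivial and semi-negative definite, $\dim \ker Q\leq 1$; the hypothesis $e_1\in \check K\subseteq \ker Q$ then forces $\ker Q=\mathbb R\cdot e_1$, i.e.\ $q_{11}=q_{12}=0$ and $q_{22}=-c<0$. Thus $\mathcal Q=-c(x_2^2-2)$ is uniquely determined independently of the subsequence, so the full family $\hat u(\cdot,\tau)/\|\hat u(\cdot,\tau)\|_{\mathcal H}$ converges to $-c(x_2^2-2)$ in $\mathcal H$-norm as $\tau\to -\infty$, yielding the second claim of the theorem. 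Finally, $\check K\subseteq \mathbb R\cdot e_1$ together with $\{\lambda e_1:\lambda\geq 0\}\subseteq \check K$ and the fact that $\check K$ contains no line (from the reductions of Section \ref{sec_coarse}) gives $\check K=\{\lambda e_1:\lambda\geq 0\}$, which is the first claim.

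\textbf{Proof of the key inclusion.} Fix $v\in \check K\setminus\{0\}$. Since $\check K$ is the recession cone of $K_{t_0}$, we have $x_0+sv\in K_{t_0}\subseteq K_{t_0-e^{-\tau}}$ for every $s\geq 0$ and every $\tau$, and the definition of the rescaled flow gives
\begin{equation*}
\{tv:t\geq 0\}\subseteq \bar K_\tau \quad \textrm{for every }\tau.
\end{equation*}
In particular, the cross-sectional area $\mathcal A(x_1,x_2,\tau)$ of $\bar K_\tau$ in the $x_3x_4$-plane satisfies $\mathcal A(tv,\tau)>0$ for all $t\geq 0$. Combining \eqref{Area definition} with the subsequential convergence $\hat u(\cdot,\tau_{i_m})/\|\hat u(\cdot,\tau_{i_m})\|_{\mathcal H}\to \mathcal Q$ (upgraded from $\mathcal H$-norm to uniform convergence on compact subsets, using parabolic regularity together with the graphical estimate \eqref{small_graph}), we obtain the asymptotic expansion
\begin{equation*}
\mathcal A(x_1,x_2,\tau_{i_m})= 2\pi+2\pi\sqrt 2\, \|\hat u(\cdot,\tau_{i_m})\|_{\mathcal H}\,\mathcal Q(x_1,x_2)+ o\!\left(\|\hat u(\cdot,\tau_{i_m})\|_{\mathcal H}\right)
\end{equation*}
on balls that grow with $\rho(\tau_{i_m})$. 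Substituting $x=tv$, using $\mathcal Q(tv)=t^2 v^T Q v-2(q_{11}+q_{22})$ and $\mathcal A(tv,\tau_{i_m})\geq 0$, we deduce
\begin{equation*}
v^T Q v\geq -\frac{1+o(1)}{\sqrt 2\, \|\hat u(\cdot,\tau_{i_m})\|_{\mathcal H}\, t^2}.
\end{equation*}
Choosing $t=t(\tau_{i_m})$ growing along the subsequence so that $t(\tau_{i_m})^2\|\hat u(\cdot,\tau_{i_m})\|_{\mathcal H}\to \infty$ while $t(\tau_{i_m})\leq \rho(\tau_{i_m})/|v|$, the right hand side tends to $0$, so $v^T Q v\geq 0$. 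Combined with the semi-negativity of $Q$ from Proposition \ref{L^2 convergence lemma}, this forces $v^T Q v=0$, equivalently $v\in \ker Q$, as desired.

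\textbf{Main obstacle.} The delicate quantitative ingredient is the existence of a scale $t(\tau_{i_m})\leq \rho(\tau_{i_m})/|v|$ along which $t^2\|\hat u\|_{\mathcal H}\to\infty$. This requires balancing the rough decay estimate $\|\hat u\|_{\mathcal H}\geq e^{\delta\tau/2}$ (valid for every $\delta>0$ by Lemma \ref{neutral mode lower bound}) against a sufficient growth rate of the graphical radius $\rho(\tau)$, which itself is controlled by the $\varepsilon$-cylindrical bubble-sheet structure from Section \ref{sec_coarse}. The same quantitative analysis yields the diameter bound of Corollary \ref{diameter bound rough_intro}, stating that $\bar M_\tau \cap\{x_1=0\}\subseteq B_{e^{\delta|\tau|}}(0)$ for every $\delta>0$ and $\tau\ll 0$; this diameter bound is the geometric shadow of the exclusion $v\notin \check K$ for every nonzero $v\perp e_1$, and its quantitative form is exactly what is needed (and will be used in subsequent papers) to carry through the end-game of the classification in the neutral-mode case.
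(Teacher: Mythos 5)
The overall strategy (use Brunn's concavity to transfer the algebra of $Q$ to the geometry of $\check K$ via the inclusion $\check K\subseteq\ker Q$, then conclude) is the same as the paper's. However, your proof of the key inclusion has a genuine gap, and the two issues you flag under ``Main obstacle'' are in fact fatal as stated.

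First, the asymptotic expansion $\mathcal A(x,\tau_{i_m})=2\pi+2\pi\sqrt 2\,\|\hat u\|_{\mathcal H}\mathcal Q(x)+o(\|\hat u\|_{\mathcal H})$ with error uniform ``on balls that grow with $\rho(\tau_{i_m})$'' is not justified. The convergence from Proposition~\ref{L^2 convergence lemma} is only in $\mathcal H$-norm, a Gaussian-weighted $L^2$ norm whose weight $e^{-|x|^2/4}$ suppresses contributions from $|x|\gg 1$. Parabolic regularity upgrades this to locally uniform convergence on compact sets, but it gives no pointwise control of the error relative to $\|\hat u\|_{\mathcal H}$ at scale $|x|\sim\rho(\tau)$, which is precisely the regime you need to evaluate $\mathcal A(tv,\tau)$ for your growing $t$.

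Second, even granting such an expansion, the scale $t(\tau_{i_m})\leq\rho(\tau_{i_m})/|v|$ with $t^2\|\hat u\|_{\mathcal H}\to\infty$ need not exist. By \eqref{small_graph} one has $\|\hat u\|_{\mathcal H}\lesssim\rho(\tau)^{-2}$, so $\rho^2\|\hat u\|_{\mathcal H}$ is bounded a priori; and in the neutral-mode regime there is no analogue of the bootstrapped lower bound $\rho\geq e^{-\tau/9}$ from the unstable mode section — the only available input is $\rho\to\infty$ at an unspecified rate together with $\|\hat u\|_{\mathcal H}\geq e^{\delta\tau}$, which is far from sufficient. Note also that Corollary~\ref{diameter bound rough_intro} cannot be invoked here, since in the paper it is a consequence of Theorem~\ref{thm_blowdown_neutral}, not an input to it.

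The paper sidesteps both problems by using the $\mathcal H$-convergence only on a fixed bounded region and letting Brunn's concavity carry the information outward. Concretely: since $\mathcal Q(0,0)=-2\,\mathrm{tr}(Q)>0$, the average of $\mathcal Q$ over $[0,1]^2$ is positive, while for $v\notin\ker Q$ there is a fixed $d$ such that its average over $[0,1]\times[d,d+1]$ is negative. Passing these averages through \eqref{Area definition} and the subsequential $\mathcal H$-convergence (both on \emph{bounded} regions) yields a point $r_mv+s_mw$ with $\mathcal A(r_mv+s_mw,\tau_m)>\mathcal A((r_m+d)v+s_mw,\tau_m)$. Concavity of $r\mapsto\sqrt{\mathcal A(rv+s_mw,\tau_m)}$ then forces $\mathcal A$ to vanish at some finite $r_*$, contradicting that the ray $r\mapsto r_mv_m+s_mw_m+\lambda S(\tau_m)\omega$ must remain inside $\tilde K_{\tau_m}$ when $\omega\in\check K$. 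So concavity of $\sqrt{\mathcal A}$, not a global comparison with $\mathcal Q$, is what propagates the local decrease to extinction; this avoids needing any control of $u$ or of $\rho$ at large scales. To repair your argument you would essentially need to replace the direct evaluation of $\mathcal A(tv,\tau)$ for large $t$ by this two-window-plus-concavity mechanism.
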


\begin{proof}By Proposition \ref{L^2 convergence lemma} given any sequence converging to $-\infty$ we can find a subsequence $\tau_m$ such that
\begin{equation}\label{subseq_conv}
\lim_{\tau_{m} \to -\infty}\frac{\hat u(\cdot,\tau_{i_m})}{\|\hat u(\cdot,\tau_{m})\|_{\mathcal{H}}}= q_{11}(x_1^2-2)+ q_{22}(x_2^2-2)+2q_{12}x_1x_2
\end{equation}
in $\mathcal{H}$-norm, where $Q=\{q_{ij}\}$ is a nontrivial semi-negative definite $2\times 2$-matrix. We will show that
\begin{equation}\label{contain_to_show}
\check{K}\subseteq \ker Q.
\end{equation}

To this end, observe that if $v$ is a unit vector in the $x_1x_2$-plane, denoting by $w$ the unit vector that is obtained from $v$ by a $\pi/2$-rotation, then
\begin{equation}\label{Q_1_ineq}  
\int_{0}^{1}\int_{0}^{1}\mathcal{Q}(rv+sw)drds = -\frac{5}{3}\mathrm{tr}(Q)+\frac{1}{2}w^T Q v \geq -\mathrm{tr}(Q) > 0.
\end{equation}
On the other hand, if $v\notin \ker Q$, we see that for sufficiently large $d=d(\sphericalangle(v,\ker Q))$, we have 
\begin{equation}\label{Q_2_ineq}  
\int_{0}^{1}\int_{d}^{d+1}\mathcal{Q}(rv+sw)drds\leq \frac{d^2}{2}v^TQv<0.
\end{equation}
Let $\tilde{K}_\tau$ be, as before,  the region enclosed by $\tilde{M}_\tau$.  Defining $\mathcal{A}$ as in \eqref{Area definition}, similarly as in the previous proof we have
\begin{equation}\label{lim_Q_2}
\int_{0}^{1}\int_{a}^{b}\frac{\sqrt{\mathcal{A}(rv+sw,\tau_m)}-\sqrt{2\pi}}{||\hat{u}||_{\mathcal{H}}}drds\rightarrow \sqrt{\pi} \int_{0}^{1}\int_{a}^{b}\mathcal{Q}(rv+sw)drds
\end{equation}
as $m\rightarrow \infty$. Combining \eqref{Q_1_ineq}, \eqref{Q_2_ineq} and \eqref{lim_Q_2} we see that for every $m$ sufficiently large, there exists $r_m,s_m\in [0,1]$ such that 
\begin{equation}\label{A_dec}
\mathcal{A}(r_mv+s_mw,\tau_m) >  \mathcal{A}((r_m+d)v+s_mw,\tau_m).\\
\end{equation}
Now, suppose towards a contradiction there is some $\omega\in \check K\setminus \ker Q$. Since $S(\tau)\rightarrow I$ as $\tau\rightarrow -\infty$, for all $-\tau$ sufficiently large we have
\begin{equation}
\sphericalangle (P_{12}(S(\tau)\omega),\ker Q)>\tfrac{1}{2}\sphericalangle(\omega,\ker Q),
\end{equation}
where $P_{12}$ denotes the projection to the $\mathrm{span}\{e_1,e_2\}$. Set
\begin{equation}
v_m:=\frac{P_{12}(S(\tau_m)\omega)}{|P_{12}(S(\tau_m)\omega)|}.
\end{equation}
Take $v=v_m$ in the previous discussion (and let $w_m$ be its $\pi/2$-rotation). It follows from Proposition \ref{blow_down_normals} (alternative description of blowdown) that for any $x\in\tilde{K}_\tau$ and $\omega\in \check{K}$ we have $x+\lambda S(\tau)\omega \in \tilde{K}_\tau$ for every $\lambda\in [0,\infty)$. Therefore, 
since $r_mv_m+s_mw_m\in \tilde{K}_{\tau_m}$, we see that 
\begin{equation}\label{stays_insisde}
r_mv_m+s_mw_m+\lambda S(\tau_m)\omega \in \tilde{K}_{\tau_m}\;\;\textrm{for every } \lambda \in [0,\infty).
 \end{equation}
On the other hand, thanks to Brunn's concavity principle, the function
\begin{equation}
r \mapsto \sqrt{\mathcal{A}(rv_m+s_m w_m,\tau)}
\end{equation}
is concave, for as long as it does not vanish. Together with \eqref{A_dec} this implies that for all $m$ sufficiently large, the area of the cross sections is decreasing for $r>r_m$, and vanishes at some finite $r_\ast$. This contradicts \eqref{stays_insisde}, as the ray would have nowhere to go. This proves \eqref{contain_to_show}.\\

Using the inclusion \eqref{contain_to_show}, since $\ker Q$ is $1$-dimensional, we infer that $\check K$ is $1$-dimensional. It follows that
\begin{equation}
\check{K}=\{ \lambda e_1\, |\, \lambda \geq 0\} ,
\end{equation}
and
\begin{equation}
\ker Q = \{ \textrm{$x_1$-axis}\}.
\end{equation}
Finally, since a normalized semi-negative $2\times 2$-matrix is uniquely characterized by its $1$-dimensional kernel, we see that subsequential convergence in \eqref{subseq_conv} in fact entails full convergence, and
\begin{equation}
\lim_{\tau \to -\infty}\frac{\hat u(\cdot,\tau)}{\|\hat u(\cdot,\tau)\|_{\mathcal{H}}}= -c (x_2^2-2),
\end{equation}
where $c=||x_2^2-2||_{\mathcal{H}}^{-1}$. This proves the theorem.
\end{proof}

\begin{corollary}[diameter of level sets]\label{diameter bound rough}
If $M_t=\partial K_t$ is as above, then given any $X$ and  $\delta>0$, assuming that the neutral mode dominates we have
\begin{equation*}
\bar M_\tau^X\cap \{x_1=0\} \subset B_{e^{-\delta\tau}}(0)
\end{equation*}
for sufficiently large $-\tau$.
\end{corollary}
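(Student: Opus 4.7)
The plan is to refine the proof of Theorem \ref{thm_blowdown_neutral} into a quantitative diameter bound by pairing it with the sub-exponential estimate of Lemma \ref{neutral mode lower bound}. Since $S^{X_0}(\tau)\to I$ as $\tau\to-\infty$ and small rotations of the cutting hyperplane $\{x_1=0\}$ can be absorbed into the exponent $\delta$, it suffices to prove the corresponding bound for the cross-section $\tilde M_\tau^X\cap\{x_1=0\}$ of the fine-tuned flow.

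Next, I would introduce the averaged area function
\[F(r,\tau):=\int_0^1\sqrt{\mathcal{A}(s,r,\tau)}\,ds,\]
which is concave in $r$ since $\sqrt{\mathcal{A}(s,r,\tau)}$ is jointly concave in $(s,r)$ by Brunn--Minkowski. Applying \eqref{lim_Q_2} with $v=e_2$, $w=e_1$, and $\mathcal{Q}(s,r)=-c(r^2-2)$, the $\mathcal H$-convergence from Theorem \ref{thm_blowdown_neutral} gives, for every fixed interval $[a,b]$,
\[\int_a^b\!(F(r,\tau)-\sqrt{2\pi})\,dr = \|\hat u(\cdot,\tau)\|_\mathcal{H}\left(\tfrac{-c}{\sqrt{4\pi}}\int_a^b(r^2-2)\,dr+o(1)\right).\]
Choosing $[a,b]=[-1,1]$ (where $\int(r^2-2)\,dr=-10/3<0$) and $[a,b]=[3/2,5/2]$ (where it equals $25/12>0$), the mean value theorem supplies $r_1\in[-1,1]$ and $r_2\in[3/2,5/2]$ with $F(r_1,\tau)\ge \sqrt{2\pi}+c_1\|\hat u(\cdot,\tau)\|_\mathcal{H}$ and $F(r_2,\tau)\le \sqrt{2\pi}-c_2\|\hat u(\cdot,\tau)\|_\mathcal{H}$ for universal constants $c_1,c_2>0$ and $\tau\ll 0$.

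Concavity of $F(\cdot,\tau)$ then bounds $F(r,\tau)$ for $r>r_2$ above by the linear extension of the secant line through $(r_1,F(r_1,\tau))$ and $(r_2,F(r_2,\tau))$, which crosses zero at some $r_\ast\le C/\|\hat u(\cdot,\tau)\|_\mathcal{H}$. Since $F(r,\tau)=0$ for $r\ge r_\ast$ forces $\mathcal{A}(s,r,\tau)=0$ for almost every $s\in[0,1]$, convexity of $\tilde K_\tau$ propagates this to $s=0$, bounding the positive-$x_2$ extent of $\tilde K_\tau\cap\{x_1=0\}$ by $C/\|\hat u\|_\mathcal{H}$; the negative direction is symmetric. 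The $(x_3,x_4)$-extent of this slice is controlled by a constant via the outer barriers $\tilde\Gamma_b$ from Lemma \ref{foli_lemma} where $\sqrt{x_1^2+x_2^2}\ge L_1$, and via \eqref{small_graph} in the complementary bounded region (noting $\rho(\tau)\ge L_1$ for $\tau\ll 0$).

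Finally, Lemma \ref{neutral mode lower bound} combined with neutral-mode dominance yields $\|\hat u(\cdot,\tau)\|_\mathcal{H}\ge \tfrac{1}{2}e^{\delta'\tau/2}$ for any $\delta'>0$ and $\tau\ll 0$. Given $\delta>0$, taking $\delta'=2\delta$ converts the extent bound into $Ce^{-\delta\tau}+O(1)\le e^{-\delta\tau}$ for $\tau$ sufficiently negative. The main technical obstacle I foresee lies in making the integrated asymptotic for $F$ rigorous: while the first-order formula $\sqrt{\mathcal{A}}\approx\sqrt{2\pi}+\tfrac{1}{\sqrt{4\pi}}\int u\,d\theta$ follows from a Taylor expansion exactly as in the proof of Theorem \ref{thm_blowdown_neutral}, one must verify that the quadratic term $\tfrac12\int u^2\,d\theta$ contributes only $o(\|\hat u\|_\mathcal{H})$ after integration over the bounded rectangles, which in turn follows from $\|\hat u\|_\mathcal{H}\to 0$ combined with the uniform $C^4$ control in \eqref{small_graph}.
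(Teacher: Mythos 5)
Your proof follows the same strategy as the paper's: use the $\mathcal{H}$-convergence to $-c(x_2^2-2)$ from Theorem~\ref{thm_blowdown_neutral}, detect a definite drop in $\sqrt{\mathcal{A}}$ of order $\|\hat u\|_{\mathcal H}$ as $x_2$ moves out by a bounded amount, combine this with the sub-exponential lower bound $\|\hat u\|_{\mathcal H}\geq e^{\delta\tau/2}$ of Lemma~\ref{neutral mode lower bound}, and then invoke Brunn's concavity principle to force the cross-sectional area to vanish at $x_2\sim e^{-\delta\tau}$. The implementation differs only cosmetically: the paper compares $\int\sqrt{\mathcal{A}}$ over two rectangles and then uses the mean-value theorem to pick a single point $(a,b)$ and applies concavity of $x_2\mapsto\sqrt{\mathcal{A}(a,x_2,\tau)}$ along the line $\{x_1=a\}$, whereas you first average over $x_1$ to form $F(r,\tau)$ and apply concavity to $F$ directly, then transfer back to $s=0$ by convexity (which is fine, since $F(r_*,\tau)=0$ forces $\mathcal{A}(\cdot,r_*,\tau)\equiv 0$ on $[0,1]$ by continuity). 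Your explicit handling of the rotation $S(\tau)\to I$, the $x_3x_4$-extent via the foliation, and the $-x_2$ direction by symmetry are details the paper leaves implicit; they are correct.

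One small slip: at the end you take $\delta'=2\delta$, which gives $\|\hat u\|_{\mathcal H}\geq \tfrac12 e^{\delta\tau}$ and hence an extent bound of $2Ce^{-\delta\tau}+O(1)$. This does not yield $\leq e^{-\delta\tau}$ for $\tau\ll 0$ unless $2C<1$. You should instead take $\delta'<2\delta$ (e.g.\ $\delta'=\delta$), giving $\|\hat u\|_{\mathcal H}\geq e^{\delta\tau/2}$ and extent $\leq Ce^{-\delta\tau/2}+O(1)$, which is $\leq e^{-\delta\tau}$ once $-\tau$ is large because the gap in the exponent absorbs the constant. This is exactly how the paper uses Lemma~\ref{neutral mode lower bound}.
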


\begin{proof}
By Theorem \ref{thm_blowdown_neutral} we have
\begin{equation}
\lim_{\tau \to -\infty}\frac{\hat u(\cdot,\tau)}{\|\hat u(\cdot,\tau)\|_{\mathcal{H}}}=-\frac{(x_2^2-2)}{||x_2^2-2||_{\mathcal{H}}}. 
\end{equation}
Hence, arguing similarly as above, for sufficiently large $-\tau$ we obtain
\begin{equation}
\int_{1}^{2}\int_{-1}^1\sqrt{\mathcal{A}} dx_2dx_1 - \sup_{s \in [-1,1]} \int_{1+s}^{2+s}\int_{9}^{11}\sqrt{\mathcal{A}} dx_2dx_1 \geq \big (c+o(1)\big) \|\hat u\|_{\mathcal{H}},
\end{equation}
where $c>0$ is a numerical constant. Moreover, by Lemma \ref{neutral mode lower bound} (rough decay estimate) we know that
\begin{equation}
\|\hat u\|_{\mathcal{H}}\geq e^{\delta \tau}.
\end{equation}
Therefore, there is some $a_+(\tau)\in (1,2)\times (-1,1)$, such that
\begin{equation}
\sqrt{\mathcal{A}(a_+(\tau),\tau)}-\sup_{s \in [-1,1]}\sqrt{\mathcal{A}(a_+(\tau)+se_1+10e_2,\tau)}\geq c e^{\delta\tau}.
\end{equation}
In particular, for all $\tau$ sufficiently negative this implies that
\begin{equation}
\sqrt{\mathcal{A}(a_+(\tau),\tau)}- \sqrt{\mathcal{A}(a_+(\tau)+10\langle e_2,S(\tau)e_2\rangle^{-1}S(\tau)e_2,\tau)}\geq c e^{\delta\tau}.
\end{equation} 
Hence, the concavity of $(x_1,x_2)\mapsto\sqrt{\mathcal{A}(x_1,x_2,\tau)}$ yields 
\begin{equation}
\mathcal{A}(a_+(\tau)+C e^{-\delta \tau}S(\tau)e_2,\tau)=0,
\end{equation}
for some constant $C<\infty$. Finally, recall that $\tilde{M}_\tau^X$ is a convex graph with respect to the height function $S(\tau)e_1$, and that the level sets of a convex graph monotonically increase. Observing also that $\langle a_+(\tau),S(\tau)e_1\rangle\geq 0$ we thus infer that
\begin{equation}\label{rough level set diameter}
\mathcal{A}(C e^{-\delta \tau}S(\tau)e_2,\tau)=0.
\end{equation}
Similarly, we can show $\mathcal{A}( -C e^{-\delta \tau}S(\tau)e_2,\tau)=0$. This proves the corollary.
\end{proof}

\bigskip

\section{Bubble-sheet analysis in the unstable mode}\label{sec_unstable}

In this section, we consider ancient noncollapsed flows with a bubble sheet tangent flow at $-\infty$, whose unstable mode is dominant. We will show that each such flow has some nonvanishing expansion parameters associated to it. This is the content of the  fine bubble-sheet theorem (Theorem \ref{thm Neck asymptotic}) and the nonvanishing expansion theorem (Theorem \ref{non.vanish.coeffs}).\\

More precisely, throughout  this section we assume the tilted rescaled flow $\tilde{M}^{X_0}_{\tau}$ around some point $X_0$, has a dominant unstable mode, i.e.
\begin{equation}\label{plus_dom_tilt}
U_0+U_{-} \leq C\rho^{-1} U_{+}.
\end{equation}

\bigskip
 
\subsection{Analysis of the untilted flow}  When dealing with the unstable mode case, it is convenient to work with the renormalized flow $\bar{M}^{X_0}_{\tau}$ itself, and not with its tilted version $\tilde{M}^{X_0}_{\tau}$. Our first task is to show that when dealing with the \textit{untilted} renormalized flow around \textit{any} point $X$, the unstable mode is still the dominant one. \\

Given any point $X$, for $\tau\ll 0$ (depending only on $Z(X)$) we can write $\bar M^X_\tau$ as a graph of a function $\bar u(\cdot,\tau)$ over $\Gamma \cap B_{2\rho(\tau)}(0)$, where $\rho=\rho^X(\cdot,\tau)$, which satisfies \eqref{univ_fns} and \eqref{small_graph}. We will work with the truncated function
\begin{equation}
\check u=\bar{u} \chi(r/\rho(\tau)).
\end{equation}

\begin{proposition}[evolution of truncated graph function]\label{refined error estimate} There exists some universal constant $C<\infty$ such that
\begin{equation}
\| (\partial_\tau - \mathcal{L})\check u\|_{\mathcal{H}}\leq C\rho^{-1}\|\check u\|_\mathcal{H}
\end{equation}
for $\tau\ll 0$.
\end{proposition}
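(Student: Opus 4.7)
The plan is to mimic the arguments of Lemma~\ref{Error u-PDE} and Proposition~\ref{Error hat.u-PDE 1} in the untilted setting, where the situation is actually simpler because there is no rotation to differentiate.

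First, since $\bar M_\tau^X$ itself moves by rescaled mean curvature flow (no composition with a time-dependent rotation $S(\tau)$), the standard computation expressing $\bar M_\tau^X$ as a normal graph of $\bar u$ over $\Gamma\cap B_{2\rho(\tau)}(0)$ and linearizing the rescaled MCF equation around $\Gamma$ yields
\begin{equation*}
\partial_\tau \bar u = \mathcal{L}\bar u + \bar E,
\qquad |\bar E|\leq C\rho^{-1}\bigl(|\bar u|+|\nabla \bar u|\bigr).
\end{equation*}
This is identical to Lemma~\ref{Error u-PDE}, except that the term $\langle A(\tau)x,\nu_\Gamma\rangle$ is absent because $S(\tau)\equiv I$. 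The pointwise bound on $\bar E$ uses only the small graph bound $\|\bar u\|_{C^4}\leq \rho(\tau)^{-2}$ on $\Gamma\cap B_{2\rho(\tau)}(0)$, which controls the quadratic and higher-order terms in the mean curvature and cylindrical-geometry expansions.

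Next, applying the cutoff $\chi(r/\rho(\tau))$ and computing as in Proposition~\ref{Error hat.u-PDE 1} (again with no rotation contribution), one gets
\begin{equation*}
\partial_\tau \check u=\mathcal{L}\check u + \check E,
\end{equation*}
where $\check E$ consists of $\bar E\,\chi(r/\rho(\tau))$ together with commutator terms of the form $\rho^{-1}\bar u\,\chi'$, $\rho^{-2}\bar u\,\chi''$, $\rho^{-1}\partial_r \bar u\,\chi'$ and $\rho^{-1}r\,\bar u\,\chi'$ coming from $[\mathcal{L},\chi]$ and $\partial_\tau \chi$ (here one uses the bounds $|\rho'|\leq \rho$ and $|\chi^{(k)}|\leq C$). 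Consequently, on $r\leq \rho/2$ we have $|\check E|\leq C\rho^{-1}(|\bar u|+|\nabla \bar u|)$, while on $\rho/2\leq r\leq \rho$ we have $|\check E|\leq C|\bar u|+C\rho^{-1}|\nabla \bar u|$.

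Finally, to pass to the $\mathcal{H}$-norm estimate, I would use the untilted analog of Proposition~\ref{Gaussian density analysis}. Its proof is based entirely on the Gaussian area estimate (Proposition~\ref{prop_calibration}) combined with the foliation from Lemma~\ref{foli_lemma}, and only requires that the graph lies inside the foliation region $\Omega$. Since $\bar u$ satisfies the same pointwise smallness assumption as $u$, the identical argument yields
\begin{equation*}
\int_{\Gamma\cap\{r\leq L\}}e^{-|x|^2/4}|\nabla \bar u|^2\leq C\!\!\int_{\Gamma\cap\{r\leq L/2\}}e^{-|x|^2/4}\bar u^2,\quad
\int_{\Gamma\cap\{L/2\leq r\leq L\}}e^{-|x|^2/4}\bar u^2\leq \frac{C}{L^2}\!\!\int_{\Gamma\cap\{r\leq L/2\}}e^{-|x|^2/4}\bar u^2.
\end{equation*}
Applied with $L=\rho(\tau)$, the gradient inequality absorbs the $|\nabla \bar u|$ part of $\check E$ (losing a factor $\rho^{-1}$), and the annular $L^2$ bound absorbs the $|\bar u|$ contribution on $\rho/2\leq r\leq \rho$ (gaining the needed factor $\rho^{-2}$, so overall $\rho^{-1}$). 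Summing gives $\|\check E\|_{\mathcal{H}}\leq C\rho^{-1}\|\check u\|_{\mathcal{H}}$, which is the claim.

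The only potential subtlety is to ensure the inverse Poincaré argument goes through without the orthogonality condition of Proposition~\ref{orthogonality}; but orthogonality was only invoked in Lemma~\ref{lemma_rotation_est} to bound $|A(\tau)|$, and here there is no $A(\tau)$ to bound. Apart from this clean simplification, the proof is a direct transcription of the tilted one.
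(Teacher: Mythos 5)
Your proposal is correct and takes essentially the same route as the paper, which simply cites Lemma \ref{Error u-PDE} and Proposition \ref{Error hat.u-PDE 1} with $A(\tau)=0$. Your additional observation — that the inverse Poincar\'e inequality applies to the untilted graph because its proof is purely geometric (Gaussian area plus foliation) and never invokes the orthogonality condition — is accurate and is exactly the implicit justification behind the paper's one-line proof.
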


\begin{proof} This follows from repeating the argument in Lemma \ref{Error u-PDE} and Proposition \ref{Error hat.u-PDE 1} with $A(\tau)=0$.
\end{proof}

Now, the functions 
\begin{align}
&\check U_+(\tau) := \|P_+ \check{u}(\cdot,\tau)\|_{\mathcal{H}}^2, \nonumber\\ 
&\check U_0(\tau) := \|P_0 \check{u}(\cdot,\tau)\|_{\mathcal{H}}^2,\\ 
&\check U_-(\tau) := \|P_- \check{u}(\cdot,\tau)\|_{\mathcal{H}}^2, \nonumber
\end{align}
satisfy the evolution inequalities \eqref{U_PNM_system}. Applying the Merle-Zaag ODE-Lemma (Lemma \ref{mz.ode}), we infer that there are universal constants $C_0,R<\infty$ (where we can assume that $R>10^3$) such that for every $X$ either 

\begin{equation}\label{vac_alter}
\check U_{+}+\check U_{-}=o(\check U_0),
\end{equation} 
or
\begin{equation}\label{plus_untilt_dom}
\check U_0+\check U_{-} \leq C_0\rho^{-1} \check U_{+}
\end{equation}
whenever $\rho\geq R$.

\begin{proposition}[dominant mode] The unstable mode is dominant for the untilted flow. More precisely, the inequality \eqref{plus_untilt_dom} holds for every $X$.
\end{proposition}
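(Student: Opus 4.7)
The plan is to first establish \eqref{plus_untilt_dom} at the distinguished base point $X_0$ by comparing the tilted and untilted graphs, and then propagate the conclusion to every $X \in \mathcal{M}$ by an open/closed argument on the Merle-Zaag dichotomy.

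First I would exploit that the rotation $S(\tau) \in \mathrm{SO}(4)$ has $A(\tau) = S'S^{-1}$ satisfying $A_{12} = A_{34} = 0$, so its generators only mix $\{x_1,x_2\}$-directions with $\{x_3,x_4\}$-directions. A direct calculation then shows that the difference $\hat u^{X_0} - \check u^{X_0}$ lies, to leading order, in the four-dimensional subspace
\[
V_1 := \mathrm{span}\{x_1\cos\theta,\; x_1\sin\theta,\; x_2\cos\theta,\; x_2\sin\theta\} \subset \mathcal{H}_0.
\]
This immediately gives $P_+\hat u = P_+\check u$, $P_-\hat u = P_-\check u$, and $P_0\hat u \in V_2 := \mathrm{span}\{x_1^2-2,\, x_2^2-2,\, x_1x_2\}$, so that $\hat U_\pm = \check U_\pm$ while $\hat U_0 = \|P_{V_2}\check u\|_{\mathcal{H}}^2$.

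I would then control the missing $V_1$-piece $\|P_{V_1}\check u\|_{\mathcal{H}}^2 \approx c|B(\tau)|^2$, where $B(\tau) := S(\tau) - I$. After normalizing $S$ so that $S(\tau) \to I$ as $\tau \to -\infty$ (permissible because the residual ambiguity lies in the stabilizer $\mathrm{SO}(2) \times \mathrm{SO}(2)$ of $\Gamma$, which acts isometrically on $\mathcal{H}$ and preserves the Merle-Zaag decomposition), the bound $|A(\tau)| \leq C\rho^{-1}\|\hat u\|_{\mathcal{H}}$ from Lemma \ref{lemma_rotation_est} can be integrated. Using the exponential upper bound $\|\hat u(s)\|_{\mathcal{H}} \leq Ce^{\alpha s}$ (some $\alpha \in [\tfrac{1}{2},1]$) that follows from \eqref{plus_dom_tilt} via the evolution of $\hat U_+$ (whose eigenvalues of $\mathcal L$ lie in $[\tfrac{1}{2},1]$), the integral is dominated by its value at $\tau$, producing $|B(\tau)| \leq C\rho(\tau)^{-1}\|\hat u(\tau)\|_{\mathcal{H}}$. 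Consequently $\|P_{V_1}\check u\|_{\mathcal{H}}^2 \leq C\rho^{-2}\check U_+$, and combining with $\hat U_0 + \hat U_- \leq C\rho^{-1}\hat U_+$ yields $\check U_0 + \check U_- \leq C'\rho^{-1}\check U_+$, i.e., \eqref{plus_untilt_dom} at $X_0$.

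To extend to every $X$, I would show that each of the two alternatives in the Merle-Zaag dichotomy for $\check u^X$ is an open condition in $X$. Indeed, the spacetime identity $\bar M^X_\tau = \bar M^{X_0}_\tau - e^{\tau/2}(x-x_0)$ implies $\|\check u^{X'}(\cdot,\tau) - \check u^X(\cdot,\tau)\|_{\mathcal{H}} \leq Ce^{\tau/2}|X-X'|$ for $X'$ close to $X$, and this perturbation is negligible both compared to the lower bound $\check U_0^X \geq e^{\delta\tau}$ forced by dominant neutral mode (via the Lemma \ref{neutral mode lower bound} argument applied to the untilted $\check u^X$, which is insensitive to tilting) and to the exponentially decaying $\check U_+^X$ in the dominant unstable case. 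Hence the loci where \eqref{vac_alter} and \eqref{plus_untilt_dom} hold are both open, disjoint, and cover the connected $\mathcal{M}$; one of them must therefore coincide with $\mathcal{M}$, and the first two paragraphs pin this down as the one containing $X_0$. The main obstacle is expected to be the careful verification of the $B(\tau)$-integral bound together with the normalization $S(\tau)\to I$, and ensuring that the openness estimate holds uniformly enough to capture the asymptotic nature of the Merle-Zaag alternatives.
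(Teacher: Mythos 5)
Your first-half strategy—comparing the tilted and untilted graph functions directly and trying to show that the tilted and untilted spectral projections agree up to a controlled $V_1$-piece—is a genuinely different route from the paper's, and it contains a gap. The paper never attempts the precise estimate $|S(\tau)-I|\leq C\rho^{-1}\|\hat u(\tau)\|_{\mathcal H}$ that your argument needs. That estimate does not follow from integrating $|A(\tau)|\leq C\rho^{-1}\|\hat u(\tau)\|_{\mathcal H}$: integrating gives $|S(\tau)-I|\lesssim\int_{-\infty}^{\tau}\rho(s)^{-1}\|\hat u(s)\|_{\mathcal H}\,ds$, and to dominate this by $\rho(\tau)^{-1}\|\hat u(\tau)\|_{\mathcal H}$ you would need $\|\hat u(\tau)\|_{\mathcal H}$ to be bounded below by (roughly) the same exponential that bounds it above. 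But in the dominant-unstable case the eigenvalues of $\mathcal L$ on $\mathcal H_+$ range over $\{\tfrac12,1\}$, so $U_+$ is only pinched between $ce^{(2+\epsilon)\tau}$ and $Ce^{(1-\epsilon)\tau}$; the integral of $\rho^{-1}\|\hat u\|$ can therefore be much larger than $\rho^{-1}\|\hat u(\tau)\|$, and the $V_1$-piece is not controlled by $\rho^{-2}\check U_+$. In short, you are trying to prove a sharper structural statement than is available at this stage.

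The paper avoids this entirely by exploiting the Merle–Zaag alternative rather than matching decay rates. From $U_0+U_-\leq C\rho^{-1}U_+$ one gets $U_+\leq Ce^{\frac{9}{10}\tau}$ and hence, via Lemma~\ref{lemma_rotation_est}, the crude (and sufficient) bound $|S^{X_0}(\tau)-I|\leq Ce^{\frac{9}{20}\tau}$; this alone gives $\check U\leq Ce^{\frac{9}{20}\tau}$. That is all that is needed, because if instead the neutral mode were dominant for the untilted flow, the argument of Lemma~\ref{neutral mode lower bound} would force $\check U\geq e^{\delta\tau}$ for every $\delta>0$—incompatible with any exponential decay. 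So there is no need to relate $|S(\tau)-I|$ to $\|\hat u(\tau)\|$ at the same exponent. Your second-half open/closed argument for passing from $X_0$ to a general $X$ is essentially a reformulation of the paper's observation that the renormalized flows around different centers merge as $\tau\to-\infty$ (the difference of the truncated graph functions is $O(e^{\tau/2}|X-X'|)$), so that is fine, though somewhat more elaborate than necessary; the dichotomy is an asymptotic condition and the merging directly forces both centers into the same alternative.
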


\begin{proof}
Let us first show that the statement hold for $X=X_0$. By assumption, the tilted flow with center $X_0$ has dominant unstable mode, i.e. 
\begin{equation}
U_0+U_{-} \leq C\rho^{-1} U_{+}.
\end{equation}
Together with the evolution inequalities \eqref{U_PNM_system} this also implies 
\begin{equation}\label{der_plus_tilt}
\frac{d}{d\tau} U_+(\tau) \geq (1-C\rho^{-1}) U_+.
\end{equation}
Note that \eqref{der_plus_tilt} implies
\begin{equation}\label{rough unstable dynamics}
\frac{d}{d\tau} e^{-\frac{9}{10}\tau} U_+(\tau) \geq 0,
\end{equation}
for sufficiently large $-\tau$. Hence,
\begin{equation}\label{U_decay}
\| u(\cdot,\tau)\|_{\mathcal{H}}^2=(1+o(1))U_+ \leq  Ce^{\frac{9}{10}\tau}.
\end{equation} 
Therefore, Lemma \ref{lemma_rotation_est} yields that 
\begin{equation}\label{S_decay}
|S^{X_0}(\tau)-I|\leq Ce^{\frac{9}{20}\tau}.
\end{equation}
Recall that 
\begin{equation}
\| u(\cdot,\tau)\|_{\mathcal{H}}^2=\frac{1}{(4\pi)^{3/2}}\int_\Gamma \hat{u}_{S^{X_0}(\tau)}(\cdot,\tau)^2 e^{-\frac{|x|^2}{4}},
\end{equation}
and note that for $X=X_0$ we have $\check{u}=\hat{u}_I$, hence
\begin{equation}
\| \check{u}(\cdot,\tau)\|_{\mathcal{H}}^2=\frac{1}{(4\pi)^{3/2}}\int_\Gamma \hat{u}_I(\cdot,\tau)^2 e^{-\frac{|x|^2}{4}}.
\end{equation}
Combining \eqref{U_decay} and \eqref{S_decay} we thus infer that the untilted flow satisfies
\begin{equation}\label{untilt_decay}
\| \check{u}(\cdot,\tau)\|_{\mathcal{H}}^2 \leq Ce^{\frac{9}{20}\tau}.
\end{equation}  
On the other hand, if we had $\check U_{+}+\check U_{-}=o(\check U_0)$, then arguing as in Lemma \ref{neutral mode lower bound}  we would see that $\| \check{u}(\cdot,\tau)\|_{\mathcal{H}}^2 \geq e^{\delta \tau}$ for every $\delta>0$ and $-\tau$ sufficiently large, which is inconsistent with \eqref{untilt_decay}. 
Thus, for $X=X_0$, we indeed get
 \begin{equation}
 \check U_0+\check U_{-} \leq C\rho^{-1} \check U_{+}.
 \end{equation}
Finally, any neck centered at a general point $X$ merges with the neck centered at $X_0$ as $\tau\to -\infty$. Thus, \eqref{plus_untilt_dom} holds for every $X$.
\end{proof}

Recapping, for any center $X$ we therefore have
\begin{equation}\label{plus_dom_untilt}
\check U_0+\check U_{-} \leq C_0\rho^{-1} \check U_{+},
\end{equation}
and, thanks to the evolution inequalities \eqref{U_PNM_system}, also
\begin{equation}\label{der_plus_untilt}
\frac{d}{d\tau} \check U_+ \geq (1 - C_0\rho^{-1}) \, \check U_+,
\end{equation}
whenever $\rho\geq R$, where $R\geq 10^3$ and $C_0$ are some universal constants. Increasing $R$ further, we can also assume that $R\geq 10C_0$.

\bigskip

\subsection{Graphical radius}  The goal of this section is to prove Proposition \ref{rough barrier}, which gives a lower bound for the optimal graphical radius.\\

We now fix some $\eps<1/R$ in the definition of the bubble-sheet scale (Definition \ref{eps_cyl}). In what follows, $C<\infty$ and $\mathcal{T}>-\infty$ will denote constants that are allowed to change from line to line, but depend \textit{only} on an upper bound of $Z(X)$. In particular, our initial choice of graphical radius satisfies $\rho^X(\tau)\geq R$ for $\tau\in (-\infty,\mathcal{T}]$ and \eqref{plus_dom_untilt} and \eqref{der_plus_untilt} hold for all $\tau\in (-\infty,\mathcal{T}]$.

\begin{lemma}[decay estimate]\label{refined estimate}
There exists some constant $C<\infty$, depending only on an upper bound for $Z(X)$, such that 
\begin{equation}\label{refined L^2 estimate}
\|\check u(\cdot,\tau)\|_{\mathcal{H}} \leq C e^{\frac{9}{20}\tau}
\end{equation}
and
\begin{equation}\label{refined C^10 estimate}
\|\check u(\cdot,\tau)\|_{C^{10}(\{r\leq 100\})}\leq C e^{\frac{9}{20}\tau}
\end{equation}
hold for $\tau \leq \mathcal{T} $.
\end{lemma}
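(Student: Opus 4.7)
The plan is to first establish the $\mathcal{H}$-norm decay by integrating the differential inequality from the previous section, and then to upgrade the $L^2$-type bound to the pointwise $C^{10}$ bound via standard parabolic regularity.

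For the first estimate, recall that by the preceding analysis we have the dominant-mode inequality \eqref{plus_dom_untilt} and the differential inequality \eqref{der_plus_untilt} on all of $(-\infty,\mathcal{T}]$, with universal constants $R\geq 10C$. In particular \eqref{der_plus_untilt} gives
\begin{equation*}
\frac{d}{d\tau}\check U_+(\tau)\geq \Bigl(1-\frac{C}{R}\Bigr)\check U_+(\tau)\geq \tfrac{9}{10}\check U_+(\tau),
\end{equation*}
so the function $\tau\mapsto e^{-\frac{9}{10}\tau}\check U_+(\tau)$ is nondecreasing on $(-\infty,\mathcal{T}]$. Integrating from $\tau$ up to $\mathcal{T}$ yields $\check U_+(\tau)\leq e^{\frac{9}{10}(\tau-\mathcal{T})}\,\check U_+(\mathcal{T})$. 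The value $\check U_+(\mathcal{T})$ is controlled by $\|\check u(\cdot,\mathcal{T})\|_{\mathcal H}^2$, which by \eqref{small_graph} and the definition of $\mathcal T$ is bounded purely in terms of $Z(X)$. Combining with \eqref{plus_dom_untilt}, which gives $\|\check u\|_{\mathcal H}^2=(1+o(1))\check U_+$, produces
\begin{equation*}
\|\check u(\cdot,\tau)\|_{\mathcal H}^2\leq C(Z(X))\,e^{\frac{9}{10}\tau},
\end{equation*}
which is the claimed $e^{\frac{9}{20}\tau}$-decay.

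For the $C^{10}$ estimate on the fixed compact region $\{r\leq 100\}$, I would appeal to standard interior Schauder estimates for the quasilinear parabolic equation solved by $\bar u$. Decreasing $\mathcal T$ if necessary we may assume $\rho^X(\tau)\geq 400$ for $\tau\leq \mathcal T$, so that the cutoff satisfies $\chi(r/\rho(\tau))\equiv 1$ on $\{r\leq 200\}$; thus $\check u=\bar u$ there, and $\bar u$ satisfies the \emph{untruncated} graphical rescaled MCF equation on $\{r\leq 200\}$. Moreover, by \eqref{small_graph} we have $\|\bar u\|_{C^4}\leq \rho(\tau)^{-2}$, so the equation is uniformly parabolic with smooth coefficients having bounds depending only on $Z(X)$.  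Since the Gaussian weight $e^{-|x|^2/4}$ is bounded below on $\{r\leq 200\}$,
\begin{equation*}
\sup_{s\in[\tau-1,\tau]}\|\bar u(\cdot,s)\|_{L^2(\{r\leq 200\})}^2\leq C\sup_{s\in[\tau-1,\tau]}\|\check u(\cdot,s)\|_{\mathcal H}^2\leq C\,e^{\frac{9}{10}\tau},
\end{equation*}
where in the last step we used the decay established above together with the monotonicity $\sup_{s\leq \tau}e^{9s/10}=e^{9\tau/10}$.  Standard interior parabolic regularity then promotes this $L^2$-bound on $\{r\leq 200\}\times[\tau-1,\tau]$ to a $C^{10}$-bound on $\{r\leq 100\}\times\{\tau\}$:
\begin{equation*}
\|\check u(\cdot,\tau)\|_{C^{10}(\{r\leq 100\})}=\|\bar u(\cdot,\tau)\|_{C^{10}(\{r\leq 100\})}\leq C\,e^{\frac{9}{20}\tau}.
\end{equation*}

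The two genuine things to check are (i) the initial-value bound $\check U_+(\mathcal T)\leq C(Z(X))$, which is immediate from \eqref{small_graph} once $\rho^X(\mathcal T)\geq R$ is arranged in the definition of $\mathcal T$, and (ii) the uniform parabolicity on a fixed compact set, which likewise follows from \eqref{small_graph}. Neither constitutes a serious obstacle; the main point is simply to package the argument sketched already for $X=X_0$ in the proof of the previous proposition into a quantitative statement uniform in $X$, with upper bound on $Z(X)$.
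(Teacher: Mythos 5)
Your argument is essentially the paper's own proof. For the $\mathcal{H}$-estimate you use \eqref{der_plus_untilt} to deduce monotonicity of $e^{-\frac{9}{10}\tau}\check U_+$, bound $\check U_+(\mathcal{T})$ via \eqref{small_graph}, and then use dominance of the unstable mode \eqref{plus_dom_untilt} to pass from $\check U_+$ to $\|\check u\|_{\mathcal{H}}$ — this is exactly what the paper does. For the $C^{10}$ estimate the paper invokes its appendix Theorem B.1 (a De~Giorgi/Nash/Moser-type local $L^\infty$ estimate of Stampacchia type) followed by the implicit further bootstrap; you phrase it as ``interior Schauder estimates for the quasilinear parabolic equation solved by $\bar u$'' on the region where $\check u=\bar u$. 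The substance is the same: both routes first pass from the Gaussian-weighted $L^2$ bound to a local unweighted $L^2$ bound (using that the weight is bounded below on $\{r\le 200\}$) and then bootstrap to $C^{10}$ via interior parabolic regularity, with coefficient bounds coming from \eqref{small_graph}. The only very minor imprecision is attributing the $L^2\to L^\infty$ step directly to Schauder theory — formally one first needs an $L^\infty$ bound (which Theorem B.1 supplies) before Schauder applies — but this is the kind of standard bootstrap the paper itself compresses into one sentence.
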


\begin{proof}
Since $\bar M^X_{\mathcal{T}}$ is an $\varepsilon$-graph over $\Gamma\cap B_{2\rho^X}(0)$, we get $\|\check u(\cdot,\mathcal{T})\|_{\mathcal{H}}\leq   1 $.
Note that  \eqref{der_plus_untilt} implies
$
\tfrac{d}{d\tau} (e^{-\frac{9}{10}\tau}\check U_+) \geq 0
$
 for $\tau \leq \mathcal{T}$. Hence,
 \begin{equation}
 e^{-\frac{9}{10}
\tau}\check U_+ \leq e^{-\frac{9}{10}
\mathcal{T}}\|\check u(\cdot,\mathcal{T})\|_{\mathcal{H}}^2\leq C.
\end{equation}
This gives the first inequality \eqref{refined L^2 estimate}. The second inequality \eqref{refined C^10 estimate} follows from \eqref{refined L^2 estimate} by parabolic estimates. Indeed, one can first establish an $L^\infty$-bound via De Giorgi-Nash-Moser iteration (more precisely, one can apply Theorem \ref{Stampacchia estimate} with $k=0$), and then upgrade this to  a $C^{10}$-bound via standard Schauder theory.
\end{proof}

\bigskip

\begin{proposition}[improved graphical radius]\label{rough barrier}
There exists some $\mathcal{T}>-\infty$, depending only on an upper bound for $Z(X)$, such that 
\begin{equation}
\bar \rho(\tau)=e^{-\frac{1}{9}\tau}
\end{equation}
is a graphical radius function satisfying  \eqref{univ_fns} and \eqref{small_graph} for $\tau \leq \mathcal{T}$.
\end{proposition}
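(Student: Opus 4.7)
The plan is to combine the pointwise decay estimate on the fixed ball $\{r\leq 100\}$ provided by Lemma \ref{refined estimate} with the inner/outer barrier foliation constructed in Section \ref{sec_fol_barriers}, in order to push the graphical radius out from the fixed scale $100$ to the growing scale $e^{-\tau/9}$. The first observation is that Lemma \ref{refined estimate} already yields
\begin{equation*}
\|\check u(\cdot,\tau)\|_{C^{10}(\{r\leq 100\})} \leq Ce^{9\tau/20},
\end{equation*}
which is exponentially stronger than the pointwise bound $e^{2\tau/9} = \bar\rho(\tau)^{-2}$ demanded by \eqref{small_graph}; hence the only real task is to extend graphical control from $\{r\leq 100\}$ to $\{r\leq 2e^{-\tau/9}\}$.

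For each $\tau\leq \mathcal{T}$, I would select foliation parameters $a(\tau),b(\tau)\sim e^{-\tau/9}$ and use the ADS-inner barrier $\Gamma_{a(\tau)}$ together with the KM-outer barrier $\tilde\Gamma_{b(\tau)}$ to sandwich $\bar M^X_\tau$ in a thin tubular neighbourhood of $\Gamma$. The key point is that the assumption $Z(X)\leq 1$ forces $\bar M^X_\tau\to \Gamma$ in $C^{10}_{\mathrm{loc}}$ as $\tau\to -\infty$. Consequently, for any fixed pair $(a,b)$ the inclusions $\Gamma_a\subset \bar K^X_\tau$ and $\tilde\Gamma_b\cap \bar K^X_\tau=\emptyset$ hold for all $\tau$ sufficiently negative, and the inner-barrier Corollary \ref{lemma_inner_barrier} (together with the analogous outer-barrier statement from the Remark following it) propagates this sandwich forward in $\tau$ up to $\mathcal{T}$.

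The quantitative input comes from the ADS/KM shrinker asymptotics of \cite[Sec.~8]{ADS} and \cite{KM}: on the cylindrical portion of $\Sigma_a$ (respectively $\tilde\Sigma_b$) the profile differs from $\sqrt{2}$ by a quantity that decays rapidly (in fact super-polynomially) as $a\to\infty$, over a domain of length $\sim a$. Translating back to $\mathbb{R}^4$ via Definition \ref{new_fol_def}, the barriers $\Gamma_{a(\tau)}$ and $\tilde\Gamma_{b(\tau)}$ approximate $\Gamma$ to within $o(e^{2\tau/9})$ on $\{r\leq 2e^{-\tau/9}\}$ provided $a(\tau),b(\tau)$ are of order $e^{-\tau/9}$. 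Combined with the barrier sandwich and the convexity of $\bar K^X_\tau$ (automatic from noncollapsedness by \cite[Theorem 1.10]{HaslhoferKleiner_meanconvex}), this yields a graphical representation of $\bar M^X_\tau$ over $\Gamma\cap B_{2e^{-\tau/9}}$ with $C^0$-norm $\leq e^{2\tau/9}$.

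Finally, I would upgrade the $C^0$-bound to the $C^4$-bound required by \eqref{small_graph} by the same parabolic regularity argument already invoked to promote \eqref{refined L^2 estimate} to \eqref{refined C^10 estimate}; the conditions \eqref{univ_fns} hold trivially for $\bar\rho(\tau)=e^{-\tau/9}$. The main technical obstacle I anticipate is the bookkeeping in the barrier step: one must choose the parameter curves $a(\tau),b(\tau)$ carefully, verify that the shrinker approximation error beats $e^{2\tau/9}$ with room to spare on $\{r\leq 2e^{-\tau/9}\}$, and check that the threshold $\mathcal{T}$ can be taken to depend only on an upper bound for $Z(X)$ rather than on $X$ itself. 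The generous exponential gap between $9/20$ and $2/9$ should make this feasible.
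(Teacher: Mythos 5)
Your overall strategy — anchor the ADS-barrier at the fixed scale $r\approx 10$ using the decay estimate of Lemma \ref{refined estimate}, propagate it forward with the maximum principle, and then upgrade the resulting pointwise control to the $C^4$-bound — is essentially the paper's, but your choice of barrier parameter contains a genuine error that breaks the argument. You take $a(\tau)\sim e^{-\tau/9}$ and assert that $\Gamma_{a(\tau)}$ approximates $\Gamma$ to within $o(e^{2\tau/9})$ on $\{r\leq 2e^{-\tau/9}\}$. This is false: the ADS-profile satisfies $u_a(a)=0$, and the lower bound $u_a(x)\geq\sqrt{2}(1-x^2/a^2)$ is useful only when $x\ll a$; with $x\sim a$ the profile swings all the way from $\sqrt{2}$ down to $0$. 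So with $a\sim e^{-\tau/9}$, the inner barrier $\Gamma_a$ is $o(e^{2\tau/9})$-close to $\Gamma$ only on $\{r\lesssim 1\}$ — which is information you already had — and it pinches off entirely before $r=2e^{-\tau/9}$. The paper instead takes $a$ as \emph{large} as the boundary estimate at $r\approx 10$ permits, namely $a^2\sim e^{-9\tau/20}$ (so $a\sim e^{-9\tau/40}\gg e^{-\tau/9}$), and only then reads off the profile bound on the much smaller ball $\{r\leq 2e^{-\tau/9}\}$: there $x^2/a^2\lesssim e^{-2\tau/9+9\tau/20}=e^{41\tau/180}$, which just barely beats $e^{2\tau/9}=e^{40\tau/180}$. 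This exponential-vs.-exponential margin is exactly why the final graphical radius is $e^{-\tau/9}$ rather than anything larger, and it is invisible in your proposal. Your outer-barrier parameter is also reversed: the KM-shrinker $\tilde\Gamma_b$ approaches the cylinder as $b\to 0$, not as $b\to\infty$, so $b(\tau)\sim e^{-\tau/9}$ makes the outer barrier \emph{worse} as $\tau\to-\infty$.

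Two further remarks on the route. First, the paper does not use the KM outer barriers at all in this proof: the one-sided inner barrier $\Gamma_a$ plus the convexity of $\bar K^X_\tau$ (from \cite[Thm.~1.10]{HaslhoferKleiner_meanconvex}) already pin down the surface from both sides, which is both cleaner and avoids the delicate KM asymptotics you would otherwise need to quantify. Second, the passage from the $C^0$-graph produced by the barrier to the $C^4$-smallness demanded by \eqref{small_graph} is not simply ``the same parabolic regularity argument'' as \eqref{refined L^2 estimate}$\Rightarrow$\eqref{refined C^10 estimate}: that step requires an a priori graphical structure, which is precisely what is being established here. The paper inserts an intermediate step of quantitative differentiation (as in \cite[Prop.~4.1]{CHH}) to control the regularity scale, then interpolates the $C^0$-bound against it to obtain a small $C^1$-bound, and only then invokes Schauder. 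Without this your outline reduces the higher-order control to the conclusion you are trying to prove.
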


\begin{proof}
We recall from \cite[Thm. 8.2]{ADS} that the profile function $u_a$ of the ADS-barriers satisfies
\begin{equation}
u_a(x)  \geq \sqrt{2}\left(1-\frac{x^2}{a^2}\right)  
\end{equation}
on the interval $0\leq x\leq a$, and
\begin{equation}
 u_a(x) \leq \sqrt{2}\left(1-\frac{x^2-10}{1000a^2}\right)
\end{equation}
on the interval $8\leq x\leq 100$, provided that $a$ is large enough. 

The upper bound for $u_a$ and Lemma \ref{refined estimate} (decay estimate) imply that $\{r=10\}\cap \Gamma_a$ is enclosed by $\bar M^X_\tau$ for $a^2 \leq \frac{1}{C}e^{-\frac{9}{20}\tau}$. Since $\Gamma_a$ is enclosed by $\bar M^X_\tau$ for $\tau\ll0$, the maximum principle thus guarantees that
\begin{equation}
\textrm{$\bar M^X_\tau$ encloses $\{r\geq 10\}\cap \Gamma_a$ for $a^2 \leq Ce^{-\frac{9}{20}\tau}$.}
\end{equation}
Hence, by the estimate \eqref{refined C^10 estimate}, the convexity of $\bar{M}_\tau^X$, and the lower bound for $u_a$, we get that  $\bar{M}_\tau^X$ is graphical over the cylinder up to a radius $\frac{1}{C}e^{-\frac{9}{60}\tau}$, with $C^0$-norm less than $Ce^{\frac{9}{60}\tau}$.

We will now upgrade this $C^0$-estimate to a bound for the regularity scale by arguing similarly as in the proof of \cite[Proposition 4.10]{CHH}. Specifically, for any $X'=(x',t')\in\mathcal{M}$ denote by $R(X')$ the maximal radius $r$ such that $|A|\leq 1/r$ in the parabolic pall $P(X',r)$. Suppose towards a contradiction that $R(X')\ll \sqrt{-t'}$ for some $X'$ in the unrenormalized flow in the region under consideration. Consider Huisken's monotone quantity $\Theta_{X'}(r_j)$ from \cite{Huisken_monotonicity} centered at $X'$ at scales $r_j = 2^j R(X')$. Given any $\delta>0$ by quantitative differentiation \cite{CHN_stratification} there exists some $j\in \{1,\ldots, \lceil 2/\delta \rceil \}$ such that $\Theta_{X'}(r_{j+1})-\Theta_{X'}(r_{j-1})<\delta$, and consequently the flow must be $\eps$-close to a self-shrinker at scale $r_j$ centered at $X'$, where $\eps(\delta)\to 0$ for $\delta\to 0$. Fixing $\delta>0$ small enough we can arrange that $\eps\ll 1$ and $r_j \ll \sqrt{-t'}$. However, since in our noncollapsed setting the only self-shrinkers are round spheres, necks and bubble-sheets, this contradicts our graphical $C^0$-estimate.

Finally, interpolating the regularity scale estimate and the $C^0$-estimate, we see that up to a radius of $2e^{-\frac{1}{9}\tau}$, the hypersurface $\bar{M}_\tau^X$ is a $C^1$-graph over the cylinder with norm less than $\eps_0 e^{\frac{9}{60}\tau}$. Hence, using standard parabolic Schauder estimates for the renormalized flow around any point within that radius $e^{-\frac{1}{9}\tau}$ of the point $X$ yields the desired result.
\end{proof}

\bigskip

From now on, we work with $\rho=\bar{\rho}$ from Proposition \ref{rough barrier} (improved graphical radius), and in particular define $\check u, \check U_0, \check U_\pm, \ldots$ with respect to this improved graphical radius.  Note that the unstable mode is still dominant.

\bigskip

\begin{corollary}[sharp decay estimate]\label{sharp C^10 estimate}
There exist constants $C<\infty$ and $\mathcal{T}>-\infty$, depending only an upper bound for $Z(X)$, such that 
\begin{equation}
\|\check u\|_{\mathcal{H}} \leq Ce^{\tau/2},
\end{equation}
and
\begin{equation}\label{sharp_c10est}
\|\check u(\cdot,\tau)\|_{C^{10}(\{r\leq 100\})}\leq C e^{\tau/2}.
\end{equation}
holds for $\tau \leq \mathcal{T}$.
\end{corollary}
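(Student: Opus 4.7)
The plan is to combine the improved graphical radius $\rho(\tau)=e^{-\tau/9}$ from Proposition \ref{rough barrier} with the dominance inequality \eqref{plus_dom_untilt} and the differential inequality \eqref{der_plus_untilt}, which continue to hold with this new choice of $\rho$ because the unstable mode remains dominant.  Substituting $\rho^{-1}=e^{\tau/9}$, the inequality \eqref{der_plus_untilt} reads
\begin{equation}
\tfrac{d}{d\tau}\log \check U_+(\tau)\geq 1-Ce^{\tau/9},
\end{equation}
valid for $\tau\leq \mathcal{T}$, where $\mathcal{T}$ and $C$ depend only on an upper bound for $Z(X)$.

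First I would integrate this linear differential inequality backwards from $\mathcal{T}$ to an arbitrary $\tau\leq \mathcal{T}$.  The integral $\int_{\tau}^{\mathcal{T}}e^{s/9}\,ds=9(e^{\mathcal{T}/9}-e^{\tau/9})$ is uniformly bounded (in fact $\leq 9 e^{\mathcal{T}/9}$) as $\tau\to-\infty$, so one obtains
\begin{equation}
\log\check U_+(\tau)\leq \log \check U_+(\mathcal{T})-(\mathcal{T}-\tau)+9C e^{\mathcal{T}/9},
\end{equation}
which, after exponentiating and absorbing the $\tau$-independent constants, yields $\check U_+(\tau)\leq C e^{\tau}$ for all $\tau\leq \mathcal{T}$ (the initial value $\check U_+(\mathcal{T})$ is controlled by Lemma \ref{refined estimate} and ultimately by the $\varepsilon$-closeness at scale $Z(X)$).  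Since the unstable mode is dominant, $\check U(\tau)=\check U_+(\tau)+\check U_0(\tau)+\check U_-(\tau)=(1+o(1))\check U_+(\tau)$, so $\|\check u(\cdot,\tau)\|_{\mathcal{H}}^2\leq Ce^{\tau}$, which is the first claimed inequality.

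Second, to obtain the $C^{10}$ estimate on $\{r\leq 100\}$, I would feed the $L^2$ bound into standard parabolic Schauder/De Giorgi-Moser regularity for the evolution equation $\partial_\tau \check u=\mathcal{L}\check u+\check E$ of Proposition \ref{refined error estimate}, exactly as in the reference to Theorem \ref{Stampacchia estimate} at the end of the proof of Lemma \ref{refined estimate}.  Since the cutoff is trivial on $\{r\leq 100\}$ once $-\tau$ is sufficiently large (as $\rho(\tau)=e^{-\tau/9}\to\infty$), on this set $\check u$ agrees with the full graph function $\bar u$, which solves the genuine (not cut-off) rescaled MCF equation whose coefficients are smooth on a fixed compact region of $\Gamma$.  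Local parabolic regularity on a unit-scale parabolic neighborhood then converts the $L^2$-in-space, $L^\infty$-in-time bound $Ce^{\tau/2}$ into a $C^{10}$-in-space bound of the same order (the exponential factor $e^{\tau/2}$ passes through the linear estimate unchanged).

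The only mild obstacle is bookkeeping: one must verify that the constants arising from the integrated ODE depend only on an upper bound for $Z(X)$ and not on $X$ itself, which is precisely the content of Lemma \ref{refined estimate} guaranteeing $\check U_+(\mathcal{T})\leq C$ with $C=C(Z(X))$.  No new geometric input is needed beyond what has already been established.
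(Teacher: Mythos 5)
Your proof is correct and takes essentially the same approach as the paper: use the improved graphical radius $\rho=e^{-\tau/9}$ from Proposition \ref{rough barrier} to sharpen the differential inequality \eqref{der_plus_untilt}, integrate to obtain $\check U_+\leq Ce^{\tau}$, invoke unstable-mode dominance to pass to $\|\check u\|_{\mathcal H}\leq Ce^{\tau/2}$, and then bootstrap to $C^{10}$ via Theorem \ref{Stampacchia estimate}. The only cosmetic difference lies in the ODE step: you integrate $\tfrac{d}{d\tau}\log\check U_+\geq 1-Ce^{\tau/9}$ directly (which requires only the integrability of $e^{\tau/9}$, not an a priori rate on $\check U_+$), whereas the paper rewrites it as $\tfrac{d}{d\tau}(e^{-\tau}\check U_+)\geq -Ce^{\tau/90}$ and uses the bound $\check U_+\leq Ce^{9\tau/10}$ from Lemma \ref{refined estimate} to make the right-hand side integrable; both routes yield the same conclusion and both still need Lemma \ref{refined estimate} to control the initial value $\check U_+(\mathcal T)$ in terms of $Z(X)$.
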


\begin{proof}
Combining the evolution inequality \eqref{der_plus_untilt}, Lemma \ref{refined estimate} (decay estimate) and Proposition \ref{rough barrier} (improved graphical radius) yields 
\begin{equation}
\frac{d}{d\tau}\left(e^{-\tau}\check U_+\right) \geq -Ce^{-\tau+\frac{1}{9}\tau+\frac{9}{10}\tau}=-Ce^{\frac{1}{90}\tau}
\end{equation}
for all $\tau\leq \mathcal{T}$.  Thus, $\check U_{+} \leq Ce^{\tau}$. Since the unstable mode is dominant, this proves that $\|\check u\|_{\mathcal{H}} \leq Ce^{\frac{\tau}{2}}$.\\
Moreover, Proposition \ref{rough barrier} (improved graphical radius), inequality \eqref{plus_dom_untilt} and Proposition \ref{refined error estimate} (evolution of truncated graph function) then  give
\begin{equation}\label{error_c10_prop}
\check U_0+\check U_- +\|(\partial_\tau-\mathcal{L})\check u\|_{\mathcal{H}}^2 \leq Ce^{\frac{10}{9}\tau}.
\end{equation}
Using this, \eqref{sharp_c10est} follows from parabolic estimates (see Theorem \ref{Stampacchia estimate} for details). This proves the corollary.
\end{proof}

\bigskip

\subsection{The fine bubble-sheet theorem} The goal of this section is to prove Theorem \ref{thm Neck asymptotic}. Recalling the basis of $\mathcal{H}_+$ from \eqref{basis_hplus} we can write
\begin{align}\label{def P_+check u}
P_+\check u^X=a_0^X(\tau)+a_1^X(\tau)x_1 +a_2^X(\tau)x_2+ a_3^X(\tau)\cos \theta + a_4^X(\tau)\sin \theta,
\end{align}
where the superscript  is to remind us that all these quantities can (a priori) depend on the center point $X$.

\begin{proposition}[estimate for coefficients]\label{est_coeffs}
The coefficients defined in \eqref{def P_+check u} satisfy the estimates
\begin{equation}\label{coeff_est1}
|a_0^X(\tau) |\leq Ce^{\tfrac{11}{18}\tau},
\end{equation}
and 
\begin{equation}\label{coeff_est2}
\sum_{i=1}^4\left|e^{-\tfrac{\tau}{2}} a_i^X(\tau)-\bar{a}_i^X\right| \leq Ce^{\tfrac{1}{9}\tau},
\end{equation}
for $\tau\leq \mathcal{T}$, where $\bar{a}_i^X$ are numbers that might depend on $X$.
\end{proposition}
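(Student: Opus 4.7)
The plan is to project the evolution equation from Proposition \ref{refined error estimate} onto the one-dimensional eigenspaces of $\mathcal{L}$ spanned by the basis elements $1, x_1, x_2, \cos\theta, \sin\theta$ of $\mathcal{H}_+$. Direct computation shows that these are eigenfunctions with eigenvalues $1$ and $\tfrac{1}{2},\tfrac{1}{2},\tfrac{1}{2},\tfrac{1}{2}$ respectively. Since $\mathcal{L}$ preserves each one-dimensional eigenspace and projections commute with $\partial_\tau$, the coefficients in \eqref{def P_+check u} satisfy decoupled linear ODEs
\begin{equation*}
a_0'(\tau) - a_0(\tau) = F_0(\tau), \qquad a_i'(\tau) - \tfrac{1}{2}a_i(\tau) = F_i(\tau) \quad (i=1,\ldots,4),
\end{equation*}
where each $F_i(\tau)$ is, up to normalization by $\|\phi_i\|_{\mathcal{H}}$, the $\mathcal{H}$-inner product of $(\partial_\tau - \mathcal{L})\check u$ with the corresponding basis function $\phi_i$.

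Next I would bound the forcing. Combining the error estimate $\|(\partial_\tau - \mathcal{L})\check u\|_{\mathcal{H}}\leq C\rho^{-1}\|\check u\|_{\mathcal{H}}$ from Proposition \ref{refined error estimate} with the sharp decay $\|\check u\|_{\mathcal{H}}\leq Ce^{\tau/2}$ from Corollary \ref{sharp C^10 estimate} and the graphical radius $\rho(\tau) = e^{-\tau/9}$ from Proposition \ref{rough barrier}, Cauchy-Schwarz yields
\begin{equation*}
|F_i(\tau)| \leq C\rho(\tau)^{-1}\|\check u(\cdot,\tau)\|_{\mathcal{H}} \leq C e^{\tau/9}\cdot e^{\tau/2} = C e^{11\tau/18}
\end{equation*}
for $\tau\leq \mathcal{T}$. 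Now I would solve each ODE by integrating factors. For $a_0$, write $(e^{-\tau}a_0)' = e^{-\tau}F_0$, so $|(e^{-\tau}a_0)'| \leq Ce^{-7\tau/18}$, which is integrable on $[\tau,\mathcal{T}]$. Integrating backward from $\mathcal{T}$, where $|a_0(\mathcal{T})|\leq \|\check u(\cdot,\mathcal{T})\|_{\mathcal{H}}/\|1\|_{\mathcal{H}}$ is controlled in terms of $Z(X)$, gives
\begin{equation*}
|a_0(\tau)| \leq C e^{\tau} + C e^{11\tau/18} \leq C e^{11\tau/18},
\end{equation*}
since $11/18<1$ and $\tau<0$; this is \eqref{coeff_est1}.

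For $i=1,\ldots,4$ the integrating factor $e^{-\tau/2}$ gives $|(e^{-\tau/2}a_i)'| \leq Ce^{-\tau/2+11\tau/18} = Ce^{\tau/9}$, and crucially this is integrable on $(-\infty,\tau]$. Hence $\tau \mapsto e^{-\tau/2}a_i(\tau)$ is Cauchy as $\tau\to -\infty$ and admits a finite limit, which we define as $\bar a_i^X$. Integrating the derivative from $-\infty$ up to $\tau$ then produces the quantitative rate
\begin{equation*}
\bigl| e^{-\tau/2}a_i(\tau) - \bar{a}_i^X \bigr| \leq \int_{-\infty}^\tau C e^{s/9}\, ds \leq C e^{\tau/9},
\end{equation*}
which is exactly \eqref{coeff_est2}. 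The proof is essentially Duhamel's principle applied to the Ornstein-Uhlenbeck dynamics; the only mildly delicate point is the split in how one integrates the two ODEs: the unstable direction (eigenvalue $1$) forces us to use data at the fixed time $\mathcal{T}$ as the initial condition, whereas the boundary-of-spectrum directions (eigenvalue $\tfrac{1}{2}$) match the decay rate of $\|\check u\|_{\mathcal{H}}$ precisely, so their ``initial'' data naturally lives at $\tau=-\infty$ and encodes the genuine expansion parameters $\bar a_i^X$. All constants inherit their dependence on an upper bound of $Z(X)$ through $\mathcal{T}$ and the sharp decay estimate.
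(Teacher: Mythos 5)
Your proof is correct and follows essentially the same approach as the paper: project the evolution equation from Proposition \ref{refined error estimate} onto the unstable eigenfunctions (eigenvalues $1$ and $\tfrac12$), bound the forcing by $C\rho^{-1}\|\check u\|_{\mathcal{H}} \leq Ce^{11\tau/18}$, and integrate the resulting ODEs with integrating factors $e^{-\tau}$ and $e^{-\tau/2}$ respectively, the former from $\tau$ to $\mathcal{T}$ and the latter from $-\infty$ to $\tau$. The only small additions you make beyond the paper's write-up — noting that $e^{-\tau/2}a_i(\tau)$ is a priori bounded and Cauchy so that the limit $\bar a_i^X$ exists, and that $|a_0(\mathcal{T})|$ is controlled in terms of $Z(X)$ — are precisely the minor details the paper leaves implicit, so this is a faithful reconstruction.
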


\begin{proof}
Letting $\check E:=(\partial_\tau -\mathcal{L})\check u$, and using $\mathcal L \,1 = 1$, we compute
\begin{align}
\frac{d}{d\tau} a_0^X(\tau)
=(\tfrac{e}{2\pi})^{\frac{1}{4}}  \int (\mathcal{L}\check u+\check E) \tfrac{1}{4\pi} e^{-\frac{|x|^2}{4}}
=a_0^X(\tau)+(\tfrac{e}{2\pi})^{\frac{1}{4}}  \int   \tfrac{\check E}{4\pi}e^{-\frac{|x|^2}{4}}.
\end{align}
Hence, using Lemma \ref{refined error estimate}, Proposition \ref{rough barrier} and Corollary \ref{sharp C^10 estimate}, we obtain
\begin{align}
\Big|\frac{d}{d\tau}\left(e^{-\tau} a_0^X(\tau)\right) \Big| \leq Ce^{-\tau}\|\check E\|_{\mathcal{H}} \leq Ce^{-\tau+\frac{\tau}{2}+\frac{\tau}{9}}
\leq Ce^{-\frac{7}{18}\tau}.
\end{align}
Integrating this from $\tau$ to $\mathcal{T}$ implies \eqref{coeff_est1}.\\
In a similar manner, using $\mathcal{L} x_i=\frac{1}{2}x_i$ for $i=1,\ldots, 4$ we get
\begin{align}
\Big|\frac{d}{d\tau}\left(e^{-\frac{\tau}{2}} a_i^X(\tau)\right) \Big| \leq Ce^{-\frac{\tau}{2}}\|\check E\|_{\mathcal{H}} \leq Ce^{\frac{\tau}{9}},
\end{align}
so integrating from $-\infty$ to $\tau$ yields \eqref{coeff_est2} with
\begin{equation}
\bar{a}_i^X=\lim_{\tau\to -\infty} e^{-\tau/2}a_i^X(\tau).
\end{equation}
This proves the proposition.
\end{proof}

\bigskip

\begin{theorem}[The fine bubble-sheet theorem] \label{thm Neck asymptotic}Let $\{M_t\}$ be an ancient noncollapsed flow in $\mathbb{R}^4$, with a bubble sheet tangent flow at $-\infty$, whose unstable mode is dominant. Then
there exists some constants $ a_1,\cdots, a_4$, independent of the center point  $X=(x_0^1,x_0^2,x_0^3,x_0^4,t_0)$, such that 
\begin{align}\label{constants_univ_choice}
\bar a_1^X=a_1, &&\bar a_2^X= a_2, &&\bar a_3^X=a_3-x_0^3, &&\bar a_4^X=a_4-x_0^4.
\end{align}
Moreover, for every center point $X$ the truncated graph function $\check u^X(\cdot,\tau)$ of the renormalized flow $\bar{M}_\tau^X$ satisfies the estimates
\begin{equation}\label{main_thm_est1}
\left\| \check{u}^X-e^{\tfrac{\tau}{2}}\left(a_1 x_1 +a_2x_2 +\bar{a}_3^X\cos \theta + \bar{a}_4^X\sin \theta\right)\right\|_{\mathcal{H}}\leq C e^{\frac{5}{9}\tau},
\end{equation}
and
\begin{align}\label{main_thm_est2}
\left\| \check u^X-e^{\tfrac{\tau}{2}}\left(a_1 x_1 +a_2x_2 +\bar{a}_3^X\cos \theta + \bar{a}_4^X\sin \theta\right)\right\|_{L^{\infty}(\{r\leq 100\})} \leq C e^{\frac{19}{36}\tau}.
\end{align}
for $\tau\leq \mathcal{T}$, where $C<\infty$ and $\mathcal{T}>-\infty$ only depend on an upper bound on the bubble sheet scale $Z(X)$.
\end{theorem}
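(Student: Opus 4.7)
The plan is to assemble the theorem from three ingredients already in place: the sharp decay bound $\|\check u^X\|_{\mathcal H}\leq Ce^{\tau/2}$ from Corollary~\ref{sharp C^10 estimate}, the dominance of the unstable mode $\check U_0+\check U_-\leq C\rho^{-1}\check U_+$ combined with the improved graphical radius $\rho(\tau)=e^{-\tau/9}$ of Proposition~\ref{rough barrier}, and the ODE analysis of the unstable coefficients in Proposition~\ref{est_coeffs}. I would decompose $\check u^X = P_+\check u^X + P_0\check u^X + P_-\check u^X$. The first two ingredients together yield $\|P_0\check u^X+P_-\check u^X\|_{\mathcal H}^2 \leq C\rho^{-1}\|\check u^X\|_{\mathcal H}^2 \leq Ce^{10\tau/9}$, hence $\|P_0\check u^X+P_-\check u^X\|_{\mathcal H}\leq Ce^{5\tau/9}$. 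For the unstable part, set $\bar a_i^X:=\lim_{\tau\to-\infty} e^{-\tau/2}a_i^X(\tau)$; these limits exist by Proposition~\ref{est_coeffs}, which furthermore gives $|a_0^X(\tau)|\leq Ce^{11\tau/18}$ and $|a_i^X(\tau)-\bar a_i^X e^{\tau/2}|\leq Ce^{11\tau/18}$ for $i=1,\ldots,4$, so $\|P_+\check u^X - e^{\tau/2}\sum_{i\geq 1}\bar a_i^X\phi_i\|_{\mathcal H}\leq Ce^{11\tau/18}\leq Ce^{5\tau/9}$. Adding the two contributions yields~\eqref{main_thm_est1}.

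For~\eqref{main_thm_est2}, I observe that each of $x_1, x_2, \cos\theta, \sin\theta$ is an $\mathcal L$-eigenfunction with eigenvalue $\tfrac 12$, so the leading combination $e^{\tau/2}(\bar a_1^X x_1+\bar a_2^X x_2+\bar a_3^X\cos\theta+\bar a_4^X\sin\theta)$ satisfies $\partial_\tau f = \mathcal L f$ exactly. The remainder $w$ therefore solves $(\partial_\tau-\mathcal L)w=\check E$ with $\|w\|_{\mathcal H}\leq Ce^{5\tau/9}$ (from the previous step) and $\|\check E\|_{\mathcal H}\leq C\rho^{-1}\|\check u^X\|_{\mathcal H}\leq Ce^{11\tau/18}$ by Proposition~\ref{refined error estimate}. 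Since $\mathcal L$ is uniformly elliptic on $\Gamma\cap\{r\leq 100\}$, the standard $L^2$-to-$L^\infty$ parabolic regularity packaged in Theorem~\ref{Stampacchia estimate} upgrades these bounds to~\eqref{main_thm_est2}; the exponent $19/36$ simply absorbs the polynomial loss inherent in that regularity step.

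The substantive content of the theorem is the center-invariance~\eqref{constants_univ_choice}. Given $X$ and $X':=X+(\delta_1,\delta_2,\delta_3,\delta_4,\Delta t)$, one checks directly that $\bar M^{X'}_\tau = e^{(\tau-\tau')/2}\bar M^X_{\tau'}-e^{\tau/2}(\delta_1,\delta_2,\delta_3,\delta_4)$ where $\tau'-\tau = O(e^\tau|\Delta t|)$. Writing the shifted hypersurface as a graph over $\Gamma$ and expanding to leading order in $e^{\tau/2}$ produces three elementary identities: the time shift gives $\bar u^{X'} = \bar u^X + O(e^\tau)$; translations in $x_1, x_2$ give $\bar u^{X'}(x_1, x_2, \theta, \tau) = \bar u^X(x_1 + e^{\tau/2}\delta_1, x_2 + e^{\tau/2}\delta_2, \theta, \tau)$, hence an $O(e^\tau)$ correction; and translations in $x_3, x_4$ give, by squaring and re-extracting the $S^1(\sqrt 2)$-radius, the relation $\bar u^{X'} = \bar u^X -e^{\tau/2}(\delta_3\cos\theta+\delta_4\sin\theta)+O(e^\tau)$. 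Dividing by $e^{\tau/2}$ and sending $\tau\to-\infty$ then yields $\bar a_1^{X'}=\bar a_1^X$, $\bar a_2^{X'}=\bar a_2^X$, $\bar a_3^{X'}=\bar a_3^X-\delta_3$, and $\bar a_4^{X'}=\bar a_4^X-\delta_4$, which is exactly~\eqref{constants_univ_choice}.

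The main point requiring care is that the $O(e^\tau)$ corrections in the three identities above be uniform in $\tau$, with constants depending only on $|\Delta X|$ and an upper bound for the bubble-sheet scale $Z(X)$; only then does the $\tau\to-\infty$ limit define genuine flow-invariants $a_1,\ldots,a_4$ rather than center-dependent residues. This uniformity should follow, with some bookkeeping, from the quantitative graphical bounds already established in Proposition~\ref{rough barrier} and Corollary~\ref{sharp C^10 estimate}, making this primarily a careful expansion rather than a new conceptual input.
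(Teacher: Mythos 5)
Your proof follows the paper's overall structure closely, and the two substantive parts—the $\mathcal H$-estimate~\eqref{main_thm_est1} and the center-invariance~\eqref{constants_univ_choice}—match the paper's argument almost verbatim. The only genuine divergence is in the $L^\infty$-estimate~\eqref{main_thm_est2}, where the paper uses Agmon's inequality to interpolate between the $L^2$-bound $\|D^X\|_{\mathcal H}\leq Ce^{5\tau/9}$ and the $H^3$-bound $\|D^X\|_{H^3(\{r\leq 100\})}\leq Ce^{\tau/2}$ coming from Corollary~\ref{sharp C^10 estimate}; this interpolation is precisely where the exponent $19/36 = \tfrac12\cdot\tfrac{5}{9}+\tfrac12\cdot\tfrac{1}{2}$ comes from. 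Your appeal to Theorem~\ref{Stampacchia estimate} is a legitimate alternative route, but two points need care. First, your characterization of $19/36$ as ``the polynomial loss inherent in that regularity step'' is incorrect: Theorem~\ref{Stampacchia estimate} suffers no such loss, and if you carry out the Stampacchia argument correctly you in fact get the \emph{stronger} bound $\|D^X\|_{L^\infty(\{r\leq100\})}\leq Ce^{5\tau/9}$ (up to the smaller term $k\sim\rho^{-1}e^{\tau/2}=e^{11\tau/18}$), which of course still implies~\eqref{main_thm_est2}. Second, Theorem~\ref{Stampacchia estimate} requires a \emph{pointwise} bound $|\check E|\leq C_0(|D^X|+|\nabla D^X|)+k$ on $Q(2R)$, not an $\mathcal H$-bound; you should invoke the pointwise inequality $|E|\leq C\rho^{-1}(|\bar u|+|\nabla\bar u|)$ from the proof of Lemma~\ref{Error u-PDE} (with $A\equiv 0$), together with the observation that on $\{r\leq 200\}$ the cutoff terms in $\check E$ vanish for $\tau\ll 0$ and that $\bar u = D^X + e^{\tau/2}\sum\bar a_i^X\phi_i$ with $\phi_i$ and $\nabla\phi_i$ uniformly bounded there. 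With those two corrections your Stampacchia route is sound, and the rest of the proposal is essentially the paper's proof.
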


\begin{proof}
Consider the difference
\begin{equation}
D^X:=\check{u}^X-e^{\tau/2}\left(\bar{a}^X_1 x_1 +\bar{a}_2^X x_2 +\bar{a}_3^X\cos \theta + \bar{a}_4^X\sin \theta\right).
\end{equation}
Using  Proposition \ref{est_coeffs} (estimate for coefficients) we see that
\begin{equation}\label{point_ineq_D}
|D^X|\leq |\check{u}^X - P_{+}\check{u}^X| +  C (|x|+1) e^{\tfrac{11}{18}\tau}.
\end{equation}
Since by \eqref{error_c10_prop} we have $\check U_0+\check U_- \leq Ce^{\frac{10}{9}\tau}$, it follows that
\begin{equation}
\| D^X \|_{\mathcal{H}}\leq Ce^{\frac{5}{9}\tau},
\end{equation}
which proves \eqref{main_thm_est1} modulo the claim about the coefficients.\\

Next, we observe that 
\begin{align}
\|D^X \|_{L^2( \{ r\leq 100\})} \leq C\| D^X \|_{\mathcal{H}} \leq Ce^{\frac{5}{9}\tau},
\end{align}
and, using Corollary \ref{sharp C^10 estimate} (sharp decay estimate), that
\begin{align}
\|\nabla^3 D^X \|_{L^2( \{ r\leq 100\})} 
\leq C\| \hat{u}^X \|_{C^3( \{ r\leq 100\})}+Ce^{\frac{\tau}{2}}  \leq Ce^{\frac{\tau}{2}}.
\end{align}
Applying Agmon's inequality this yields
\begin{equation}\label{agmon_est}
\|D^X\|_{L^{\infty}( \{ r\leq 100\})}\leq C\|D^X\|_{L^2( \{ r\leq 100\})}^{\frac{1}{2}}\|D^X\|_{H^3( \{ r\leq 100\})}^{\frac{1}{2}}\leq C e^{\tfrac{19}{36}\tau}. 
\end{equation}

\bigskip

Finally, let us show that the parameters  $a_1,\cdots, a_4$, defined via \eqref{constants_univ_choice}, are independent of $X$.\\

First, let us show that they are independent of time translation. Denote by $\check{u}^{X'}$ the function obtained by considering the renormalized mean curvature flow with center $X'=(x_0^1,x_0^2,x_0^3,x_0^4,0)$. A direct calculation shows that for $\theta\in (0,2\pi]$, $x_1^2+x_2^2\leq 100$ we have
\begin{multline}
\check{u}^X\left(\frac{x_1}{\sqrt{1+t_0e^{\tau}}},\frac{x_2}{\sqrt{1+t_0e^{\tau}}},\theta,\tau-\log(1+t_0e^{\tau})\right)\\
=\frac{1}{\sqrt{1+t_0e^{\tau}}}(\sqrt{2}+\check{u}^{X'}(x_1,x_2,\theta,\tau))-\sqrt{2}.
\end{multline}
This implies 
\begin{equation}
\|\check{u}^X-\check{u}^{X'}\|_{L^{\infty}( \{ r\leq 100\})}=o(e^{\tau/2}),
\end{equation}
and thus together with \eqref{agmon_est} yields $\bar{a}_i^{X}=\bar{a}_i^{X'}$ for every $i=1,\ldots 4$.  \\

Now, comparing the renormalized flows with center $X'=(x_0^1,x_0^2,x_0^3,x_0^4,0)$ and center $X''=((0,0,0,0),0)$, we need to relate both the parameters of the functions $\check{u}^{X'}$ and $\check{u}^{X''}$ which describe the same point in the original flow, and the distance of such a point from the respective axii. This leads to 
\begin{multline}
\sqrt{2}+\check{u}^{X'}(x_1-x_0^1e^{\tau/2},x_2e-x_0^2e^{\tau/2},\theta+O(e^{\tau/2}),\tau)\\
=\mathrm{dist}\left(\left(\sqrt{2}+\check{u}^{X''}(x_1,x_2,\theta,\tau)\right)(\cos\theta,\sin\theta),e^{\tau/2}\left(x_0^3,x_0^4\right)\right).
\end{multline}
By Taylor expansion and Corollary \ref{sharp C^10 estimate} we have
\begin{multline}
\mathrm{dist}\left(\left(\sqrt{2}+\check{u}^{X''}(x_1,x_2,\theta,\tau)\right)(\cos\theta,\sin\theta),e^{\tau/2}\left(x_0^3,x_0^4\right)\right)\\
=\sqrt{2}+\check{u}^{X''}(x_1,x_2,\theta,\tau)-x_0^3\cos\theta e^{\tau/2}-x_0^4\sin\theta e^{\tau/2}+o(e^{\tau/2}).
\end{multline}
Together with \eqref{agmon_est}, the above formulas imply that
\begin{equation}
\bar{a}_1^{X'}=\bar{a}_1^{X''},\;\;\;\; \bar{a}_2^{X'}=\bar{a}_2^{X''},\;\;\;\; \bar{a}_3^{X'}=\bar{a}_3^{X''}-x_0^3,\; \;\;\;\;\bar{a}_4^{X'}=\bar{a}_4^{X''}-x_0^4.
\end{equation}
This finishes the proof of the theorem.
\end{proof}

\bigskip

\subsection{The nonvanishing expansion theorem} Our next goal it to show that $a_1$ and $a_2$ can not simultaneously vanish.\\

We decompose $\mathcal{H}_+$ into $\mathcal{H}_{1/2}=\text{span}\{x_1,x_2,\cos\theta,\sin\theta\}$ and $\mathcal{H}_1=\text{span}\{1\}$. Also, we define $P_{1/2}$ and $P_1$ as the projections to $\mathcal{H}_{1/2}$ and $\mathcal{H}_1$, respectively. In addition, we denote $\check U_{1/2}=\|P_{1/2}\check u\|^2_{\mathcal{H}}$ and $\check U_{1}=\|P_{1}\check u\|^2_{\mathcal{H}}$.

\begin{lemma}[decay if coefficients vanished]\label{lemma_dec_vanished}
If the center $X$ was such that $\bar{a}_1^X=\ldots = \bar{a}_4^X=0$, then there would be a constant $K_0>0$ such that
\begin{equation}\label{H-norm.limit.constant.mode}
\check U_1=K_0e^{2\tau}(1+O(e^{\frac{1}{9}\tau})),
\end{equation}
and moreover we would have
\begin{equation}\label{constant.dominant}
\check U_{1/2}+\check U_0+\check U_-\leq Ce^{\frac{1}{9}\tau}\check U_{1}.
\end{equation}
\end{lemma}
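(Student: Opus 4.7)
The plan is to run a bootstrap argument anchored on the fine bubble-sheet theorem. The starting input is Theorem \ref{thm Neck asymptotic}: since $\bar{a}_1^X = \cdots = \bar{a}_4^X = 0$ by hypothesis, the theorem immediately gives $\|\check u^X(\cdot,\tau)\|_{\mathcal{H}} \leq Ce^{5\tau/9}$, and Proposition \ref{est_coeffs} specializes to $|a_i^X(\tau)| \leq Ce^{11\tau/18}$ for $i=1,\ldots,4$, so $\check U_{1/2}(\tau) \leq Ce^{11\tau/9}$. This seeds a decay bound of the form $\|\check u\|_{\mathcal{H}} \leq Ce^{\beta\tau}$ with $\beta = 11/18$, on which the bootstrap will operate.

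The iteration works as follows. From Proposition \ref{refined error estimate} and $\rho^{-1} = e^{\tau/9}$ one has $\|\check E\|_{\mathcal{H}} \leq C\rho^{-1}\|\check u\|_{\mathcal{H}} \leq Ce^{(\beta+1/9)\tau}$. Each $a_i$ for $i=1,\ldots,4$ satisfies $\frac{d}{d\tau}(e^{-\tau/2}a_i) = e^{-\tau/2}E_i$ with $|E_i| \leq C\|\check E\|_{\mathcal{H}}$, and because $\bar{a}_i^X = 0$, integrating from $-\infty$ to $\tau$ upgrades the bound to $|a_i| \leq Ce^{(\beta+1/9)\tau}$; paired with the bound $|a_0| \leq Ce^{11\tau/18}$ from Proposition \ref{est_coeffs} and the unstable-mode dominance $\check U_0+\check U_-\leq C\rho^{-1}\check U_+$, this improves $\beta$ to $\beta+1/9$. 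After finitely many rounds one passes $\beta > 8/9$, at which point the analogous integration for the constant mode, $\frac{d}{d\tau}(e^{-\tau}a_0) = e^{-\tau}E_0$, has an integrable right-hand side at $-\infty$. This defines the limit $k_0 := \lim_{\tau\to-\infty}e^{-\tau}a_0(\tau)$ with a quantitative rate; after one more bootstrap round tightening the exponents, one obtains $|e^{-\tau}a_0(\tau) - k_0| \leq Ce^{\tau/9}$, whence $\check U_1(\tau) = K_0 e^{2\tau}(1+O(e^{\tau/9}))$ for $K_0 := k_0^2\|1\|_{\mathcal{H}}^2$, which is \eqref{H-norm.limit.constant.mode}. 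The same terminal iteration yields $\check U_{1/2} \leq Ce^{20\tau/9}$, and then $\check U_0+\check U_- \leq C\rho^{-1}\check U_+ \leq Ce^{\tau/9}\cdot Ce^{2\tau} = Ce^{19\tau/9}$, so $\check U_{1/2}+\check U_0+\check U_- \leq Ce^{19\tau/9} \leq Ce^{\tau/9}\check U_1$ once $K_0 > 0$, giving \eqref{constant.dominant}.

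The main obstacle is ensuring $K_0 > 0$. The plan here is a contradiction argument: if $k_0 = 0$, then $a_0$ enters the bootstrap on the same footing as the $a_i$, and each iteration improves \emph{all} the coefficient exponents by $1/9$ without a terminal obstruction. Consequently $\|\check u\|_{\mathcal{H}}$ decays faster than $e^{M\tau}$ for every $M$, which forces $\check u \equiv 0$. Then $\bar M^X_\tau$ coincides with $\Gamma$ for all $\tau$, i.e., the flow is the round shrinking bubble-sheet cylinder. This is excluded at the outset (by the running assumption in Section \ref{sec_unstable} that the unstable mode is dominant, and by the hypothesis in Theorem \ref{thm_finding_necks} that the flow is not the round cylinder), giving the desired contradiction and hence $K_0 > 0$. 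The delicate technical point will be keeping track of the bootstrap exponents carefully enough to land precisely on the claimed $O(e^{\tau/9})$ error, and to verify that the integrations from $-\infty$ are justified at each stage.
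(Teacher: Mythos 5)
Your route is genuinely different from the paper's. You bootstrap the $\mathcal{H}$-norm decay of $\check u$ coefficient by coefficient, integrating $\tfrac{d}{d\tau}(e^{-\lambda\tau}a_i)$ against a successively improving error bound; the paper instead applies the Merle-Zaag ODE lemma a second time (to $U_0=e^{-\tau}\check U_{1/2}$, $U_+=e^{-\tau}\check U_1$, $U_-=0$) to obtain $\check U_{1/2}\leq Ce^{\tau/9}\check U_1$, and then integrates $\big|\tfrac{d}{d\tau}\log(e^{-2\tau}\check U_1)\big|\leq Ce^{\tau/9}$ from $-\infty$. That choice buys the paper positivity of $K_0$ for free: $K_0=e^{\tilde K_0}$ is the exponential of a finite limit, hence automatically $K_0>0$, with no contradiction argument and no case distinction. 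Your route must instead establish $K_0>0$ by contradiction, and that is where the gaps lie. (Also, a minor imprecision in the iteration: you write that the fixed bound $|a_0|\leq Ce^{11\tau/18}$ from Proposition \ref{est_coeffs} is ``paired with'' the improved bounds on $a_1,\dots,a_4$, but a fixed bound on $a_0$ caps $\beta$ at $11/18$. One must re-derive the $a_0$ estimate at each round, integrating $\tfrac{d}{d\tau}(e^{-\tau}a_0)$ from $\tau$ to $\mathcal{T}$ while $\beta+\tfrac19<1$, absorbing the boundary term $O(e^\tau)$; the text reads as though $a_0$ is left untouched until $\beta>8/9$.)

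The substantive gap is in the $k_0=0$ branch. First, ``$\|\check u\|_{\mathcal{H}}$ decays faster than $e^{M\tau}$ for every $M$, which forces $\check u\equiv 0$'' is not a valid inference as stated: superexponential decay in $\tau$ does not by itself imply vanishing at any fixed $\tau$. The claim can be repaired by keeping the bootstrap constants explicit — one finds $\|\check u(\cdot,\tau)\|_{\mathcal{H}}\leq C_0\,(C'e^{\tau/9})^n e^{11\tau/18}$ with a fixed $C'$, so $\check u(\cdot,\tau)=0$ for $\tau<-9\log C'$ — but you give no such control. Second, and more seriously, $\check u(\cdot,\tau)\equiv 0$ only says that $\bar M_\tau^X$ coincides with $\Gamma$ on the truncated region $\{r\leq\rho(\tau)/2\}$; concluding that the flow is the round shrinking bubble-sheet cylinder requires a unique-continuation (or spatial analyticity) argument for mean curvature flow, which you neither carry out nor cite. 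Both points can be filled, but together they constitute a genuine gap in the proposal; the paper's log-integration sidesteps the entire issue.
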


\begin{proof}
Assuming $\bar{a}_1^X=\ldots = \bar{a}_4^X=0$, Proposition \ref{est_coeffs} implies
\begin{equation}
a_0^X(\tau)^2+\ldots + a_4^X(\tau)^2\leq Ce^{\frac{11}{9}\tau},
\end{equation}
hence
\begin{equation}\label{est_1_12}
\check U_{1}+ \check U_{1/2}\leq e^{\frac{11}{9}\tau}.
\end{equation}
Moreover, since the unstable mode is dominant, we have
\begin{equation}\label{eq_since_unst}
\check U_0+\check U_-\leq Ce^{\frac{1}{9}\tau}(\check U_{1}+ \check U_{1/2}).
\end{equation}
Now, using Proposition \ref{refined error estimate} we get the evolution inequalities
\begin{align}
\big|\tfrac{d}{d\tau}\check U_{1/2}-\check U_{1/2}\big|&\leq Ce^{\frac{1}{9}\tau}(\check U_{1/2}+\check U_1),\label{U.system.1/2}\\
\big|\tfrac{d}{d\tau}\check U_{1}-2\check U_{1}\big|&\leq Ce^{\frac{1}{9}\tau}(\check U_{1/2}+\check U_1)\label{U.system.1}.
\end{align}
Applying the Merle-Zaag ODE-lemma (Lemma \ref{mz.ode}) with $U_0=e^{-\tau}\check U_{1/2}$, $U_{-}=0$, and $U_{+}=e^{-\tau}\check U_1$, we get either
$\check U_1=o(\check U_{1/2})$ or
$\check U_{1/2}\leq Ce^{\frac{1}{9}\tau}\check U_1$.
In the former case, arguing as in Lemma \ref{neutral mode lower bound} we could infer that $e^{-\tau}\hat U_{1/2}\geq e^{\frac{1}{9} \tau}$ for every $-\tau$ sufficiently large, contradicting \eqref{est_1_12}. Hence,
\begin{equation}\label{const_dom}
\check U_{1/2}\leq Ce^{\frac{1}{9}\tau}\check U_1.
\end{equation}
Together with \eqref{eq_since_unst} this proves the estimate \eqref{constant.dominant}.\\
Moreover, using \eqref{const_dom}, our differential inequality takes the form
\begin{equation}
\big|\tfrac{d}{d\tau}\check U_{1}-2\check U_{1}\big|\leq Ce^{\frac{1}{9}\tau} \check U_{1},
\end{equation}
which can be rewritten as
\begin{equation}
\big|\tfrac{d}{d\tau}(\log (e^{-2\tau}\check U_{1}))\big|\leq Ce^{\frac{1}{9}\tau}.
\end{equation}
Integrating this from $-\infty$ to $\tau$ gives that there exists some constant $\tilde{K}_0$ such 
\begin{equation}
|\log (e^{-2\tau}\check{U}_1)-\tilde{K}_0| \leq Ce^{\frac{1}{9}\tau} 
\end{equation}
for all $\tau \leq \mathcal{T}$. Exponentiating both sides and using the approximation $e^x\approx 1+x$ for small $x\in \mathbb{R}$ give \eqref{H-norm.limit.constant.mode}. In particular, $K_0=e^{\tilde{K}_0}>0$. 
\end{proof}

\bigskip

\begin{theorem}[the nonvanishing expansion theorem]\label{non.vanish.coeffs}
The coefficients from the fine-bubble sheet theorem satisfy $ |a_1|+ |a_2|>0$.
\end{theorem}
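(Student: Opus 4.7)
The plan is to argue by contradiction using a time-translation of the center. Assume $a_1=a_2=0$. By Theorem \ref{thm Neck asymptotic}, for the center $X_\ast = (0,0,a_3,a_4,0)$ we have $\bar{a}_i^{X_\ast}=0$ for $i=1,2,3,4$, so Lemma \ref{lemma_dec_vanished} applies and yields $\check{U}_1^{X_\ast}(\tau) = K_0 e^{2\tau}(1+O(e^{\tau/9}))$ with $K_0>0$. Equivalently, the coefficient of the constant mode satisfies $a_0^{X_\ast}(\tau) = c_\ast e^\tau(1+o(1))$ with $c_\ast \neq 0$.

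Next, for each $s \in \mathbb{R}$ I consider the time-translated center $X_s = (0,0,a_3,a_4,s)$. The time-translation argument at the end of the proof of Theorem \ref{thm Neck asymptotic} shows $\|\check{u}^{X_s}-\check{u}^{X_\ast}\|_{L^\infty(\{r\le 100\})} = o(e^{\tau/2})$, which via Agmon's inequality preserves the $\mathcal{H}_{1/2}$-coefficients. Hence $\bar{a}_i^{X_s} = \bar{a}_i^{X_\ast}=0$ for $i=1,\ldots,4$ and every $s$, so Lemma \ref{lemma_dec_vanished} applies to each $X_s$ as well, yielding $a_0^{X_s}(\tau) = c(s)\,e^\tau(1+o(1))$ with $c(s)\neq 0$.

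The crucial step is to track how the leading constant mode transforms under time translation, refining the change-of-center identity one order beyond what was needed in Theorem \ref{thm Neck asymptotic}. Starting from the exact relation
\begin{equation*}
\check{u}^{X_s}\!\Bigl(\tfrac{x_1}{\sqrt{1+se^{\tau}}},\tfrac{x_2}{\sqrt{1+se^{\tau}}},\theta,\tau-\log(1+se^{\tau})\Bigr) = \tfrac{\sqrt{2}+\check{u}^{X_\ast}(x_1,x_2,\theta,\tau)}{\sqrt{1+se^{\tau}}}-\sqrt{2},
\end{equation*}
substituting the leading behaviors $\check{u}^{X_\ast}\approx c_\ast e^\tau$ and $\check{u}^{X_s}\approx c(s)e^{\tilde\tau}$, and Taylor-expanding in the small parameter $se^\tau$ yield the affine relation $c(s) = c_\ast - s/\sqrt{2}$. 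The main technical hurdle will be controlling the error terms in this expansion: corrections from the spatial rescaling, the shift in $\tau$, and the subleading modes of $\check{u}^{X_\ast}$ must all be shown to be $o(e^\tau)$, which follows from the sharp decay estimate in Corollary \ref{sharp C^10 estimate} combined with Proposition \ref{est_coeffs}. Choosing $s = \sqrt{2}\,c_\ast$ then produces $c(s)=0$, contradicting $c(s)\neq 0$. Hence $a_1=a_2=0$ is impossible, proving $|a_1|+|a_2|>0$.
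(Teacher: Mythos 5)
Your proof is correct and takes essentially the same route as the paper: assume $a_1=a_2=0$, pick a center with all $\bar a_i^X=0$, invoke Lemma \ref{lemma_dec_vanished} to extract a nonvanishing constant mode $c_\ast e^\tau$, shift the center in time to cancel it, and then derive a contradiction by re-applying Lemma \ref{lemma_dec_vanished} at the shifted center. The only cosmetic difference is that the paper verifies the cancellation pointwise, via the $C^4$-estimate and the identity $x_3^2+x_4^2=2(1+Ke^\tau)+o(e^\tau)$ combined with \cite[Lemma 5.11]{ADS}, whereas you extract the affine law $c(s)=c_\ast-s/\sqrt{2}$ directly from the time-translation identity; both amount to the same computation, with the same choice of shift up to normalization.
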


\begin{proof}
Suppose towards a contradiction that $a_1=a_2=0$. Then, we can choose a point $X$ such that $\bar{a}_1^X=\ldots = \bar{a}_4^X=0$. By Lemma \ref{lemma_dec_vanished} and parabolic estimates (see Theorem \ref{Stampacchia estimate}) we get
\begin{align} 
\|u(\cdot,\tau)\|_{C^4(\{r\leq 100\})} \leq Ce^{ \tau},
\end{align}
and
\begin{align}
x_3^2+x_4^2=2(1+Ke^{\tau})+o(e^{\tau})
\end{align}
on $\{r\leq 100\}$, where $K=(2e/\pi)^{1/4}K_0^{1/2}$. Here, we have determined the constant $K$ using the identity
\begin{equation}
\|2^{-1/2}Ke^{\tau}\|_{\mathcal{H}}^2=K_0e^{2\tau}=\check U_1+O(e^{\frac{19}{9}\tau}),
\end{equation}
which holds since
\begin{equation}
\int_{\Sigma}e^{-\frac{|x|^2}{4}}= 2^{\frac{3}{2}}\pi e^{-\frac{1}{2}} \left(\int_{-\infty}^{\infty}e^{-\frac{z^2}{4}}dz\right)^2=(2\pi/e)^{\frac{1}{2}}(4\pi)^{\frac{3}{2}}.
\end{equation}
Hence, the rescaled flow with the center $X'=X+(0,K)$ satisfies
\begin{align}\label{vanish.C0.norm}
x_3^2+x_4^2 = 2+o(e^{\tau}),
\end{align}
uniformly on $\{r\leq 100\}$, see also \cite[Lemma 5.11]{ADS} for a more detailed explanation. Since we re-centered by shifting only in time direction, the new point $X'$ still satisfies $\bar{a}_1^{X'}=\ldots = \bar{a}_4^{X'}=0$, so 
Lemma \ref{lemma_dec_vanished} gives some $K_0'>0$ such that 
\begin{equation}
e^{-2\tau}\check U_1=K_0'+O(e^{\tfrac{1}{9}\tau}).
\end{equation}
 Since $K_0'\neq 0$, this contradicts \eqref{vanish.C0.norm}. This proves the theorem.
\end{proof}

\bigskip

\section{Conclusion in the unstable mode case}\label{sec_unstable_end}

The goal of this section is to prove the following theorem. 

\begin{theorem}[unstable mode]\label{ruling_out_unstable}
The only noncompact ancient noncollapsed flow in $\mathbb{R}^4$, with bubble-sheet tangent flow at $-\infty$, whose unstable mode is dominant, is $\mathbb{R}\times2d$-bowl.
\end{theorem}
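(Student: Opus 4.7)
The plan is to show that the blowdown $\check K$ of $M_t$ must contain a line, from which the theorem follows by dimension reduction (Proposition 2.2) together with the Brendle-Choi and Angenent-Daskalopoulos-Sesum classifications. Since $\check K$ is a convex cone of dimension at most $2$ (Proposition 2.1) and the flow is noncompact, it suffices to exclude the cases where $\check K$ is a halfline or a $2d$-wedge of opening angle in $(0,\pi)$.

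I first record the auxiliary reduction: if $\check K$ contains a line then $M_t = N_t \times \mathbb{R}$, where $N_t$ is a noncompact ancient noncollapsed flow in $\mathbb{R}^3$. The bubble-sheet assumption on $M_t$ forces $N_t$ to have a neck tangent flow at $-\infty$, and by the known classification $N_t$ is then either the round shrinking $S^1\times\mathbb{R}$ (in which case $M_t$ is a self-similar bubble-sheet, with no unstable graph), the $2d$-bowl, or the compact $2d$ ancient oval. For the oval, ADS shows that the neck is governed by its neutral mode; this property is inherited by $N_t\times\mathbb{R}$ as a flow whose bubble-sheet expansion is dominated by the $(x_2^2-2)$ mode, contradicting our hypothesis. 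Thus the only remaining line-in-blowdown possibility is $M_t = \mathbb{R}\times 2d$-bowl, which is what we want.

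Suppose now, for contradiction, that $\check K$ is either a wedge with sides $\ell_1,\ell_2$, or a degenerate wedge (halfline) $\ell=\ell_1=\ell_2$. For each $j$, pick a sequence $X_i^{(j)}\in M$ whose spatial projection escapes to infinity along $\ell_j$. Using Theorem 2.7 (quantitative differentiation) together with the sharp decay estimate of Corollary 6.6, I first check that the bubble-sheet scales $Z(X_i^{(j)})$ remain bounded along the sequence, so that after parabolic rescaling by $Z(X_i^{(j)})$ one extracts a smooth subsequential Hausdorff limit $N_t^{(j)}$. The limit is ancient noncollapsed with bubble-sheet tangent at $-\infty$, and its blowdown contains the full line spanned by $\ell_j$ (since in the rescaled picture the side extends infinitely in both directions), so $N_t^{(j)}$ splits off an $\mathbb{R}$-factor. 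The universality in Theorem 6.3 propagates both the dominance of the unstable mode and the specific value of $(a_1,a_2)$ from $M_t$ to $N_t^{(j)}$. Applying the auxiliary reduction to $N_t^{(j)}$ therefore identifies it as $\mathbb{R}_{\ell_j}\times 2d$-bowl, with the $2d$-bowl translating in some direction $w^{(j)}$ in the axis plane perpendicular to $\ell_j$. A direct computation on this explicit model (matching the remark in the introduction that $a_2$ equals the reciprocal of the translation speed in suitable coordinates) gives $(a_1,a_2)=c_j w^{(j)}$ with $c_j>0$.

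The contradiction follows from universality and the nonvanishing expansion theorem (Theorem 6.11). For a genuine wedge, $\ell_1\not\parallel\ell_2$ implies $w^{(1)}\not\parallel w^{(2)}$, so $(a_1,a_2)$ cannot be a positive multiple of both, which is incompatible with $(a_1,a_2)\neq 0$. In the halfline case, approaching $\ell$ through two distinct sequences lying on opposite sides of the axis plane instead produces two model translators with $w^{(1)}=-w^{(2)}$, so $c_1 w^{(1)}=c_2 w^{(2)}$ with $c_j>0$ again forces $(a_1,a_2)=0$, contradicting Theorem 6.11. The main obstacle is the extraction of the limit flow in the third paragraph: guaranteeing that $Z(X_i^{(j)})$ stays bounded and bounded away from zero, that the rescaled graph functions and the universal coefficients $(a_1,a_2)$ converge to those of the limit, and --- in the halfline case --- that two genuinely distinct translator limits can be extracted from the two sides, which requires using strict convexity and noncollapsing in tandem. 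Once these technical points are established, the contradiction is clean and the theorem follows.
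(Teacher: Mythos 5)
Your proposal follows essentially the same route as the paper: suppose the blowdown $\check K$ does not contain a line, translate to boundary points escaping to infinity along the sides (or the halfline), use boundedness of the bubble-sheet scale to extract limit flows that split off a line and are hence $\mathbb{R}\times 2d$-bowl, and derive a contradiction between the translation directions of these bowls via the universality of $(a_1,a_2)$ and the nonvanishing expansion theorem. The minor deviations (rescaling by $Z$ rather than translating without rescaling, and splitting the wedge and halfline contradictions into two separate arguments rather than the paper's uniform one via the sign of $a_2 = \langle (a_1,a_2), e_2\rangle$) are cosmetic and do not change the substance of the argument.
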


\begin{proof}By the reduction from Section \ref{sec_coarse} it is enough to prove that if the unstable mode is dominant, then its blowdown contains a line.\\

So let $M_t=\partial K_t$ be a noncompact ancient noncollapsed flow in $\mathbb{R}^4$, with bubble-sheet tangent flow at $-\infty$, whose unstable mode is dominant, and suppose towards a contradiction that its blowdown $\check K$ does not contain a line.
Then the flow is strictly convex, and  $\check{K}$ is a halfline or a wedge of angle less than $\pi$ in $\mathbb{R}^2\times \{0\}$.
Choosing suitable coordinates  we can assume that $\check{K}$ is symmetric across the $x_1$-axis, and is contained in the half space $\{x_1\geq 0\}$. By translating, we may also assume that $0\in M_0$ is the point in $M_0$ with smallest $x_1$-value. This implies that for every $h>0$ there exist a unique  point $x_h^{\pm}\in M_0\cap \{x_1=h\}$ at which $x_2$ is maximized/minimized. \\

By the fine bubble-sheet theorem (Theorem \ref{thm Neck asymptotic}) and the non-vanishing expansion theorem  (Theorem \ref{non.vanish.coeffs}) there exists expansion parameters $a_1,a_2$ associated to our flow such that $|a_1|+|a_2|>0$. 

\begin{claim}[bubble-sheet scale]\label{cyl_scale_bd}
There exists some constant $C<\infty$ such that
\begin{equation}
\sup_h Z(x^{\pm}_h)\leq C.
\end{equation}
\end{claim}

\begin{proof}[Proof of the claim]
We will argue similarly as in the proofs of \cite[Proposition 5.8]{CHH} and \cite[Proposition 6.2]{CHHW}.\\
Suppose towards a contradiction that $Z(x^{\pm}_{h_i})\to \infty$ for some sequence $\{h_i\}$ with $\lim_{i\rightarrow \infty}h_i=\infty$. Let $\mathcal{M}^i$ be the sequence of flows obtained by shifting $x^{\pm}_{h_i}$ to the origin, and parabolically rescaling by $Z(x^{\pm}_{h_i})^{-1}$. By \cite[Thm. 1.14]{HaslhoferKleiner_meanconvex} we can pass to a subsequential limit $\mathcal{M}^\infty$, which is an ancient noncollapsed flow that is weakly convex and smooth until it becomes extinct. Note also that $\mathcal{M}^\infty$ has bubble-sheet tangent flow at $-\infty$.

We next observe that, $\mathcal{M}^\infty$ cannot be a round shrinking $\mathbb{R}^2\times S^1$. Indeed, if such a cylinder became extinct at time $0$ that would contradict the definition of the bubble-sheet scale, and if it became extinct at some later time that would contradict the fact that $M^\infty_0\cap (\mathbb{R}^2\times \{0\})$ is a strict subset of $\mathbb{R}^2\times \{0\}$ by construction.\\
Thus, by Theorem \ref{mz.ode.fine.neck} (Merle-Zaag alternative) for the flow $\mathcal{M}^\infty$ either the neutral mode is dominant or the unstable mode is dominant.
If the neutral mode is dominant, then for large $i$, this contradicts the fact that $\mathcal{M}^i$ has dominant unstable mode. Indeed, on the one hand by Lemma \ref{neutral mode lower bound} (rough decay estimate) and Theorem \ref{thm_blowdown_neutral} (blowdown in neutral mode) the hypersurfaces $\bar{M}^{\infty,0}_\tau$ have some definite inwards quadratic bending, but on the other hand by the fine-bubble sheet theorem (Theorem \ref{thm Neck asymptotic})  the hypersurfaces $\bar{M}^{i,0}_\tau$ converge exponentially fast to the bubble sheet $\Gamma$. Since $\bar{M}^{i,0}_\tau$ converges locally smoothly to $\bar{M}^{\infty,0}_\tau$ this gives the desired contradiction for $i$ large enough.
If the unstable mode is dominant, then by the fine-bubble sheet theorem (Theorem \ref{thm Neck asymptotic}) and the non-vanishing expansion theorem (Theorem \ref{non.vanish.coeffs}) the limit $\mathcal{M}^\infty$ has some expension parameters $a_1^\infty,a_2^\infty$ that do not vanish simultaneously. However, this contradicts the fact that the expansion parameters of $\mathcal{M}^i$ are obtained from the expansion parameters $(a_1,a_2)$ of $\mathcal{M}$ by scaling by  $Z(x^{+}_{h_i})^{-1}\to 0$. This concludes the proof of the claim.
\end{proof}

Continuing the proof of the theorem, let $h_i\to \infty$ and consider the sequence $\mathcal{M}^i:=\mathcal{M}-(x_{h_i}^{+},0)$, which is obtained by translating in space time without rescaling.
Taking a subsequential limit, Claim \ref{cyl_scale_bd} implies that this limit $\mathcal{M}^\infty$ is an ancient noncollapsed flow with a bubble-sheet tangent at $-\infty$. Moreover, arguing as in the proof of Claim \ref{cyl_scale_bd} we see that $\mathcal{M}^\infty$ has a dominant unstable mode, with the same expansion parameters $a_1,a_2$ as $\mathcal{M}$.\\

On the other hand, by the choice of $x_{h_i}^{+}$, we have
\begin{equation}
\tfrac{x^+_{h_i}}{\|x^+_{h_i}\|}\rightarrow w^+ \in \partial \check{K},
\end{equation}
with $\langle w^{+},e_2 \rangle\geq 0$. Thus, $\mathcal{M}^\infty$ splits off a line in the direction $w^+$. Therefore, by \cite{BC} the limit $\mathcal{M}^\infty$ is $\mathbb{R}$ times a $2$-dimensional bowl, where the $\mathbb{R}$-factor is in the direction $w^+$, and where the translation direction $v^+$ is the orthogonal complement of $w^{+}$ in $\mathbb{R}^2\times \{0\}$ with 
\begin{equation}\label{speed_right}
\langle v^{+},e_2\rangle<0.
\end{equation}  

As the expansion parameters of $\mathcal{M}^\infty$ are also $a_1,a_2$ we see by observation (or by the fine-neck theorem from \cite{CHH}) that 
\begin{equation}
\begin{pmatrix} a_1 \\ a_2\end{pmatrix}=\gamma v^{+}
\end{equation}  
for some $\gamma>0$. Combining this with \eqref{speed_right}, we get that $a_2<0$.

Arguing similarly using $x_{h_i}^{-}$ gives that $a_2>0$; a contradiction. This concludes the proof of the theorem.
\end{proof}
\bigskip
\bigskip

\appendix

\section{Local $L^\infty$-estimate}

In this appendix, we consider the renormalized mean curvature flow given (in some ball) as a graph over the cylinder $\Gamma=\mathbb{R}^{n-d} \times \, S^d(\sqrt{2d})$, namely our variables are $(y,\sqrt{2d}\,\omega)\in \Gamma$ where $y\in \mathbb{R}^{n-d}$ and $\omega \in S^d$. Let
\begin{align}
\mathcal{L}u & =\Delta_\Gamma u-\tfrac12 x^{\text{tau}}\cdot \nabla_\Gamma u + u=\rho^{-1} \text{div}(\rho \nabla u) +\tfrac{1}{2d}\Delta_{S^d}u+u,
\end{align}
where $\rho(y)$ is the Gaussian density given by
\begin{equation}
\rho(y)=(4\pi)^{-\frac{d+1}{2}}e^{-\frac{d}{2}}e^{-\frac{|y|^2}{4}}.
\end{equation}
Given $R\gg 1$, we let $\Gamma_R=\{(y,\omega)\in\Gamma:|y|\leq R\}$ and  $Q(R)=\Gamma_R \times [-R^2,0]$. We consider smooth solutions $u:Q(2R) \to \mathbb{R}$ to the equation
\begin{equation}\label{appendix u-equation}
u_\tau =\mathcal{L}u +E.
\end{equation}

\begin{theorem}[{c.f. \cite[Theorem 6.17]{Lieberman}}]\label{Stampacchia estimate}
Suppose that for some constants $C_0,k<\infty$, the error $E$ satisfies
\begin{equation}
|E| \leq C_0(|u|+|\nabla u|)+k.
\end{equation}
Then,
\begin{equation}
\sup_{Q(R)}|u| \leq C\Bigg[k+ \bigg(\int_{Q(2R)} u^2\,d\emph{vol}_\Gamma d\tau \bigg)^{\frac{1}{2}}\Bigg],
\end{equation}
where $C=C(C_0,R,n)<\infty$.
\end{theorem}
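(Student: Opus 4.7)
The plan is to reduce to a standard local parabolic $L^\infty$-estimate and then invoke (or redo in this setting) the De Giorgi/Moser iteration as in Lieberman. First, on the fixed compact region $Q(2R)$ the Gaussian weight $\rho$ and its reciprocal are uniformly bounded, and the drift coefficients $\tfrac12 x^{\mathrm{tan}}$ are bounded by a constant depending only on $R$ and $n$. Hence $\mathcal L$ can be written as a uniformly parabolic operator in non-divergence form with bounded coefficients on $Q(2R)$, and the equation becomes
\begin{equation}
u_\tau = a^{ij}\partial_i\partial_j u + b^i\partial_i u + cu + E,
\end{equation}
where $|a^{ij}|+|b^i|+|c|\le C(R,n)$ and ellipticity is uniform. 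The pointwise bound on $E$ then gives a structural inequality of the standard Moser/Stampacchia type: $u$ solves a linear parabolic equation whose coefficients of $u$ and $\nabla u$ are bounded and whose forcing has $L^\infty$-norm at most $k$.

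Next I would carry out the energy step. Fix a pair of cutoff functions $\eta(y,\omega)\ge 0$ supported in $\Gamma_{(3R/2)}$ with $\eta\equiv 1$ on $\Gamma_R$, and $\xi(\tau)$ supported in $(-(3R/2)^2,0]$ with $\xi\equiv 1$ on $[-R^2,0]$. Set $v=(|u|-\ell)_+$ for a level $\ell>0$, multiply the equation by $\eta^2\xi^2 v^{p-1}\mathrm{sgn}(u)$ (or $v$ in the $p=2$ De Giorgi step) and integrate by parts in space-time against the smooth measure $\rho\,d\mathrm{vol}_\Gamma\,d\tau$. The good term is $\int \rho\,\eta^2\xi^2 |\nabla v^{p/2}|^2$; the drift, zero-order, and $C_0(|u|+|\nabla u|)$ contributions from $E$ are absorbed via Cauchy--Schwarz into a fraction of the good term plus $R$-dependent multiples of $\int \rho\,\eta\xi v^p$, while the forcing $k$ produces a term bounded by $k|A_\ell|^{1/2}$, where $A_\ell=\{(y,\omega,\tau)\in Q(2R):|u|>\ell\}$.

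After this I would apply the parabolic Sobolev embedding on the cylinder $\Gamma\cap\{|y|\le 2R\}$ times time (the $S^d$-factor is compact and contributes no difficulty, so the usual Sobolev inequality with exponent $\kappa>1$ is available with constants depending only on $R,n$). Combining with the energy inequality above yields the standard reverse-Hölder/iteration setup: for every $\ell'>\ell$,
\begin{equation}
(\ell'-\ell)^2|A_{\ell'}|^{1+\sigma}\le C\Big(k^2+\int_{A_\ell} u^2\Big)|A_\ell|,
\end{equation}
for some $\sigma>0$ and $C=C(C_0,R,n)$. The classical De Giorgi/Stampacchia lemma (or the analogous Moser iteration in $p$) then provides a level $\ell_\infty$, bounded by the right-hand side of the theorem, at which $|A_{\ell_\infty}|=0$.

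The main technical nuisance is keeping the inhomogeneous constant $k$ in the right place through the iteration; the cleanest trick is to replace $u$ by $\tilde u=u-k\varphi$ for a fixed smooth $\varphi$ on $Q(2R)$ solving $\varphi_\tau-\mathcal L\varphi\equiv 1$ (or simply $\varphi\equiv C(R)$ using $\mathcal L 1=1$), which reduces to the case $k=0$, or equivalently to absorb the $k$-term directly into the Stampacchia iteration above. Otherwise every step is routine and the result follows as stated in \cite[Theorem~6.17]{Lieberman}, with the constant $C$ depending on $C_0$, $R$, and $n$ only.
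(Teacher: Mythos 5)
Your outline is correct and follows essentially the same route as the paper: restrict to the fixed compact region $Q(2R)$ where $\mathcal L$ is uniformly parabolic with bounded coefficients, run a De Giorgi/Moser-type iteration against cutoffs together with a parabolic Sobolev inequality, and carry the inhomogeneous constant $k$ through the iteration. The paper's execution differs in two minor but worth-noting ways: it first substitutes $\bar u=\rho u$ to put the equation in divergence form $\bar u_\tau = \mathrm{div}(\rho\nabla(\rho^{-1}\bar u))+\rho E$ before iterating with test functions of the form $\bar u^{q-2}v^{\alpha q-n-2}\xi^2$ and $v=(1-kR/\bar u)_+$, and it explicitly flags that the statement of Theorem 6.17 in Lieberman is incorrect as printed, which is exactly why the paper redoes the iteration rather than citing it as a black box; your plan to carry out the iteration anyway sidesteps that issue. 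One small caution on your side remark: the reduction $\tilde u=u-k\varphi$ does not cleanly reduce to the case $k=0$, because $E$ depends on $u$ itself through the term $C_0(|u|+|\nabla u|)$, so the substitution merely reshuffles the $k$-dependence rather than eliminating it; the honest route is the one you also name, namely keeping the $k$-term inside the Stampacchia/Moser iteration, which is what the paper does via the $(1-kR/\bar u)_+$ factor.
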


\begin{proof}
Instead of $u$, we consider the function $\bar u=\rho u$ which solves
\begin{equation}
\bar u_\tau ={\rm{div}} \big(\rho \nabla_\Gamma (\rho^{-1}\bar u)\big)+\rho E.
\end{equation}
Since this equation satisfies the conditions of \cite[Theorem 6.17]{Lieberman}, given $q>2$ we can obtain the following inequality as the proof in Lieberman\footnote{The book states a stronger inequality, which is wrong, but easily correctable. Here, we provide the necessary modification of the argument for convenience of the reader.}
\begin{equation}
\int_{\Omega}\bar u^{q-2}|\nabla \bar u|^2v^{\alpha q-n-2}\xi^2d{\rm{vol}_\Gamma}d\tau \leq \frac{Cq^2}{R^2}\int_{\Omega}\bar u^qv^{\alpha q-n-2}d{\rm{vol}_\Gamma}d\tau,
\end{equation}
where $\Omega=Q(2R)\cap \{\bar u \geq kR\}$, $\xi=(1-|x|^2/4R^2)_+(1+t/4R^2)_+$ is a cut-off function, $v=(1-kR/\bar u)_+\in [0,1)$, and $\alpha=(n+2)/2$. Thus, $h=\bar u v^\alpha$ satisfies
\begin{equation}
\int_{\Omega}|\nabla h|^2\leq Cq^4R^{-2}\int_{\Omega}h^2v^{-2}
\end{equation}
Hence, the Sobolev type inequality from Lemma \ref{Sobolev} below yields 
\begin{multline}
C q^4 \int_{\Omega}h^2v^{-2} \geq \bigg(\int_{\Omega}|\nabla h|^2+\int_{\Omega}|h|^2\bigg)+\int_{\Omega}|h|^2\\
 \geq \frac{1}{C}\bigg(\int_{\Omega} |\nabla h|^2+|h|^2\bigg)^{\frac{n}{n+2}}\bigg(\int_{\Omega}h^2\bigg)^{\frac{2}{n+2}} \geq \frac{1}{C}\bigg(\int_{\Omega}h^{\frac{2(n+2)}{n}}\bigg)^{\frac{2}{n+2}}
\end{multline}
 for some constant $C=C(n,R)<\infty$. Thus, setting $\kappa=(n+2)/n$, $w=\bar u v^\alpha$, and $d\mu=R\xi (1-kR/\bar u)_+^{-n-2}d{\rm{vol}_\Gamma} d\tau$ gives
 \begin{equation}
 \bigg(\int_\Omega w^{\kappa q}d\mu\bigg)^{1/\kappa q}\leq C^{1/q}q^{4/q} \bigg(\int_\Omega w^{  q}d\mu\bigg)^{1/ q}.
 \end{equation}
 Hence, iterating this process with $q=2\kappa^j$ for $j\in \mathbb{N}$ yields the result. 
\end{proof}

The following lemma has been used in lieu of \cite[Theorem 6.9]{Lieberman}:

\begin{lemma}\label{Sobolev}
Suppose that $u\in C^{\infty}(Q(R))$ is a non-negative function satisfying $u=0$ on $\partial \Gamma_R \times [-R^2,0]$. Then, there exists some constant $C=C(n,R)<\infty$ such that
\begin{align}
\int_{Q(R)} u^{\frac{2(n+2)}{n}} \leq C \bigg(\int_{Q(R)}u^2 \bigg)^{\frac{2}{n}}\bigg(\int_{Q(R)} |\nabla u|^2+u^2 \bigg).
\end{align}
\end{lemma}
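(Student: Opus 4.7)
The plan is to prove the inequality via the classical parabolic-Sobolev recipe: a pointwise-in-time spatial Sobolev-Gagliardo-Nirenberg inequality on each slice $\Gamma_R$, followed by an integration in the time variable $\tau$. At each fixed $\tau\in[-R^2,0]$, the function $u(\cdot,\tau)$ is a smooth non-negative function on the $n$-dimensional compact manifold-with-boundary $\Gamma_R$, vanishing on the lateral boundary $\partial B^{n-d}_R\times S^d(\sqrt{2d})$. The standard Sobolev embedding (taking $n\ge 3$ for concreteness; the small-$n$ cases are analogous using any fixed large exponent in place of $2n/(n-2)$) therefore gives
\[
\|u(\cdot,\tau)\|_{L^{2n/(n-2)}(\Gamma_R)}^{2}\;\le\; C(n,R)\int_{\Gamma_R}\bigl(|\nabla u|^{2}+u^{2}\bigr)(\cdot,\tau).
\]

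The second ingredient is Hölder interpolation between $L^2$ and $L^{2n/(n-2)}$: choosing the interpolation parameter $\theta=n/(n+2)$ so that $\tfrac{n}{2(n+2)}=\tfrac{1-\theta}{2}+\tfrac{\theta(n-2)}{2n}$, one gets $\|u(\cdot,\tau)\|_{L^{2(n+2)/n}}^{2(n+2)/n}\le\|u(\cdot,\tau)\|_{L^2}^{4/n}\|u(\cdot,\tau)\|_{L^{2n/(n-2)}}^{2}$. Combining this with the previous Sobolev bound yields the pointwise-in-time Gagliardo-Nirenberg inequality
\[
\int_{\Gamma_R}u^{\frac{2(n+2)}{n}}(\cdot,\tau)\;\le\;C\Bigl(\int_{\Gamma_R}u^{2}(\cdot,\tau)\Bigr)^{\!2/n}\int_{\Gamma_R}\bigl(|\nabla u|^{2}+u^{2}\bigr)(\cdot,\tau).
\]

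It remains to integrate in $\tau$ and pass the spatial $L^{2}$-factor outside. Setting $A(\tau):=\int_{\Gamma_R}u^{2}(\cdot,\tau)$ and $B(\tau):=\int_{\Gamma_R}(|\nabla u|^{2}+u^{2})(\cdot,\tau)$, the previous display integrates to $\int_{Q(R)}u^{2(n+2)/n}\le C\int_{-R^2}^0 A^{2/n}B\,d\tau$. The main obstacle is the final exchange: Hölder's inequality in $\tau$ most naturally pulls out $\sup_\tau A(\tau)^{2/n}$ rather than $(\int A)^{2/n}$, producing the standard parabolic Sobolev inequality but with a time-supremum on the right instead of a spacetime integral. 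Converting this $L^\infty_\tau$-norm into the claimed $L^1_{\tau,x}$-norm requires an additional input; in the intended application within Theorem~A.1 it is supplied by the parabolic Caccioppoli/energy estimate already established earlier in that proof, which bounds $\sup_\tau A(\tau)\le C\int_{Q(R)}u^{2}$ with the same cut-off arrangement. Feeding this back into the Hölder estimate closes the argument.
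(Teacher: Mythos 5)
You take a genuinely different route from the paper. Where the paper opens with the H\"older estimate $\int_Q u^{2(n+2)/n}\leq\bigl(\int_Q u^2\bigr)^{1/n}\bigl(\int_Q u^{2(n+1)/(n-1)}\bigr)^{(n-1)/n}$ and then invokes the Michael--Simon inequality (applied to $v=u^{2(n+1)/n}$) followed by one more H\"older step, you instead build a slice-wise Gagliardo--Nirenberg inequality on each time section $\Gamma_R\times\{\tau\}$ out of the $H^1\hookrightarrow L^{2n/(n-2)}$ Sobolev embedding (so $n\geq 3$) composed with $L^2$--$L^{2n/(n-2)}$ interpolation. Both routes are legitimate and arrive at the same slice-wise bound
\begin{equation*}
\int_{\Gamma_R}u^{\frac{2(n+2)}{n}}(\cdot,\tau)\;\leq\;C\Bigl(\int_{\Gamma_R}u^2(\cdot,\tau)\Bigr)^{2/n}\int_{\Gamma_R}\bigl(|\nabla u|^2+u^2\bigr)(\cdot,\tau).
\end{equation*}

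Your diagnosis of the final step is correct, and the obstruction is not special to your approach. Integrating the slice-wise estimate in $\tau$ yields $\bigl(\sup_\tau\int_{\Gamma_R}u^2\bigr)^{2/n}\int_Q(|\nabla u|^2+u^2)$, and one cannot in general replace the $\sup_\tau$ by the spacetime integral: with $\nabla$ read as the spatial gradient, the test function $u(x,\tau)=\phi(x)\psi_\varepsilon(\tau)$, with $\psi_\varepsilon$ supported in $[-\varepsilon,0]$, makes the stated right-hand side carry a spurious factor $\varepsilon^{2/n}$ which vanishes as $\varepsilon\to 0$. The paper's Michael--Simon step runs into the same issue: on each slice one only gets $\bigl(\int_{\Gamma_R}v^{n/(n-1)}\bigr)^{(n-1)/n}\leq C\int_{\Gamma_R}(|\nabla v|+v)$, and integrating the resulting slice-wise bound $\int_{\Gamma_R}v^{n/(n-1)}\leq C\bigl(\int_{\Gamma_R}(|\nabla v|+v)\bigr)^{n/(n-1)}$ in time again needs control of $\sup_\tau$ of the right-hand slice quantity before it can be collapsed to the claimed $\bigl(\int_Q u^{(n+2)/n}|Du|+u^{2(n+1)/n}\bigr)^{(n-1)/n}$. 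So your reading, that the missing $\sup_\tau$ must be supplied by the parabolic Caccioppoli estimate inside the Moser iteration of Theorem~A.1, is the right one; the only imprecision in your write-up is that the energy estimate produces $\sup_\tau\int_{\Gamma_R}u^2\leq C\int_{Q(2R)}u^2$ on the larger cylinder (with the cutoff), not the same-domain bound $\sup_\tau\int_{\Gamma_R}u^2\leq C\int_{Q(R)}u^2$, which is precisely the scale-shrinking already built into the iteration.
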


\begin{proof}
The H\"older inequality yields
\begin{equation}
\int_Q u^{\frac{2(n+2)}{n}} \leq \bigg(\int_Q u^2\bigg)^{\frac{1}{n}}\bigg(\int_Q u^{\frac{2(n+1)}{n-1}}\bigg)^{\frac{n-1}{n}}.
\end{equation}
Applying the Michael-Simon inequality this implies
\begin{equation}
\int_Q u^{\frac{2(n+2)}{n}} \leq C\bigg(\int_Q u^2\bigg)^{\frac{1}{n}}\bigg(\int_Q u^{\frac{n+2}{n}}|Du|+u^{\frac{2(n+1)}{n}}\bigg)^{\frac{n-1}{n}}
\end{equation}
Using again H\"older's inequality we obtain the desired result.
\end{proof}

\bigskip

\section{Merle-Zaag ODE-lemma}

We recall the following variant of the Merle-Zaag ODE-lemma \cite[Lemma A.1]{MZ}, which has been proved in \cite[Lemma B.1]{ChoiMant}:

\begin{lemma} \label{mz.ode}
 $U_0$, $U_+$, $U_- : (-\infty, 0] \to \mathbb{R}$ are absolutely continuous non-negative functions satisfying $U_0+U_++U_->0$ and
	\begin{equation} 
		\liminf_{s \to -\infty} U_{-}(s) = 0.
	\end{equation}
Suppose that there exist some constant $c_0>0$ and positive increasing function $\sigma:(-\infty,0]\to \mathbb{R}$ such that $\lim_{s\to -\infty}\sigma(s)=0$ and the following hold
	\begin{align}
				|U_0'| &\leq \sigma (U_0 + U_- + U_+), \\
			U_-'  &\leq -c_0U_-+\sigma (U_0 + U_+), \\
			U_+' &\geq c_0U_+ - \sigma(U_0 + U_-).
\end{align}			
Then, there exists $c=c(c_0) > 0$ and $\delta=\delta(c_0)>0$ such that if $\sigma(s_0)<\delta$ then   
	\begin{equation} 
		U_- \leq c\sigma (U_0 + U_+) \text{ on } (-\infty, s_0],
	\end{equation}
and either
	\begin{equation} 
		U_+ \leq o(U_0),
	\end{equation}
	or 
	\begin{equation} 
	U_0 \leq c \sigma U_+ 
	\end{equation}
for all $s\leq s_0$.
\end{lemma}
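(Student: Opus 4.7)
\smallskip

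\textbf{Proof proposal.} The argument splits naturally into two parts, mirroring the classical Merle--Zaag scheme: first control the stable mode $U_-$ by $\sigma$ times the other two, then derive the unstable/neutral dichotomy for the reduced two-mode system.

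\emph{Step 1: Taming the stable mode.} The plan is to show $U_- \le c\sigma\,(U_0+U_+)$ on $(-\infty,s_0]$ for a suitable $c=c(c_0)$, assuming $\sigma(s_0)$ is small. Multiplying the inequality $U_-' \le -c_0 U_- + \sigma(U_0+U_+)$ by the integrating factor $e^{c_0 s}$ yields
\begin{equation*}
\bigl(e^{c_0 s}U_-\bigr)' \le e^{c_0 s}\sigma(s)\bigl(U_0(s)+U_+(s)\bigr).
\end{equation*}
I pick a sequence $s_k \to -\infty$ along which $U_-(s_k)\to 0$ (available by hypothesis) and integrate from $s_k$ up to $s_1 \le s_0$. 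The boundary term $e^{c_0(s_k-s_1)}U_-(s_k)$ vanishes in the limit, so
\begin{equation*}
U_-(s_1) \le \int_{-\infty}^{s_1} e^{-c_0(s_1-s)}\sigma(s)\bigl(U_0(s)+U_+(s)\bigr)\,ds.
\end{equation*}
To close this, I need to show the integrand does not blow up going backwards. Using $|U_0'|\le \sigma Z$ and $U_+' \ge c_0 U_+-\sigma Z$, where $Z=U_0+U_++U_-$, together with a standard Gr\"onwall-type argument, I get that the backward growth of $V:=U_0+U_+$ is controlled: for $s\le s_1$, $V(s)\le e^{K(s_1-s)}V(s_1)$ for some $K$ depending on $c_0$. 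Plugging this in and using that $\sigma$ is increasing (so $\sigma(s)\le\sigma(s_1)$ for $s\le s_1$), the integral is bounded by $c\sigma(s_1) V(s_1)$, proving the claim. The backward growth bound on $V$ requires ruling out the pathological possibility that $U_+$ is enormous in the past and relaxes forward: this is exactly where the lower bound $U_+'\ge c_0 U_+ - \sigma Z$ is essential, since it forces $U_+$ to be essentially non-increasing backwards whenever it dominates $\sigma Z$.

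\emph{Step 2: The unstable/neutral dichotomy.} With $U_-\le c\sigma V$ from Step 1 in hand, the system reduces to
\begin{equation*}
|U_0'|\le C\sigma(U_0+U_+),\qquad U_+' \ge c_0 U_+ - C\sigma(U_0+U_+),
\end{equation*}
with a modified constant $C=C(c_0)$. I introduce the ratio $R(s)=U_0(s)/U_+(s)$, valid wherever $U_+>0$; the condition $U_0+U_++U_->0$ combined with Step~1 guarantees that once $U_+$ becomes positive it remains positive backward (otherwise $V\equiv 0$ forces $U_-\equiv 0$, contradicting $Z>0$). A direct computation yields
\begin{equation*}
R'\le \frac{C\sigma(U_0+U_+)}{U_+} - \frac{U_0}{U_+^2}\bigl(c_0 U_+-C\sigma(U_0+U_+)\bigr) \le -c_0 R + C'\sigma(1+R)^2.
\end{equation*}
This ODE inequality has the qualitative feature that the level set $\{R=K\sigma\}$ is ``repelling backwards'' once $K$ is large and $\sigma$ is small: if $R(s_\ast)\ge K\sigma(s_\ast)$ at some $s_\ast\le s_0$, then $R'(s_\ast)<0$, i.e.\ $R$ is strictly larger moving into the past. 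By a continuation argument, either $R(s)\le K\sigma(s)$ for all $s\le s_0$ — this is the conclusion $U_0\le c\sigma U_+$ — or there is some $s_\ast$ where $R$ exceeds $K\sigma$, and then $R(s)\to\infty$ as $s\to-\infty$, i.e.\ $U_+=o(U_0)$.

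\emph{Main obstacle.} The subtle step is Step~1: the hypotheses give only a one-sided lower bound on $U_+'$ and one-sided upper bound on $U_-'$, so the backward growth control on $V$ has to be extracted carefully from a coupled analysis rather than from a direct Gr\"onwall bound on a single function. I expect to handle the degenerate locus $\{V=0\}$ separately using the nondegeneracy $Z>0$ and Step~1's output $U_-\le c\sigma V$, which together exclude $V$ vanishing for $\sigma$ small. The rest of the argument is a clean continuation/shooting argument on the scalar ratio $R$.
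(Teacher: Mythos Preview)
The paper does not prove this lemma; it is simply quoted from the literature (Merle--Zaag \cite[Lemma A.1]{MZ} and the variant in Choi--Mantoulidis \cite[Lemma B.1]{ChoiMant}). So there is no in-paper argument to compare against, and the question is whether your sketch stands on its own.

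Your Step~1 contains a genuine circularity that you flag but do not resolve. To bound the integral $\int_{-\infty}^{s_1}e^{-c_0(s_1-s)}\sigma V\,ds$ by $c\,\sigma(s_1)V(s_1)$ you need the backward growth control $V(s)\le e^{K(s_1-s)}V(s_1)$ with some $K<c_0$. But the only lower bound the hypotheses give on $V'=U_0'+U_+'$ is $V'\ge -2\sigma(V+U_-)$, which still involves the unknown $U_-$; there is no two-sided control on $U_-'$, so you cannot close a Gr\"onwall inequality for $V$ (or for $Z$) without already knowing $U_-\lesssim V$. The standard argument avoids the integral representation at this stage: one first runs a barrier $W=U_--A(U_0+U_+)$ with a \emph{fixed} constant $A$ (not $A\sigma$), checks that on $\{W>0\}$ one has $W'<0$ and $U_-'\le -(c_0/2)U_-$, and uses the resulting backward exponential growth of $U_-$ to contradict $\liminf U_-=0$. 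This gives $U_-\le AV$, hence $Z\le (1+A)V$, and only then does your Gr\"onwall bound on $V$ close, after which a second barrier (or now your integral formula) yields the sharp $U_-\le c\sigma V$.

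Your Step~2 also has a gap in the neutral branch. Backward-invariance of $\{R\ge K\sigma\}$ does not imply $R\to\infty$: since $K\sigma(s)\to 0$, this does not even prevent $R\to 0$, and your inequality $R'\le -c_0R+C'\sigma(1+R)^2$ is only an \emph{upper} bound on $R'$, giving no obstruction to $R$ hovering just above $K\sigma$ indefinitely. The usual fix is to run, for each fixed $\lambda>0$, a barrier on $\{U_+=\lambda U_0\}$ once $\sigma$ is small enough (depending on $\lambda$), showing that if $U_+/U_0$ ever drops below $\lambda$ it stays below; letting $\lambda\downarrow 0$ then gives $U_+=o(U_0)$.
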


\bigskip

\bibliography{wing}

\bibliographystyle{alpha}

\vspace{10mm}

{\sc Kyeongsu Choi, School of Mathematics, Korea Institute for Advanced Study, 85 Hoegiro, Dongdaemun-gu, Seoul, 02455, South Korea}\\

{\sc Robert Haslhofer, Department of Mathematics, University of Toronto,  40 St George Street, Toronto, ON M5S 2E4, Canada}\\

{\sc Or Hershkovits, Institute of Mathematics, Hebrew University, Givat Ram, Jerusalem, 91904, Israel}\\

\emph{E-mail:} choiks@kias.re.kr, roberth@math.toronto.edu, or.hershkovits@mail.huji.ac.il

\end{document}